\documentclass[a4paper,10pt]{article}
\usepackage{authblk}
\usepackage{amsthm}
\usepackage{booktabs}
\usepackage{comment}
\usepackage{nicefrac}
\usepackage{multirow}
\usepackage{breakurl}
\usepackage{cancel}
\usepackage[hmargin={28mm,28mm},vmargin={30mm,35mm}]{geometry}
\usepackage[ocgcolorlinks,allcolors={blue}]{hyperref}
\usepackage[latin1]{inputenc}
\usepackage{newtxtext,newtxmath}
\usepackage{nicefrac}
\usepackage{subcaption}
\usepackage[compact]{titlesec}
\usepackage{xcolor}


\newcommand{\email}[1]{\href{mailto:#1}{#1}}

\newtheorem{theorem}{Theorem}
\newtheorem{proposition}[theorem]{Proposition}
\newtheorem{lemma}[theorem]{Lemma}

\newtheorem{problem}{Problem}

\theoremstyle{remark}
\newtheorem{remark}[theorem]{Remark}
\theoremstyle{definition}
\newtheorem{assumption}{Assumption}

\newcommand{\bvec}[1]{\boldsymbol{#1}}
\newcommand{\uvec}[1]{\underline{\bvec{#1}}}
\newcommand{\btens}[1]{\boldsymbol{#1}}

\DeclareMathOperator{\tr}{tr}

\newcommand{\GRAD}{\bvec{\nabla}}
\newcommand{\GRADs}{\GRAD_\symm}
\newcommand{\GRADss}{\GRAD_{\rm ss}}
\newcommand{\DIV}{\bvec{\nabla}{\cdot}}
\newcommand{\CURL}{\bvec{\nabla}\times}
\newcommand{\LAPL}{\bvec{\Delta}}
\newcommand{\GRADh}{\GRAD_h}
\newcommand{\GRADsh}{\GRAD_{\symm,h}}

\newcommand{\ud}{\,\mathrm{d}}

\newcommand{\term}{\mathfrak{T}}

\newcommand{\st}{\; : \;}
\newcommand{\Id}[1][d]{\bvec{I}_{#1}}

\newcommand{\norm}[2][]{\|#2\|_{#1}}
\newcommand{\seminorm}[2][]{|#2|_{#1}}

\newcommand{\abs}[1]{\mathsf{#1}}

\newcommand{\symm}{{\rm s}}

\newcommand{\diff}{\btens{K}}

\newcommand{\Real}{\mathbb{R}}

\newcommand{\Integer}{\mathbb{Z}}


\newcommand{\Poly}[2][]{\mathbb{P}_{#1}^{#2}}
\newcommand{\Polys}[1]{\btens{\mathbb{P}}_{\symm}^{#1}}


\newcommand{\Mh}[1][h]{\mathcal{M}_{#1}}
\newcommand{\Th}[1][h]{\mathcal{T}_{#1}}
\newcommand{\Fh}[1][h]{\mathcal{F}_{#1}}
\newcommand{\Fhi}{\Fh^{{\rm i}}}
\newcommand{\Fhb}{\Fh^{{\rm b}}}
\newcommand{\normal}{\bvec{n}}

\newcommand{\tF}{t_{\rm F}}
\newcommand{\dt}{d_t}

\newcommand{\lproj}[2][h]{\pi_{#1}^{#2}}
\newcommand{\vlproj}[2][h]{\bvec{\pi}_{#1}^{#2}}

\newcommand{\jump}[1]{[#1]_F}
\newcommand{\wavg}[1]{\{#1\}_{\omega,F}}

\newcommand{\ET}[1][k]{\btens{\mathrm{E}}_T^{#1}}
\newcommand{\rT}[1][k+1]{\bvec{\mathrm{r}}_T^{#1}}
\newcommand{\Eh}[1][k]{\btens{\mathrm{E}}_h^{#1}}
\newcommand{\Dh}[1][k]{\mathrm{D}_h^{#1}}
\newcommand{\rh}[1][k+1]{\bvec{\mathrm{r}}_h^{#1}}
\newcommand{\pT}[1][k+1]{\mathrm{p}_T^{#1}}
\newcommand{\ph}[1][k+1]{\mathrm{p}_h^{#1}}

\newcommand{\strain}{\btens{\varepsilon}}

\newcommand{\hho}{\mathrm{hho}}
\newcommand{\dg}{\mathrm{dg}}


\begin{document}

\title{An abstract analysis framework for monolithic discretisations of poroelasticity with application to Hybrid High-Order methods}
\author[1]{Lorenzo Botti\footnote{\email{lorenzo.botti@unibg.it}}}
\author[2]{Michele Botti\footnote{\email{michele.botti@polimi.it}}}
\author[3]{Daniele A. Di Pietro \footnote{\email{daniele.di-pietro@umontpellier.fr}}}
\affil[1]{Department of Engineering and Applied Sciences, University of Bergamo, Italy}
\affil[2]{MOX, Department of Mathematics, Politecnico di Milano, Milano, Italy}
\affil[3]{IMAG, Univ Montpellier, CNRS, Montpellier, France}

\maketitle

\begin{abstract}
  In this work, we introduce a novel abstract framework for the stability and convergence analysis of fully coupled discretisations of the poroelasticity problem and apply it to the analysis of Hybrid High-Order (HHO) schemes.
  A relevant feature of the proposed framework is that it rests on mild time regularity assumptions that can be derived from an appropriate weak formulation of the continuous problem.
  To the best of our knowledge, these regularity results for the Biot problem are new.
  A novel family of HHO discretisation schemes is proposed and analysed, and their performance numerically evaluated.
  \medskip\\
  {\bf Key words.} Poroelasticity, Biot problem, Hybrid High-Order methods, polyhedral meshes, abstract analysis framework
  \medskip\\
  {\bf AMS subject classification.} 65N08, 65N30, 76S05
\end{abstract}


\section{Introduction}

The numerical modeling of poroelastic media is relevant in several applications in geosciences, including subsidence due to fluid withdrawal, reservoir impoundment, tensile failure induced by pressurization of a borehole, waste disposal, earthquake triggering due to pressure-induced faults slip, injection-production cycles in geothermal fields, and carbon dioxide storage in saline aquifers.
The interest in the coupled diffusion-deformation mechanisms was initially motivated by the problem of consolidation, namely the progressive settlement of a soil due to fluid extraction.
The earliest theory modeling the effects of the pore fluid on the deformation of the soil was developed in the pioneering work of Terzaghi \cite{Terzaghi:23}, who proposed a model for consolidation accounting for the fluid-to-solid interaction only. In this case, the problem can be decoupled and solved in two stages. This kind of theory can successfully model some of the poroelastic processes in the case of highly compressible fluids such as air. However, when dealing with slightly compressible (or incompressible) fluids, the solid-to-fluid interaction cannot be neglected, since the changes in the stress can significantly influence the pore pressure. The first detailed mathematical theory of poroelasticity incorporating two way interactions was formulated by Biot \cite{Biot:41}.
The model proposed by Biot was subsequently re-derived via homogenization \cite{Auriault.Sanchez-Palencia:77,Burridge.Keller:81} and mixture theory \cite{Bowen:80,Bowen:82}, which placed it on a rigorous basis.

Several space discretisation methods for the Biot problem have been considered in the literature.
Finite element discretisations are discussed, e.g., in the monograph \cite{Lewis.Schrefler:98}; cf. also references therein.
A finite volume discretisation for the three-dimensional Biot problem with discontinuous physical coefficients is considered in \cite{Naumovich:06}.
In \cite{Phillips.Wheeler:07,Phillips.Wheeler:07*1}, an algorithm that models displacements with continuous elements and the flow with a mixed method is proposed.
In \cite{Phillips.Wheeler:08}, the same authors study a different method, where displacements are approximated using discontinuous Galerkin (DG) methods.
The coupling of a multipoint flux discretisation of the flow with a DG discretisation of the mechanical term is studied in \cite{Wheeler.Xue.ea:14}.
More recently, stable cell centered finite volume discretisations have been considered in \cite{Nordbotten:16}.
Concerning the Hybrid High-Order (HHO) literature, we can cite the original method of \cite{Boffi.Botti.ea:16}, relying on a discretisation of the displacements in the HHO space of degree $k\ge 1$ and of the pore pressure in the space of broken polynomials of total degree $\le k$, as well as its extension to nonlinear strain--stress laws considered in \cite{Botti.Di-Pietro.ea:19} (based in turn on the method of \cite{Botti.Di-Pietro.ea:17} for nonlinear elasticity).
We also cite here the recent work \cite{Botti.Di-Pietro.ea:20} on the extension to random coefficients.

The contribution of the present work is threefold:
first, we prove novel regularity estimates for the solution of a weak formulation of the continuous Biot problem;
second, in the spirit of \cite{Di-Pietro.Droniou:18}, we introduce a novel abstract framework for the stability and convergence analysis of fully coupled discretisation schemes that enables one to exploit the aforementioned regularity;
third, we apply the abstract framework to the analysis of both existing and novel schemes based on HHO discretisations of the mechanical term.
Specifically, we introduce a novel family of schemes where HHO methods are used to discretise both the mechanical and flow terms.
These HHO-HHO discretisations display some relevant advantages in addition to the properties of the original HHO-DG scheme of \cite{Boffi.Botti.ea:16}:
they enable the use of piecewise constant approximations of the displacements and the pore pressure, leading to schemes that lend themselves to large three-dimensional industrial simulations;
when polynomials of total degree $\le k$ are used as discrete unknowns, convergence in $h^{k+1}$ (with $h$ denoting, as usual, the meshsize) can be proved for the energy norm of the error without requiring elliptic regularity (that is, for non convex domains and/or heterogeneous permeabilities);
the implementation can benefit from static condensation in the spirit of \cite[Section 6]{Di-Pietro.Ern.ea:16*1} to locally eliminate a large subset of both displacement and pressure unknowns.
We notice, in passing, that the error estimates derived here for the HHO-DG method are different from the ones originally proved in \cite{Boffi.Botti.ea:16} and hold under milder time regularity requirements.

The rest of this paper is organised as follows.
In Section \ref{sec:continuous.setting} we establish the continuous setting, state a weak formulation of the Biot problem, and derive novel regularity results for its solution.
Section \ref{sec:abstract.framework} contains the abstract analysis framework, including the assumptions on the abstract scheme as well as its stability and convergence analysis.
In Section \ref{sec:discrete.setting} we introduce the discrete setting, while in Section \ref{sec:hho} we formulate the HHO schemes. Their convergence analysis based on the abstract framework of Section \ref{sec:abstract.framework} is carried out in Section \ref{sec:error.analysis}.
Finally, some numerical examples are presented in Section \ref{sec:numerical.examples} to corroborate the theoretical results.


\section{Continuous setting}\label{sec:continuous.setting}

In this section we state the Biot problem and its weak formulation, and discuss the regularity of the solution.

\subsection{Continuous problem}

Let $\Omega\subset\Real^d$, $d\in\{2,3\}$, denote a bounded connected polyhedral domain with boundary $\partial\Omega$ and outward normal $\normal$.
For a given observation time $\tF>0$, volumetric load $\bvec{f}:\Omega\times(0,\tF)\to\Real^d$, and
fluid source $g:\Omega\times(0,\tF)\to\Real$,
we consider the poroelasticity problem that consists in finding a 
vector-valued displacement field $\bvec{u}:\Omega\times\lbrack 0,\tF)\to\Real^d$ and a 
scalar-valued pore pressure field $p:\Omega\times\lbrack 0,\tF)\to\Real$ such that
\begin{subequations}\label{eq:biot:strong}
  \begin{alignat}{2}
    \label{eq:biot:strong:mechanics}
    -\DIV\btens{\sigma}(\GRADs\bvec{u}) + C_{\rm bw}\GRAD p &= \bvec{f} &\qquad&\text{in $\Omega\times (0,\tF)$},
    \\
    \label{eq:biot:strong:flow}
    C_0\dt p + C_{\rm bw}\dt(\DIV\bvec{u}) - \DIV(\diff\GRAD p) &= g &\qquad&\text{in $\Omega\times (0,\tF)$}.
  \end{alignat}
  The quantities appearing in the above equations are defined as follows.
  Let $\Real^{d\times d}_\symm$ denote the set of symmetric $d\times d$ real-valued matrices.
  In \eqref{eq:biot:strong:mechanics}, $\GRADs$ denotes the symmetric part of the gradient operator acting on vector-valued fields, $\btens{\sigma}:\Real^{d\times d}_\symm\to\Real^{d\times d}_\symm$ is the linear strain-stress law such that, for a given uniformly elliptic fourth-order tensor-valued function $\btens{C}:\Omega\to\Real^{d^4}$,
  \begin{equation}\label{eq:C.lame}
    \btens{\sigma}(\btens{\tau}) = \btens{C}\btens{\tau}\qquad\forall\btens{\tau}\in\Real^{d\times d}_\symm,
  \end{equation}
  and $C_{\rm bw}>0$ is the Biot--Willis coefficient.
  For homogeneous isotropic materials, the tensor $\btens{C}$ can be expressed in terms of the Lam\'e coefficients $\mu>0$ and $\lambda$ such that $2\mu+d\lambda\ge 0$:
  For all $\btens{\tau}\in\Real_\symm^{d\times d}$,
  \begin{equation}\label{eq:def_tensC}
    \btens{C}\btens{\tau} = 2\mu\ \btens{\tau} + \lambda\ {\rm tr}(\btens{\tau})\btens{I}_d 
    = 2\mu\ {\rm \textbf{dev}}(\btens{\tau}) + \frac{2\mu+d\lambda}d\ {\rm tr}(\btens{\tau})\btens{I}_d,
  \end{equation}
  where $\btens{I}_d$ denotes the identity matrix of $\Real^{d\times d}$, 
  ${\rm tr}(\btens{\tau}) \coloneq \sum_i^d \tau_{ii}$, and 
  ${\rm \textbf{dev}}(\btens{\tau})\coloneq \btens{\tau} - d^{-1}{\rm tr}(\btens{\tau})\Id$.
  In \eqref{eq:biot:strong:flow}, $\dt$ denotes the time derivative, $C_0\ge 0$ is the constrained specific storage 
  coefficient, while $\diff:\Omega\to\Real^{d\times d}_\symm$ is the uniformly elliptic hydraulic mobility tensor field 
  which, for strictly positive real numbers $0<\underline{K}\le\overline{K}$, satisfies 
  \begin{equation}\label{eq:ass_diff}
    \text{
      $\underline{K}\seminorm{\bvec{\xi}}^2\le\diff(\bvec{x})\bvec{\xi}\cdot\bvec{\xi}\le\overline{K}\seminorm{\bvec{\xi}}^2$
      for almost every (a.e.) $\bvec{x}\in\Omega$ and all $\bvec{\xi}\in\Real^d$.
    }
  \end{equation}
  In the poroelasticity theory \cite{Coussy:04}, the medium is modeled as a continuous superposition of solid and fluid phases.
  Following \cite{Terzaghi:43}, the mechanical equilibrium equation~\eqref{eq:biot:strong:mechanics} is based on the decomposition  of the total stress tensor into a mechanical contribution and a pore pressure contribution.
  The mass conservation equation~\eqref{eq:biot:strong:flow} is derived for fully saturated porous media assuming Darcean flow. The first two terms of this equation quantify the variation of fluid content in the pores. The dimensionless coupling coefficient $C_{\rm bw}$ expresses the amount of fluid that can be forced into the medium by a variation of the pore volume at a constant fluid pressure, while $C_0$ measures the amount of fluid that can be forced into the medium by pressure increments due to compressibility of the structure. The case of a solid matrix with incompressible grains corresponds to the limit value $C_0=0$.

  To close the problem, we enforce homogeneous boundary conditions corresponding to a clamped, impermeable boundary:
  \begin{align}
    \label{eq:biot:strong:bc.u}
    \bvec{u} &= \bvec{0} \qquad\text{on $\partial\Omega\times(0,\tF)$},
    \\
    \label{eq:biot:strong:bc.p}
    (\diff\GRAD p)\cdot\normal &= 0 \qquad\text{on $\partial\Omega\times(0,\tF)$},
  \end{align}
  as well as the following initial condition which prescribes the initial fluid content:
  \begin{equation}\label{eq:biot:strong:initial}
    C_0 p(\cdot,0)+C_{\rm bw}\DIV\bvec{u}(\cdot,0)=\phi^0(\cdot). 
  \end{equation}
  In the case $C_0=0$, we also need the following compatibility conditions on $g$ and $\phi^0$ and zero mean value constraint on $p$:
  \begin{equation}\label{eq:compatibility}
    \int_\Omega \phi^0(\bvec{x})\ud\bvec{x} = 0,\qquad
    \int_\Omega g(\bvec{x},t)\ud\bvec{x} = 0\quad\forall t\in(0,\tF),\qquad
    \int_\Omega p(\bvec{x},t)\ud\bvec{x} = 0\quad\forall t\in\lbrack0,\tF).
  \end{equation}
\end{subequations}
For the sake of simplicity, throughout the rest of the paper we make the usual assumption
\[
C_{\rm bw}=1.
\]

\subsection{Weak formulation}

We next discuss a weak formulation of problem \eqref{eq:biot:strong}.
For any measured set $X$, integer $m\in\Integer$, and vector space $V$, we denote by $H^m(X;V)$ the usual Sobolev space of $V$-valued functions that have weak partial derivatives of order up to $m$ in $L^2(X;V)$, with the convention that $H^0(X;V)\coloneq L^2(X;V)$.
$C^m(X;V)$ and $C_{\rm c}^\infty(X;V)$ denote, respectively, the usual spaces of $m$-times continuously differentiable $V$-valued functions and infinitely continuously differentiable $V$-valued functions with compact support on $X$.
For the sake of conciseness, we adopt the convention that the codomain is omitted when $V=\Real$, and we simply write $H^m(X)$, $L^2(X)$, $C^m(X)$, and $C_{\rm c}^m(X)$.
Denoting by $(\cdot,\cdot)_V$ and $\norm[V]{{\cdot}}$ the inner product and induced norm of $V$, the space $C^m([0,\tF];V)$ is a Banach space when equipped with the norm
$$
\norm[{C^{m}([0,\tF];V)}]{\varphi}\coloneq\max_{0\le i\le m}\max_{t\in[0,\tF]}\norm[V]{\dt^{(i)}\varphi(t)},
$$
where $\dt^{(i)}$ denotes the $i$th time derivative.
The Hilbert space $H^m(0,\tF;V)$ is equipped with the norm $\norm[H^m(0,\tF;V)]{{\cdot}}$ induced by the scalar product
$$
(\varphi,\psi)_{H^{m}(0,\tF;V)}
\coloneq\sum_{j=0}^m \int_0^{\tF} (\dt^{(j)}\varphi(t),\dt^{(j)}\psi(t))_V \ud t
\qquad \forall\varphi,\psi \in H^{m}(V).
$$

At a given time $t$, the natural functional spaces for the displacement $\bvec{u}(t)$ and pore pressure $p(t)$ taking into account the boundary condition \eqref{eq:biot:strong:bc.u} and, if $C_0=0$, the zero-mean value constraint \eqref{eq:compatibility} are, respectively, 
$$
\bvec{U}\coloneq H^1_0(\Omega)^d
\qquad\text{and}\qquad
P\coloneq\begin{cases}
H^1(\Omega) & \text{if $C_0>0$,} \\ 
H^1(\Omega)\cap L^2_0(\Omega)  & \text{if $C_0=0$,}
\end{cases}
$$
where $H^1_0(\Omega)$ is spanned by functions in $H^1(\Omega)$ that vanish on $\partial\Omega$ in the sense of traces and $L^2_0(\Omega)$ by functions in $L^2(\Omega)$ with zero mean value over $\Omega$.

Assume $\bvec{f}\in L^2(0,\tF; L^2(\Omega)^d)$ and $g\in L^2(0,\tF;L^2(\Omega))$ verifying \eqref{eq:compatibility} if $C_0=0$, and $\phi^0\in L^2(\Omega)$.
We consider the following weak formulation of problem~\eqref{eq:biot:strong}:
Find $\bvec{u}\in L^2(0,\tF;\bvec{U})$ and $p\in L^2(0,\tF;P)$ such that, for all $\bvec{v}\in\bvec{U}$, all $q\in P$, all $r\in L^2(\Omega)$, and all $\varphi,\psi\in C_{\rm c}^\infty((0,\tF))$,
\begin{subequations}
  \label{eq:weak_form}
  \begin{align}
    \label{eq:weak_form.mech}
    \int_0^{\tF}
    \left[
      a(\bvec{u}(t),\bvec{v}) + b(\bvec{v},p(t))
      \right]
    \varphi(t)\ud t
    &=\int_0^{\tF}\hspace{-1mm}(\bvec{f}(t),\bvec{v})_{L^2(\Omega)^d}\,\varphi(t) \ud t,
    \\
    \label{eq:weak_form.fluid}    
    \int_0^{\tF}\left[
      \left(
      -(C_0 p(t),q)_{L^2(\Omega)} + b(\bvec{u}(t),q)
      \right)\dt\psi(t)      
      + c(p(t),q)\psi(t)
      \right]\ud t 
    &=\int_0^{\tF}\hspace{-1mm}(g(t),q)_{L^2(\Omega)}\,\psi(t) \ud t, 
    \\
    \label{eq:weak_form.initial}
    (C_0 p(0) + \DIV\bvec{u}(0),r)_{L^2(\Omega)} &= (\phi^0,r)_{L^2(\Omega)},
  \end{align}
\end{subequations}
with bilinear forms $a:\bvec{U}\times\bvec{U}\to\Real$, $b:\bvec{U}\times P\to\Real$, and $c:P\times P\to\Real$ such that
\begin{equation}\label{eq:a.b.c}
  a(\bvec{w},\bvec{v}) \coloneq (\btens{C}\GRADs\bvec{w},\GRADs\bvec{v})_{L^2(\Omega)^{d\times d}},\qquad
  b(\bvec{v},q) \coloneq -(\DIV\bvec{v},q)_{L^2(\Omega)},\qquad
  c(r,q) \coloneq (\diff\GRAD r, \GRAD q)_{L^2(\Omega)^d}.
\end{equation}

\subsection{Regularity}
In this section, we investigate the regularity of the solution to problem \eqref{eq:weak_form}. 
Specifically, under mild assumptions on the problem data and initial pressure, we prove that $\bvec{u}\in H^1(0,\tF;\bvec{U})$ and $p\in C^0([0,\tF];P)\cap H^1(0,\tF;L^2(\Omega))$. For the sake of brevity, in this section we will often use the notation $a\lesssim b$ for the inequality $a\le Cb$ with generic constant $C>0$ independent of $\mu$, $\lambda$, $C_0$, and $\diff$, but possibly depending on $d$ and $\Omega$.

\subsubsection{Preliminary results}

We start by recalling two fundamental results that will be needed in the discussion.
The following generalised Korn inequality is proved in \cite[Theorem 2]{Reshetnyak:70} and \cite[Theorem 2.3]{Schirra:12}:
  There exists a positive constant $C_{\rm K}$, depending on $\Omega$ and $d$, such that
  \begin{equation}\label{eq:korn_gen} 
  \norm[H^1(\Omega)^d]{\bvec{v}} \le C_{\rm K} \norm[L^2(\Omega)^{d\times d}]{\mathbf{dev}(\GRADs\bvec{v})}
  \qquad\forall\bvec{v}\in \bvec{U}.
  \end{equation}
As a consequence of \eqref{eq:korn_gen} and of the definition of $\btens{C}$ given in \eqref{eq:def_tensC}, one has
\begin{equation}\label{eq:cv_bnd_StressStrain}
  2\mu\norm[H^1(\Omega)^d]{\bvec{v}}^2
  \le C_{\rm K}^2 a(\bvec{v},\bvec{v})
  \le (2\mu + d\lambda^+) C_{\rm K}^2\norm[H^1(\Omega)^d]{\bvec{v}}^2,
\end{equation} 
with $\lambda^+$ denoting the positive part of $\lambda$.
The second, classical result is the surjectivity of $\DIV:\bvec{U}\to L^2_0(\Omega)$ (see, e.g., \cite{Ladyzhenskaya:69} and \cite[Corollary 2.4]{Girault.Raviart:86}):
There exists $\tilde{\beta}>0$ only depending on $\Omega$ and $d$ such that
\begin{equation}\label{eq:inf-sup:continuous}
  \tilde{\beta}\norm[L^2(\Omega)]{q}\le 
  \sup_{\bvec{v}\in\bvec{U}, \norm[H^1(\Omega)^d]{\bvec{v}}=1} b(\bvec{v},q)
  \qquad\forall q\in L^2_0(\Omega).
\end{equation}

\subsubsection{Regularity estimate}
Assume $\bvec{f}\in H^1(0,\tF;L^2(\Omega)^d)$ and $\phi^0\in H^1(\Omega)$.
This additional regularity of the loading term $\bvec{f}$ allows to prescribe the mechanical equilibrium at the initial time $t=0$.
Specifically, observing that, for all $\bvec{v}\in \bvec{U}$, the function $t\mapsto(\bvec{f}(t),\bvec{v})_{L^2(\Omega)}$ belongs to $H^1{(0,\tF)\subset C^0([0,\tF])}$, we consider the initial solution $(\bvec{u}(0),p(0))\in\bvec{U}\times P$ solving the weak problem given by \eqref{eq:weak_form.initial} along with the mechanical equilibrium equation
\begin{equation}\label{eq:initial_mech}
  a(\bvec{u}(0),\bvec{v})
  + b(\bvec{v},p(0))
  = (\bvec{f}(0),\bvec{v})_{L^2(\Omega)^d}.
\end{equation}
We will additionally need the following assumption on the initial pressure:
\begin{equation}\label{eq:reg_p_initial}
\norm[L^2(\Omega)^d]{\GRAD p(0)} \lesssim \mathcal{C}
\left(\norm[L^2(\Omega)^d]{\bvec{f}(0)} + \norm[H^1(\Omega)]{\phi^0}\right),
\end{equation}
where the constant $\mathcal{C}$ possibly depends on the problem coefficients $\mu$, $\lambda$, and $C_0$.
In Section \ref{sec:regularity:p(0)}, we will discuss some sufficient assumptions on the problem data under which \eqref{eq:reg_p_initial} holds.
\begin{theorem}[Regularity estimate]
\label{thm:reg_est}
  Let $(\bvec{u},p)\in L^2(0,\tF;\bvec{U}\times P)$ solve problem \eqref{eq:weak_form} with 
  $\bvec{f}\in H^1(0,\tF;L^2(\Omega)^d)$ and $\phi^0\in H^1(\Omega)$. Assume, additionally, that \eqref{eq:reg_p_initial} 
  holds. Then,
  \[
  \text{
    $\bvec{u}\in H^1(0,\tF;\bvec{U})$ and $p\in C^0([0,\tF];P)\cap H^1(0,\tF;L^2(\Omega))$.
  }
  \]
\end{theorem}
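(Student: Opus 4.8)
The plan is to use the classical energy method for parabolic-type problems: differentiate the weak formulation formally in time, test with suitable combinations of $\dt\bvec{u}$ and $\dt p$, and derive a priori bounds on the time derivatives. Rigorously, this is carried out via a Galerkin approximation (or via difference quotients in time) so that the formal manipulations are justified; I will describe the formal computation, which contains all the essential content. First I would use the additional regularity $\bvec{f}\in H^1(0,\tF;L^2(\Omega)^d)$ and $\phi^0\in H^1(\Omega)$ together with \eqref{eq:weak_form.initial}, \eqref{eq:initial_mech} to make sense of the initial data $(\bvec{u}(0),p(0))\in\bvec{U}\times P$: the pair solves a well-posed stationary saddle-point problem whose solvability rests on the coercivity \eqref{eq:cv_bnd_StressStrain} of $a$ and the inf--sup condition \eqref{eq:inf-sup:continuous} for $b$ (when $C_0=0$; when $C_0>0$ the $(C_0 p,q)$ term provides the missing control on the pressure directly). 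This step also records the bound \eqref{eq:reg_p_initial} as the hypothesis controlling $\norm[L^2(\Omega)^d]{\GRAD p(0)}$.

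Next I would differentiate \eqref{eq:weak_form.mech} in time to get $a(\dt\bvec{u},\bvec{v}) + b(\bvec{v},\dt p) = (\dt\bvec{f},\bvec{v})_{L^2(\Omega)^d}$, and read \eqref{eq:weak_form.fluid} in its differentiated (strong-in-time) form $-(C_0\dt p,q)_{L^2(\Omega)} + b(\dt\bvec{u},q) + c(p,q) = -(g,q)_{L^2(\Omega)}$ — note the sign bookkeeping forced by the $\dt\psi$ in \eqref{eq:weak_form.fluid}, so one actually works with $\int_0^t$ of the flow equation or integrates by parts carefully. Then I test the mechanics equation with $\bvec{v}=\dt\bvec{u}(t)$ and the flow equation with $q=p(t)$ (or $q=\dt p$, depending on which derivative one is after) and add. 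The key algebraic cancellation is that $b(\dt\bvec{u},p) = -b(\bvec{v},\dt p)\big|_{\bvec{v}=\dt\bvec u}$... more precisely the two copies of the coupling term combine into a total time derivative $\dt\big[b(\bvec{u},p)\big]$ or cancel outright, leaving
\[
a(\dt\bvec{u}(t),\dt\bvec{u}(t)) + \tfrac12\dt\big[C_0\norm[L^2(\Omega)]{p(t)}^2\big] + c(p(t),p(t)) \lesssim (\dt\bvec{f}(t),\dt\bvec{u}(t))_{L^2(\Omega)^d} + (g(t),p(t))_{L^2(\Omega)},
\]
up to sign conventions. Using \eqref{eq:cv_bnd_StressStrain} to bound $a(\dt\bvec{u},\dt\bvec{u})$ from below by $2\mu\norm[H^1(\Omega)^d]{\dt\bvec u}^2$ (equivalently, by the generalised Korn inequality \eqref{eq:korn_gen} controlling the full $H^1$ norm through the deviatoric strain), Young's inequality on the right-hand side, and integrating in time from $0$ to $\tF$, gives $\bvec{u}\in H^1(0,\tF;\bvec{U})$ and, when $C_0>0$, $p\in L^\infty(0,\tF;L^2(\Omega))$, plus $p\in L^2(0,\tF;P)$ from the $c(p,p)$ term and a Poincaré/Korn-type argument for the case $C_0=0$.

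To upgrade to $p\in C^0([0,\tF];P)\cap H^1(0,\tF;L^2(\Omega))$ I would run a second energy estimate: test the time-differentiated flow equation with $q=\dt p(t)$ and the time-differentiated mechanics equation with $\bvec{v}=\dt\bvec{u}(t)$ again, so that the coupling terms cancel and one obtains control of $C_0\norm[L^2(\Omega)]{\dt p}^2 + \tfrac12\dt\big[c(p,p)\big]$; integrating yields $c(p(\tF),p(\tF))$ bounded, hence (together with the mean-value constraint or the $C_0$-mass term) $p\in L^\infty(0,\tF;P)$, and $\dt p\in L^2(0,\tF;L^2(\Omega))$ when $C_0>0$. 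The case $C_0=0$ is the delicate one: there is no $\norm{\dt p}^2$ coercive term, so control of $\dt p$ must come indirectly — from the inf--sup condition \eqref{eq:inf-sup:continuous} applied to the mechanics equation $b(\bvec{v},\dt p) = (\dt\bvec{f},\bvec{v})_{L^2(\Omega)^d} - a(\dt\bvec{u},\bvec{v})$, which bounds $\norm[L^2(\Omega)]{\dt p}$ by $\norm[L^2(\Omega)^d]{\dt\bvec{f}} + \norm[H^1(\Omega)^d]{\dt\bvec{u}}$, the latter already controlled in $L^2$ in time from the first estimate. Finally, $p\in H^1(0,\tF;L^2(\Omega))$ together with $p\in L^\infty(0,\tF;P)$ gives $p\in C^0([0,\tF];P)$ by a standard interpolation/Lions--Magenes argument (strongly continuous into $L^2$, weakly continuous into $P$, combined with the uniform $P$-bound, actually yields strong continuity into $P$ here because the $c(\cdot,\cdot)$-energy is itself shown to be continuous in time via the second estimate).

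I expect the main obstacle to be the case $C_0=0$ throughout: the loss of the storage term removes the natural coercivity on $p$ and its time derivative, so every bound on $p$ and $\dt p$ has to be routed through the inf--sup stability of the divergence, and one must be careful that the constants produced this way are the ones claimed (the constant $\mathcal{C}$ in \eqref{eq:reg_p_initial} is allowed to depend on $\mu,\lambda,C_0$, which is exactly the slack needed). A secondary technical point is making the formal time-differentiation rigorous — this requires either a Galerkin scheme with basis functions in $\bvec{U}\times P$ and passing to the limit, or working with discrete time-difference quotients and using the $H^1$-in-time data regularity to pass to the limit; I would state this as a standard density/approximation argument rather than belabour it.
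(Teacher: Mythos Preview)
Your ``second energy estimate'' --- testing the time-differentiated mechanics with $\dt\bvec{u}$ and the flow equation with $q=\dt p$ so that the coupling terms cancel and one is left with $a(\dt\bvec{u},\dt\bvec{u})+C_0\norm[L^2(\Omega)]{\dt p}^2+\tfrac12\dt c(p,p)$ on the left --- is exactly the single estimate the paper carries out, and it alone suffices. Your ``first estimate'' is both unnecessary and, as written, incorrect: if you test the \emph{differentiated} mechanics with $\dt\bvec{u}$ you produce $b(\dt\bvec{u},\dt p)$, which does not cancel against the $-b(\dt\bvec{u},p)$ coming from the flow equation tested with $q=p$, so the displayed identity with $a(\dt\bvec{u},\dt\bvec{u})$ and $c(p,p)$ together cannot hold. (The combination that does cancel with $q=p$ is the \emph{undifferentiated} mechanics tested with $\dt\bvec{u}$, yielding $\tfrac12\dt a(\bvec{u},\bvec{u})$ rather than $a(\dt\bvec{u},\dt\bvec{u})$, hence only $\bvec{u}\in L^\infty(0,\tF;\bvec{U})$.) Drop this step entirely.

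The one point you should make precise is the treatment of the right-hand side term $(g,\dt p)$ in the case-uniform way the paper does: split $\dt p=(\dt p-\lproj[\Omega]{0}\dt p)+\lproj[\Omega]{0}\dt p$; bound the zero-mean part via the inf--sup condition \eqref{eq:inf-sup:continuous} applied to the differentiated mechanics equation, which gives $\norm[L^2(\Omega)]{\dt p-\lproj[\Omega]{0}\dt p}\lesssim\norm[L^2(\Omega)^d]{\dt\bvec{f}}+(2\mu+d\lambda^+)\norm[H^1(\Omega)^d]{\dt\bvec{u}}$ and is then absorbed by Young into the $a(\dt\bvec{u},\dt\bvec{u})$ term; and bound the mean part by $(\lproj[\Omega]{0}g,\dt p)$, absorbed into the $C_0\norm[L^2(\Omega)]{\dt p}^2$ term (this contribution vanishing when $C_0=0$ by \eqref{eq:compatibility}). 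After integrating in $t$ over $(0,s)$ the term $-\tfrac12 c(p(0),p(0))$ appears on the right, and this is precisely where the hypothesis \eqref{eq:reg_p_initial} is used. The $C^0([0,\tF];P)$ conclusion then follows directly because the estimate holds for every $s\in(0,\tF]$, giving a uniform bound on $\norm[L^2(\Omega)^d]{\GRAD p(s)}$; no separate interpolation argument is needed.
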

\begin{proof}
  Owing to $\bvec{f}\in H^1(0,\tF;L^2(\Omega)^d)$, we can take $\varphi=-\dt\psi\in C_{\rm c}^\infty((0,\tF))$ in \eqref{eq:weak_form.mech}, then use an integration by parts in time in both \eqref{eq:weak_form.mech} and \eqref{eq:weak_form.fluid} to infer
  \begin{subequations}
    \begin{alignat}{2}
      \label{eq:reg_mech}
      a(\dt\bvec{u},\bvec{v}) + b(\bvec{v},\dt p)
      &= (\dt\bvec{f},\bvec{v})_{L^2(\Omega)^d}
      &\qquad&\text{$\forall\bvec{v}\in\bvec{U}$ in $L^2(0,\tF)$},
      \\
      \label{eq:reg_fluid}
      C_0 (\dt p,q)_{L^2(\Omega)}
      - b(\dt\bvec{u},q)
      + c(p,q)
      &= (g,q)_{L^2(\Omega)}
      &\qquad&\text{$\forall q\in P$ in $L^2(0,\tF)$}. 
    \end{alignat}
  \end{subequations}
  Let now $s\in(0,\tF]$. Taking $\bvec{v}=\dt \bvec{u}$ in \eqref{eq:reg_mech} and $q =\dt p$ in \eqref{eq:reg_fluid}, 
  summing the resulting equations, and integrating over $(0,s)$, yields
    \begin{multline*}
      \int_0^{s}\left[
      a(\dt\bvec{u}(t),\dt\bvec{u}(t))
      + C_0\norm[L^2(\Omega)]{\dt p(t)}^2
      + c(p(t),\dt p(t))
      \right]
      \ud t =
      \\
      \int_0^{s}\left[
        (\dt\bvec{f}(t),\dt \bvec{u}(t))_{L^2(\Omega)^d} + (g(t),\dt p(t))_{L^2(\Omega)}
        \right]\ud t.
    \end{multline*}
    Using the coercivity of the bilinear form $a$ expressed by the leftmost inequality in \eqref{eq:cv_bnd_StressStrain}, 
    the identity $c(\varphi,\dt\varphi)=\frac12\dt c(\varphi,\varphi)$, and recalling the uniform ellipticity 
    assumption \eqref{eq:ass_diff} on the hydraulic mobility tensor, it follows from the previous relation that
    \begin{multline}\label{eq:basic_reg_est1}
      \int_0^{s}\left[
        \frac{2\mu}{C_{\rm K}^2}\norm[H^1(\Omega)^d]{\dt\bvec{u}(t)}^2 + C_0\norm[L^2(\Omega)]{\dt p(t)}^2
        \right]\ud t
      +\frac{\underline{K}}2 \norm[{L^2(\Omega)^d}]{\GRAD p(s)}^2 -
      \frac{\overline{K}}2 \norm[{L^2(\Omega)^d}]{\GRAD p(0)}^2
      \\
      \le \int_0^{s}\left[
        (\dt\bvec{f}(t),\dt \bvec{u}(t))_{L^2(\Omega)^d} 
        + (g(t),\dt (p-\lproj[\Omega]{0}p)(t))_{L^2(\Omega)} 
        + (\lproj[\Omega]{0}g(t),\dt p(t))_{L^2(\Omega)}
        \right]\ud t,
    \end{multline}
    where the spatial mean-value operator $\lproj[\Omega]{0}:L^1(\Omega)\to\Real$ is defined such that, 
    for all $\varphi\in L^1(\Omega)$,
    \[
    \lproj[\Omega]{0}\varphi\coloneq\frac{1}{|\Omega|}\int_\Omega \varphi(\bvec{x})\ud\bvec{x}.
    \]
    (Notice that $\lproj[\Omega]{0}\varphi$ is identified with a constant function over $\Omega$ when needed.)

    Using the continuous inf-sup condition \eqref{eq:inf-sup:continuous}, equation \eqref{eq:reg_mech}, the continuity 
    of the bilinear form $a$ expressed by rightmost inequality in \eqref{eq:cv_bnd_StressStrain}, 
    and the Cauchy--Schwarz inequality, we obtain, for a.e. $t\in(0,s)$,
    \begin{equation}\label{eq:reg_est_inf-sup}
    \begin{aligned}
      \tilde{\beta}\norm[L^2(\Omega)]{\dt (p-\lproj[\Omega]{0}p)(t)} &\le
      \sup_{\bvec{v}\in\bvec{U}, \norm[H^1(\Omega)^d]{\bvec{v}}=1}\left[
      a(\dt\bvec{u}(t),\bvec{v})
      - (\dt\bvec{f}(t),\bvec{v})_{L^2(\Omega)^d}
      \right]
      \\
      &\le (2\mu+d\lambda^+)\norm[H^1(\Omega)^d]{\dt\bvec{u}(t)}
      +\norm[{L^2(\Omega)^d}]{\dt\bvec{f}(t)}.
    \end{aligned}
    \end{equation}
    Thus, using the Cauchy--Schwarz inequality, the previous estimate, and the Young inequality to bound 
    the right-hand side of \eqref{eq:basic_reg_est1}, leads to
    $$
    \begin{aligned}
      &\int_0^{s}\left[
      \frac{2\mu}{C_{\rm K}^2}\norm[H^1(\Omega)^d]{\dt\bvec{u}(t)}^2 + C_0\norm[L^2(\Omega)]{\dt p(t)}^2 \right]\ud t
      +\frac{\underline{K}}2 \norm[{L^2(\Omega)^d}]{\GRAD p(s)}^2 -
      \frac{\overline{K}}2 \norm[{L^2(\Omega)^d}]{\GRAD p(0)}^2
      \\
      &\quad\le \int_0^{s}
        \norm[H^1(\Omega)^d]{\dt\bvec{u}(t)} \left(
        \norm[L^2(\Omega)^d]{\dt\bvec{f}(t)} + \frac{2\mu+d\lambda^+}{\tilde{\beta}} \norm[L^2(\Omega)]{g(t)} \right)
        + \frac{\norm[L^2(\Omega)]{g(t)} \norm[L^2(\Omega)^d]{\dt\bvec{f}(t)}}{\tilde{\beta}}
      \\
      &\qquad\qquad\qquad\qquad\qquad\qquad\qquad\qquad\qquad\qquad\qquad\qquad\qquad\,
      +\norm[L^2(\Omega)]{\dt p(t)} \norm[L^2(\Omega)]{\lproj[\Omega]{0}g(t)} \ud t
      \\
      &\quad\le\int_0^{s} \left[
          \frac{C_{\rm K}^2}{\mu}\norm[L^2(\Omega)^d]{\dt\bvec{f}(t)}^2
          +\frac{(2\mu+d\lambda^+)^2(4 C_{\rm K}^4 + 1)}{8\mu C_{\rm K}^2 \tilde{\beta}^2}\norm[L^2(\Omega)]{g(t)}^2 
          + \frac1{2 C_0}\norm[L^2(\Omega)]{\lproj[\Omega]{0}g(t)}^2 \right]\ud t
      \\
      &\qquad\qquad\qquad\qquad\qquad\qquad\qquad\qquad\qquad\qquad
      +\int_0^{s}\frac{\mu}{C_{\rm K}^2}\norm[H^1(\Omega)^d]{\dt\bvec{u}(t)}^2 
      + \frac{C_0}2 \norm[L^2(\Omega)]{\dt p(t)}^2\ud t,
    \end{aligned}
    $$
    where we have adopted the convention that $C_0^{-1}\norm[L^2(\Omega)]{\lproj[\Omega]{0}g(t)}=0$ if $C_0=0$.
    Rearranging, multiplying by $2$, and hiding, in the right-hand side, a multiplicative costant that depends only on $\Omega$ and $d$ for the sake of legibility, 
    we get
    \begin{multline}\label{eq:basic_reg_est2}
      \int_0^{s}\left[
        2\mu\norm[H^1(\Omega)^d]{\dt\bvec{u}(t)}^2+C_0\norm[L^2(\Omega)]{\dt p(t)}^2
        \right]\ud t
      +\underline{K} \norm[{L^2(\Omega)^d}]{\GRAD p(s)}^2 
      -\overline{K}\norm[{L^2(\Omega)^d}]{\GRAD p(0)}^2
      \\
      \lesssim
      \mu^{-1}\norm[H^1(0,\tF;L^2(\Omega)^d)]{\bvec{f}}^2
      + (2\mu+d\lambda^+)^2\mu^{-1}\norm[L^2(0,\tF;L^2(\Omega))]{g}^2 
      + {C_0}^{-1}\norm[L^2(0,\tF;L^2(\Omega))]{\lproj[\Omega]{0}g}^2.
    \end{multline}
    Owing to \eqref{eq:reg_p_initial} along with the fact that 
    $\norm[L^2(\Omega)^d]{\bvec{f}(0)}\lesssim\norm[H^1(0,\tF;L^2(\Omega)3^d)]{\bvec{f}}$, we obtain
    $$
      \norm[{L^2(\Omega)^d}]{\GRAD p(0)}^2 \lesssim 
      \mathcal{C}^2\left(\norm[H^1(0,\tF;L^2(\Omega)^d)]{\bvec{f}}^2 + \norm[H^1(\Omega)]{\phi^0}^2\right).
    $$
    Hence, plugging the previous bound into \eqref{eq:basic_reg_est2}, it is inferred that
    \begin{multline}\label{eq:basic_reg_est3}
      \int_0^{s}\left[
      2\mu \norm[H^1(\Omega)^d]{\dt\bvec{u}(t)}^2+C_0\norm[L^2(\Omega)]{\dt p(t)}^2
      \right]\ud t
      +\underline{K} \norm[{L^2(\Omega)^d}]{\GRAD p(s)}^2 \lesssim
      \overline{K}\mathcal{C}^2 \norm[H^1(\Omega)]{\phi^0}^2 +
      \\
      \qquad\;\;
       (\mu^{-1}+\overline{K}\mathcal{C}^2)\norm[H^1(0,\tF;L^2(\Omega)^d)]{\bvec{f}}^2 
      + (2\mu+d\lambda^+)^2 \mu^{-1} \norm[L^2(0,\tF;L^2(\Omega))]{g}^2
      + C_0^{-1}\norm[L^2(0,\tF;L^2(\Omega))]{\lproj[\Omega]{0}g}^2.
    \end{multline}
    Taking $s=\tF$ and denoting by $\mathcal{R}(\bvec{f}, g, \phi^0)$ the quantity in the right-hand side of 
    the previous bound, yields 
    \begin{equation}\label{eq:reg_est.u_C0p}
       2\mu \norm[L^2(0,\tF;H^1(\Omega)^d)]{\dt\bvec{u}}^2+\norm[L^2(0,\tF;L^2(\Omega))]{\dt (C_0 p)}^2
      \lesssim \mathcal{R}(\bvec{f}, g, \phi^0),
    \end{equation}
    Therefore, 
    recalling the definition of the $H^1(0,\tF;V)$-norm and the assumption $(\bvec{u},p)\in L^2(0,\tF;\bvec{U}\times P)$, 
    we obtain $\bvec{u}\in H^1(0,\tF;\bvec{U})$ and $C_0 p \in H^1(0,\tF;L^2(\Omega))$. 
    According to \eqref{eq:compatibility}, in the case $C_0=0$ we have $\lproj[\Omega]{0} p = 0$, whereas, if $C_0>0$, 
    we infer from the Morrey inequality and the boundedness of the mean-value operator that
    \begin{equation}\label{eq:reg_est_avp}
      C_0\norm[{C^0([0,\tF];L^2(\Omega))}]{\lproj[\Omega]{0} p}^2 \lesssim
      \norm[H^1(0,\tF;L^2(\Omega))]{\lproj[\Omega]{0}(C_0 p)}^2 \lesssim
      \norm[H^1(0,\tF;L^2(\Omega))]{C_0 p}^2.
    \end{equation}
    As a result, we obtain $\lproj[\Omega]{0} p\in H^1(0,\tF;P)\subset C^0([0,\tF];P)$. 
    Moreover, from the Poincar\'e--Wirtinger inequality and \eqref{eq:basic_reg_est3} it follows that 
    \begin{equation}\label{eq:reg_est_p}
      \underline{K} \norm[{C^0([0,\tF];H^1(\Omega))}]{p-\lproj[\Omega]{0} p}^2 
      \lesssim \max_{s\in[0,\tF]} \underline{K}\norm[L^2(\Omega)^d]{\GRAD p(s)}^2
      \lesssim \mathcal{R}(\bvec{f}, g, \phi^0).
    \end{equation}
    We conclude by observing that the bounds \eqref{eq:reg_est_avp} and \eqref{eq:reg_est_p} yield $p\in C^0([0,\tF];P)$, 
    while \eqref{eq:reg_est_inf-sup} together with \eqref{eq:reg_est.u_C0p} and \eqref{eq:reg_est_avp} 
    yield $p\in H^1(0,\tF;L^2(\Omega))$.
\end{proof}

\begin{remark}[Regularity of the flux]
  Under the assumptions of Theorem \ref{thm:reg_est}, one can also prove that $-\DIV(\diff\GRAD p)\in L^2(0,\tF ; L^2(\Omega))$. 
  Indeed, from $\bvec{u}\in H^1(0,\tF;\bvec{U})$ and $p\in H^1(0,\tF;L^2(\Omega))$, we infer
  $
  \dt(C_0 p + \DIV\bvec{u})\in L^2(0,\tF; L^2(\Omega))
  $
  and, as a result of \eqref{eq:biot:strong:flow},
  $$
  \DIV(\diff\GRAD p) = g - \dt(C_0 p + \DIV\bvec{u}) \in L^2(0,\tF; L^2(\Omega)).
  $$
  This result will be needed to define the pressure interpolate in \eqref{eq:hat.p}.
\end{remark}

\subsubsection{Initial pore pressure}\label{sec:regularity:p(0)}
The bound \eqref{eq:reg_p_initial} on the gradient of the initial pore pressure is instrumental to proving the regularity results of Theorem \ref{thm:reg_est}. Under some additional assumptions on problem data, we are able to prove that $p(0)$ solving \eqref{eq:initial_mech} and \eqref{eq:weak_form.initial} satisfies \eqref{eq:reg_p_initial}.
We distinguish three cases:
\begin{enumerate}
\item \emph{Assume $\Omega$ convex and $C_0=0$}. Then, $\bvec{u}(0)$ and $p(0)$ solve the well-posed Stokes problem 
  $$
  \begin{alignedat}{2}
    (2\mu\GRADs\bvec{u}(0),\GRADs\bvec{v})_{L^2(\Omega)^{d\times d}}
    + b(\bvec{v},p(0))
    &=
    (\bvec{f}(0)-\lambda\GRAD\phi^0,\bvec{v})_{L^2(\Omega)^d}
    &\qquad&\forall \bvec{v}\in \bvec{U},
    \\
    -b(\bvec{u}(0),r) &=
    (\phi^0,r)_{L^2(\Omega)}
    &\qquad&\forall r\in L^2(\Omega).
  \end{alignedat}
  $$
  It is proved in \cite{Dauge:89} for convex polygonal or polyhedral domains that 
  \[
    \begin{aligned}
      2\mu\norm[H^2(\Omega)^d]{\bvec{u}(0)}
      + \norm[L^2(\Omega)^d]{\GRAD p(0)}
      &\lesssim \norm[L^2(\Omega)^d]{\bvec{f}(0)-\lambda\GRAD\phi^0} + \norm[H^1(\Omega)]{\phi^0}
      \\
      &\lesssim \norm[L^2(\Omega)^d]{\bvec{f}(0)} + (1+|\lambda|)\norm[H^1(\Omega)]{\phi^0}.
    \end{aligned}
  \]
  As a result, estimate \eqref{eq:reg_p_initial} holds.
\item \emph{Assume $\Omega$ convex and $C_0>0$}. We can add the quantity $C_0^{-1}(\DIV\bvec{u}(0),\DIV\bvec{v})_{L^2(\Omega)}$ to both sides of \eqref{eq:initial_mech} and rearrange, thus obtaining
  $$
  \begin{aligned}
    a(\bvec{u}(0),\bvec{v})
    + C_0^{-1}(\DIV\bvec{u}(0),\DIV\bvec{v})_{L^2(\Omega)}
    &= (\bvec{f}(0),\bvec{v})_{L^2(\Omega)^d} + (p(0)+C_0^{-1}\DIV\bvec{u}(0), \DIV\bvec{v})_{L^2(\Omega)}
    \\
    &= (\bvec{f}(0) - C_0^{-1}\GRAD\phi^0 ,\bvec{v})_{L^2(\Omega)},
  \end{aligned}
  $$
  where, to pass to the second line, we have used \eqref{eq:weak_form.initial} and Green's formula. Therefore, we infer that the initial displacement $\bvec{u}(0)$ is the solution of the well-posed linear elasticity problem 
  $$
  (2\mu\GRADs\bvec{u}(0) + (\lambda + C_0^{-1})\DIV\bvec{u}(0)\Id,\GRADs\bvec{v})_{L^2(\Omega)^{d\times d}}
  = (\bvec{f}(0) - C_0^{-1}\GRAD\phi^0 ,\bvec{v})_{L^2(\Omega)^d} \qquad\forall \bvec{v}\in \bvec{U}.
  $$
  Once $\bvec{u}(0)$ is found, the initial pressure $p(0)$ is given by \eqref{eq:biot:strong:initial}.
  It is proved in \cite[Theorem 3.2.1.2]{Grisvard:85} for convex Lipschitz 
  domains that there is $\widetilde{\mathcal{C}}$, depending on $\mu,\lambda$ and $C_0$, such that
  $$
    \norm[H^2(\Omega)^d]{\bvec{u}(0)}
    \lesssim \widetilde{\mathcal{C}}\norm[L^2(\Omega)^d]{\bvec{f}(0)-C_0^{-1}\GRAD\phi^0}
    \le \widetilde{\mathcal{C}} \left( \norm[L^2(\Omega)^d]{\bvec{f}(0)} + C_0^{-1}\norm[H^1(\Omega)]{\phi^0} \right).
  $$
  Hence, it is inferred that $p(0)= C_0^{-1} \left(\phi^0 - \DIV\bvec{u}(0)\right)\in H^1(\Omega)$ 
  and \eqref{eq:reg_p_initial} holds.
  \item \emph{Assume $\diff = \kappa \Id$, with $\kappa:\Omega\to\Real$.} 
  Owing to this assumption, and recalling the boundary condition \eqref{eq:biot:strong:bc.p}, we consider an initial pressure $p(0)$ such that
  \begin{equation}\label{eq:pressure.elliptic_bc}
    \GRAD p(0)\cdot\normal = 0 \qquad\text{on }\partial\Omega.
  \end{equation}
  Since $\bvec{f}\in H^1(0,\tF;L^2(\Omega)^d)$, it is inferred from \eqref{eq:biot:strong:mechanics} that in $H^{-1}(\Omega)$ it holds 
  $$
  \begin{aligned}
    \DIV(\GRAD p(0)) &= \DIV \left(\bvec{f}(0) + \DIV \btens{\sigma}(\GRADs\bvec{u}(0)) \right)
    \\
    &= \DIV \bvec{f}(0) + \DIV \left( (2\mu + \lambda)\GRAD (\DIV \bvec{u}(0)) -\mu\CURL(\CURL\bvec{u}(0)) \right)
    \\
    &= \DIV \bvec{f}(0) + \DIV \left( (2\mu + \lambda)\GRAD (\phi^0 - C_0 p(0)) \right),
  \end{aligned}
  $$
  where we have used $\LAPL\bvec{u}(0) = \GRAD(\DIV\bvec{u}(0)) - \CURL(\CURL\bvec{u}(0))$ to pass to the second line and 
  $\DIV(\CURL\CURL\bvec{u}(0))= \bvec{0}$ together with \eqref{eq:biot:strong:initial} to conclude. 
  Rearranging the previous terms in the equality, we have
  \begin{equation}\label{eq:pressure.elliptic_pb}
    \DIV(\GRAD p(0)) = (1 + 2\mu C_0 + \lambda C_0)^{-1} \DIV \left( \bvec{f}(0) + (2\mu+\lambda)\GRAD\phi^0\right)
    \in H^{-1}(\Omega).
  \end{equation}
  Multiplying \eqref{eq:pressure.elliptic_pb} by $p(0)$, integrating both sides by parts, 
  recalling condition \eqref{eq:pressure.elliptic_bc}, using the Cauchy--Schwarz inequality, simplifying the result, and using the triangle inequality, leads to
  $$
  \begin{aligned}
  \norm[L^2(\Omega)^d]{\GRAD p(0)}
  &\le (1 + 2\mu C_0 + \lambda C_0)^{-1}\norm[L^2(\Omega)^d]{\bvec{f}(0) + (2\mu+\lambda)\GRAD\phi^0}
  \\
  &\le \overline{\mathcal{C}}\left(\norm[L^2(\Omega)^d]{\bvec{f}(0)} + \norm[H^1(\Omega)]{\phi^0}\right),
  \end{aligned}
  $$  
  which is \eqref{eq:reg_p_initial} with $\overline{\mathcal{C}}$ depending on $\mu,\lambda$ and $C_0$.
\end{enumerate}


\section{Abstract framework}\label{sec:abstract.framework}

We consider an abstract analysis framework for the discretisation of the Biot problem \eqref{eq:biot:strong} that enables one to exploit the regularity results of Theorem \ref{thm:reg_est}.
For the sake of simplicity, the focus is on fully couples schemes.

\subsection{Discrete problem}

In the spirit of \cite{Di-Pietro.Droniou:18}, we consider a scheme for the approximation of the Biot problem in fully discrete formulation.

\begin{assumption}[Discrete spaces and forms]\label{ass:discrete.setting}
  We assume the following objects given:
  \begin{enumerate}
  \item \emph{Temporal subdivision.} A regular subdivision of $[0,\tF]$ into $N>0$ time intervals of length $\tau\coloneq\tF/N$.
    For all $0\le n\le N$, we let $t^n\coloneq n\tau$.
    Given a vector space $V$ we define, for all $1\le n\le N$, the backward discrete time derivative operator $\delta_t^n:V^{N+1}\to V$ such that, for any family $\varphi_\tau\coloneq(\varphi^n)_{0\le n\le N}\in V^{N+1}$,
    \[
      \delta_t^n\varphi_\tau\coloneq\frac{\varphi^n-\varphi^{n-1}}{\tau}.
    \]
  \item \emph{Velocity and pore pressure unknowns.} Two finite-dimensional vector spaces $\bvec{\abs{U}}_h$ and $\abs{P}_h$ containing, respectively, the discrete unknowns for the displacement and the pore pressure.
  \item \emph{Pore pressure reconstruction in $L^2(\Omega)$.} A linear operator $\abs{r}_h:\abs{P}_h\to L^2(\Omega)$ enabling the reconstruction of a function in $L^2(\Omega)$ from a set of pore pressure unknowns.
    We will denote by $\abs{L}_h$ the range of $\abs{r}_h$. 
  \item \emph{Bilinear forms.} Three bilinear forms $\abs{a}_h:\bvec{\abs{U}}_h\times\bvec{\abs{U}}_h\to\Real$, $\abs{b}_h:\bvec{\abs{U}}_h\times\abs{P}_h\to\Real$, and $\abs{c}_h:\abs{P}_h\times\abs{P}_h\to\Real$ discretising, respectively, the mechanical term in \eqref{eq:biot:strong:mechanics}, the displacement--pore pressure coupling terms, and the Darcy term in \eqref{eq:biot:strong:flow}.
    The bilinear form $\abs{b}_h$ depends on its second argument only through the reconstruction operator $\abs{r}_h$, that is, there is a bilinear form $\widetilde{\abs{b}}_h:\bvec{\abs{U}}_h\times\abs{L}_h\to\Real$ such that
    \[
    \abs{b}_h(\bvec{v}_h,q_h) = \widetilde{\abs{b}}_h(\bvec{v}_h,\abs{r}_hq_h)\qquad\forall (\bvec{v}_h,q_h)\in\bvec{\abs{U}}_h\times\abs{P}_h.
    \]
    The bilinear form $\abs{c}_h$ is further assumed to be symmetric.
  \item \emph{Source terms and initial condition.} One family of linear forms $(\abs{f}_h^n:\bvec{\abs{U}}_h\to\Real)_{1\le n\le N}$ and one of functions $(\abs{g}^n)_{1\le n\le N}\in L^2(\Omega)^N$, as well as a function $\varphi^0\in L^2(\Omega)$.
    If $C_0=0$, we assume that the following compatibility conditions are verified:
    \begin{equation}\label{eq:abs:compatibility}
      \int_\Omega \varphi^0(\bvec{x})\ud\bvec{x} = 0,\qquad
      \int_\Omega \abs{g}^n(\bvec{x})\ud\bvec{x} = 0\quad\forall 1\le n\le N.
    \end{equation}
  \end{enumerate}
\end{assumption}
A few remarks are of order before proceeding.
\begin{remark}[Symmetry of $\abs{c}_h$]
  The symmetry of the bilinear form $\abs{c}_h$ is used in the proof of Lemma \ref{lem:abs:a-priori}; see, in particular, \eqref{eq:abs.est:2}.
\end{remark}
\begin{remark}[Discrete divergence operator]
  Denote, for any $\bvec{v}_h\in\bvec{\abs{U}}_h$, by $\abs{D}_h\bvec{v}_h\in\abs{L}_h$ the Riesz representation of the linear form $\widetilde{\abs{b}}_h(\bvec{v}_h,\cdot)$ in $\abs{L}_h$ equipped with the usual $L^2$-product, that is,
  \begin{equation}\label{eq:abs.Dh}
    (\abs{D}_h\bvec{v}_h,q_h)_{L^2(\Omega)} = -\widetilde{\abs{b}}_h(\bvec{v}_h,q_h)\qquad\forall q_h\in\abs{L}_h.
  \end{equation}
  The resulting operator $\abs{D}_h:\bvec{\abs{U}}_h\to\abs{L}_h$ can be interpreted as a discrete counterpart of the continuous divergence operator.
\end{remark}
\begin{remark}[Time discretisation]
  Since the emphasis of this work is on space discretisations, we have decided to focus on time discretisations based on a first-order backward difference approximation of the time derivative.
  Higher-order approximations will be considered numerically in Section \ref{sec:numerical.examples}.
  The analysis of more elaborate time stepping strategies is postponed to future works.
\end{remark}
\begin{problem}[Abstract discrete problem]
  The families of discrete displacements $\bvec{u}_{h\tau}\coloneq(\bvec{u}_h^n)_{0\le n\le N}\in\bvec{\abs{U}}_h^{N+1}$ and pore pressures $p_{h\tau}\coloneq(p_h^n)_{0\le n\le N}\in\abs{P}_h^{N+1}$ are such that, for $n=1,\ldots,N$,
  \begin{subequations}\label{eq:biot:discrete}
    \begin{alignat}{2}
      \label{eq:biot:discrete:mechanics}
      \abs{a}_h(\bvec{u}_h^n,\bvec{v}_h) + \abs{b}_h(\bvec{v}_h,p_h^n) &= \abs{f}_h^n(\bvec{v}_h) &\qquad& \forall \bvec{v}_h\in\bvec{\abs{U}}_h,
      \\
      \label{eq:biot:discrete:flow}
      C_0(\abs{r}_h\delta_t^n p_{h\tau},\abs{r}_h q_h)_{L^2(\Omega)} - \abs{b_h}(\delta_t^n \bvec{u}_{h\tau},q_h) + \abs{c}_h(p_h^n,q_h) &= (\abs{g}^n,\abs{r}_h q_h)_{L^2(\Omega)}
      &\qquad& \forall q_h\in\abs{P}_h,
    \end{alignat}
    and the initial values of the discrete displacement and pore pressure satisfy
    \begin{equation}\label{eq:biot:discrete:ic}
      C_0(\abs{r}_hp_h^0,\abs{r}_h q_h)_{L^2(\Omega)} - \abs{b}_h(\bvec{u}_h^0,q_h) = (\varphi^0, \abs{r}_h q_h)_{L^2(\Omega)}\qquad\forall q_h\in\abs{P}_h.
    \end{equation}
    If $C_0=0$, we additionally require
    \begin{equation}\label{eq:biot:discrete:zero.average.p}
      \int_\Omega\abs{r}_hp_h^n=0\qquad\forall 1\le n\le N.
    \end{equation}
  \end{subequations}
\end{problem}

\begin{remark}[Discrete porosity]
  Define the family of discrete porosities $\phi_{h\tau}\coloneq(\phi_h^n)_{0\le n\le N}\in\abs{L}_h^{N+1}$ such that
  \[
    \phi_h^n\coloneq C_0\abs{r}_h p_h^n + \abs{D}_h \bvec{u}_h^n\qquad\forall 0\le n\le N.
  \]
  Recalling \eqref{eq:abs.Dh}, equation \eqref{eq:biot:discrete:flow} can be reformulated as follows:
  \[
  (\delta_t^n\phi_{h\tau},\abs{r}_h q_h)_{L^2(\Omega)} + \abs{c}_h(p_h^n,q_h) = (\abs{g}_h^n,\abs{r}_h q_h)_{L^2(\Omega)}\qquad\forall q_h\in\abs{P}_h,
  \]
  thereby emphasising the fact that the discrete porosity is the quantity under the discrete time derivative.
  Hence, to enforce an initial condition, we do not need to prescribe separately an initial discrete displacement $\bvec{u}_h^0\in\bvec{\abs{U}}_h$ and pore pressure $p_h^0\in\abs{P}_h$, but rather the combination of these quantities corresponding to $\phi_h^0$.
  The requirement on the discrete initial condition \eqref{eq:biot:discrete:ic} translates into
  \[
  (\phi_h^0, q_h)_{L^2(\Omega)} = (\varphi^0, q_h)_{L^2(\Omega)}\qquad\forall q_h\in\abs{L}_h,
  \]
  meaning that $\phi_h^0$ is the $L^2$-orthogonal projection of the continuous initial porosity $\varphi^0$ on $\abs{L}_h$.
\end{remark}

With the developments of the next section in mind, we reformulate the abstract discrete problem by integrating \eqref{eq:biot:discrete:flow} in time.
At the discrete level, this is done by summing, for all $1\le i\le n$, \eqref{eq:biot:discrete:flow} multiplied by the timestep $\tau$. 
Introducing, for all $1\le n\le N$, the families $s_{h\tau}\coloneq(s_h^n)_{0\le n\le N}\in\abs{P}_h^{N+1}$ and $(\abs{G}^n)_{1\le n\le N}\in L^2(\Omega)^N$ such that
$$
\text{%
  $s_h^0 \coloneq 0$, 
  $s_h^n \coloneq \sum_{i=1}^n \tau p_h^i$ and
  $\abs{G}^n\coloneq \sum_{i=1}^n \tau \abs{g}^i$ for all $1\le n\le N$,
}
$$
it is inferred that, for all $n=1,\ldots,N$, the solution of \eqref{eq:biot:discrete} satisfies, for all $(\uvec{v}_h,q_h)\in\bvec{\abs{U}}_h\times\abs{P}_h$,
\[
\begin{aligned}
  \abs{a}_h(\bvec{u}_h^n,\bvec{v}_h) + \abs{b}_h(\bvec{v}_h,p_h^n) & = \abs{f}_h^n(\bvec{v}_h),
  \\
  C_0(\abs{r}_h p_h^n,\abs{r}_h q_h)_{L^2(\Omega)}-\abs{b_h}(\bvec{u}^n_h,q_h)+\tau\abs{c}_h(p_h^n,q_h) &
  = (\abs{G}^n+\varphi^0,\abs{r}_h q_h)_{L^2(\Omega)}
  - \abs{c}_h(s_h^{n-1},q_h).
\end{aligned}
\]
Summing the two equations above we infer that, for $n=1,\ldots,N$, $(\bvec{u}_h^n,p_h^n)\in\bvec{\abs{U}}_h\times\abs{P}_h$ solves
\begin{equation}\label{eq:biot:discrete2}
  \abs{A}_h \left((\bvec{u}_h^n,p_h^n),(\bvec{v}_h,q_h) \right) = \abs{F}_h^n(\bvec{v}_h,q_h)
  \qquad \forall (\bvec{v}_h,q_h)\in\bvec{\abs{U}}_h\times\abs{P}_h, 
\end{equation}
where we have defined the total bilinear form $\abs{A}_h:(\bvec{\abs{U}}_h\times\abs{P}_h)\times (\bvec{\abs{U}}_h\times\abs{P}_h)\to\Real$ such that, for all $(\bvec{u}_h,p_h),(\bvec{v}_h,q_h)\in\bvec{\abs{U}}_h\times\abs{P}_h$, 
\begin{equation*}
  \abs{A}_h \big((\bvec{u}_h,p_h), (\bvec{v}_h,q_h) \big)
  \coloneq  
  \abs{a}_h(\bvec{u}_h,\bvec{v}_h) + \abs{b}_h(\bvec{v}_h,p_h) - \abs{b}_h(\bvec{u}_h,q_h)
  + C_0(\abs{r}_h p_h,\abs{r}_h q_h)_{L^2(\Omega)} +\tau\abs{c}_h(p_h,q_h),
\end{equation*}
and the linear form
\begin{equation*}
  \abs{F}_h^n(\bvec{v}_h,q_h) \coloneq
  \abs{f}_h^n(\bvec{v}_h) + (\abs{G}^n+\varphi^0,\abs{r}_h q_h)_{L^2(\Omega)} - \abs{c}_h(s_h^{n-1},q_h). 
\end{equation*}
Notice that $\abs{F}_h^n$ depends only on the problem data and on the (available) values of the discrete pressure at 
times $t^i$, $0\le i\le n-1$, and it therefore appears legitimately in the right-hand side of the equation defining 
$(\bvec{u}_h^n,p_h^n)$.

\subsection{A priori estimate}

We next introduce a set of assumptions that enable us to derive an a priori estimate for problem \eqref{eq:biot:discrete}.
Such an estimate has two purposes:
on the one hand, when the families of forms $(\abs{f}_h^n)_{1\le n\le N}$ and functions $(\abs{g}_h^n)_{1\le n\le N}$ in Assumption \ref{ass:discrete.setting} are discrete counterparts of the source terms $\bvec{f}$ and $g$ in \eqref{eq:biot:strong:mechanics} and \eqref{eq:biot:strong:flow}, respectively, and $\varphi^0=\phi^0$, it establishes the well-posedness of the discrete problem \eqref{eq:biot:discrete};
on the other hand, when these quantities represent, respectively, the residuals of the mechanics and flow equations at each discrete time and the residual of the initial condition, it gives a basic error estimate.
\begin{assumption}[Stability]\label{ass:stability}
  With the same notations as in Assumption \ref{ass:discrete.setting}, we assume the following:
  \begin{enumerate}
  \item \emph{Discrete norms.} We furnish $\bvec{\abs{U}}_h$ with the norm $\norm[\bvec{\abs{U}},h]{{\cdot}}$ and $\abs{P}_h$ with the seminorm $\norm[\abs{P},h]{{\cdot}}$, which is assumed to be a norm on $\abs{P}_{h,0}\coloneq\left\{q_h\in\abs{P}_h\st\int_\Omega\abs{r}_hq_h=0\right\}$.
    The norm dual to $\norm[\bvec{\abs{U}},h]{{\cdot}}$ is the mapping $\norm[\bvec{\abs{U}},h,*]{{\cdot}}$ such that, for any linear form $\ell_h:\bvec{\abs{U}}_h\to\Real$,
    \begin{equation}\label{eq:abs:norm.Uh*}
      \norm[\bvec{\abs{U}},h,*]{\ell_h}\coloneq\sup_{\bvec{v}_h\in\bvec{\abs{U}}_h\setminus\{\bvec{0}\}}\frac{\ell_h(\bvec{v}_h)}{\norm[\bvec{\abs{U}},h]{\bvec{v}_h}}.
    \end{equation}
  \item \emph{Coercivity and boundedness of $\abs{a}_h$ and $\abs{c}_h$.}
    There exist two real numbers $0<\underline{\alpha}\le\overline{\alpha}$ such that
    \begin{alignat}{2}\label{eq:abs.ah:coercivity}
      \underline{\alpha}\norm[\bvec{\abs{U}},h]{\bvec{v}_h}^2&\le\abs{a}_h(\bvec{v}_h,\bvec{v}_h) &\qquad&\forall \bvec{v}_h\in\bvec{\abs{U}}_h,
      \\ \label{eq:abs.ah:boundedness}
      \abs{a}_h(\bvec{w}_h,\bvec{v}_h)&\le\overline{\alpha}\norm[\bvec{\abs{U}},h]{\bvec{w}_h}\norm[\bvec{\abs{U}},h]{\bvec{v}_h} &\qquad&\forall \bvec{w}_h,\bvec{v}_h\in\bvec{\abs{U}}_h,
    \end{alignat}
    and there exists $\gamma>0$ such that
    \begin{equation}\label{eq:abs.ch:coercivity}
      \gamma\norm[\abs{P},h]{q_h}^2\le\abs{c}_h(q_h,q_h)\qquad\forall q_h\in\abs{P}_h,
    \end{equation}
  \item \emph{Inf-sup stability of $\abs{b}_h$.}
    There exists $\beta>0$ such that
    \begin{equation}\label{eq:abs.bh:inf-sup}
      \text{
        $\beta\norm[L^2(\Omega)]{\abs{r}_hq_h-\lproj[\Omega]{0}\abs{r}_hq_h}
        \le\norm[\bvec{\abs{U}},h,*]{\abs{b}_h(\cdot,q_h)}$
        for all $q_h\in\abs{P}_h$.
      }
    \end{equation}
  \end{enumerate}
\end{assumption}
\begin{remark}[Uniform stability]
  In practice, deriving optimal error estimates requires that the real numbers $\underline{\alpha}$, $\overline{\alpha}$, $\beta$, and $\gamma$ are independent of discretisation parameters such as the (temporal and) spatial meshsize.
\end{remark}
\begin{lemma}[Abstract a priori estimate]\label{lem:abs:a-priori}
  Let Assumptions \ref{ass:discrete.setting} and \ref{ass:stability} hold true, and let $(\bvec{u}_{h\tau},p_{h\tau})\in\bvec{\abs{U}}_h^{N+1}\times\abs{P}_h^{N+1}$ solve \eqref{eq:biot:discrete}.
  Then, it holds
  \begin{multline}\label{eq:abs:a-priori}
    \underline{\alpha}\sum_{n=1}^N\tau\norm[\bvec{\abs{U}},h]{\bvec{u}_h^n}^2
    + \sum_{n=1}^N\tau\left(
    C_0\norm[L^2(\Omega)]{\abs{r}_hp_h^n}^2
    + \frac{\beta^2\underline{\alpha}}{2\overline{\alpha}^2}\norm[L^2(\Omega)]{\abs{r}_hp_h^n-\lproj[\Omega]{0}\abs{r}_hp_h^n}^2
    \right)
    + \gamma\norm[\abs{P},h]{s_h^N}^2
    \le
    \mathcal{C}_1\sum_{n=1}^N\tau\norm[\bvec{\abs{U}},h,*]{\abs{f}_h^n}^2
    \\
    + \mathcal{C}_2\left(
    \sum_{n=1}^N\tau\norm[L^2(\Omega)]{\abs{G}^n}^2
    + \tF\norm[L^2(\Omega)]{\varphi^0}^2
    \right)
    + \mathcal{C}_3\left(
    \sum_{n=1}^N\tau\norm[L^2(\Omega)]{\lproj[\Omega]{0}\abs{G}^n}^2
    + \tF\norm[L^2(\Omega)]{\lproj[\Omega]{0}\varphi^0}^2
    \right),
  \end{multline}
  where
  \begin{equation}\label{eq:C1.C2.C3}
    \mathcal{C}_1\coloneq\frac{4\overline{\alpha}^2+2\underline{\alpha}^2}{\underline{\alpha}\overline{\alpha}^2},\qquad
    \mathcal{C}_2\coloneq\frac{16\overline{\alpha}^2}{\underline{\alpha}\beta^2},\qquad
    \mathcal{C}_3\coloneq\frac4{C_0}.
  \end{equation}
\end{lemma}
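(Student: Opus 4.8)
The plan is to mimic the structure of the continuous regularity proof of Theorem~\ref{thm:reg_est}, adapted to the discrete, time-stepped setting. First I would test \eqref{eq:biot:discrete:mechanics} with $\bvec{v}_h=\delta_t^n\bvec{u}_{h\tau}$ and \eqref{eq:biot:discrete:flow} with $q_h=p_h^n$, which is the discrete analogue of taking $\bvec{v}=\dt\bvec{u}$, $q=\dt p$; the reason for the slight asymmetry (using $p_h^n$ rather than $\delta_t^n p_{h\tau}$ in the flow equation) is that integrating the flow equation in time has already been carried out, so the Darcy term $\abs{c}_h(p_h^n,q_h)$ is the one that must be made to telescope. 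Summing the two tested equations, the coupling terms $\abs{b}_h(\delta_t^n\bvec{u}_{h\tau},p_h^n)$ cancel, leaving
\[
\abs{a}_h(\bvec{u}_h^n,\delta_t^n\bvec{u}_{h\tau}) + C_0(\abs{r}_h\delta_t^n p_{h\tau},\abs{r}_h p_h^n)_{L^2(\Omega)} + \abs{c}_h(p_h^n,p_h^n) = \abs{f}_h^n(\delta_t^n\bvec{u}_{h\tau}) + (\abs{g}^n,\abs{r}_h p_h^n)_{L^2(\Omega)}.
\]
Then I would multiply by $\tau$, sum over $n=1,\dots,N$, and apply the discrete ``product rule'' / summation-by-parts identities: for a symmetric bilinear form $\abs{a}_h$ one has $\tau\sum_n \abs{a}_h(\bvec{u}_h^n,\delta_t^n\bvec{u}_{h\tau}) \ge \tfrac12\abs{a}_h(\bvec{u}_h^N,\bvec{u}_h^N) - \tfrac12\abs{a}_h(\bvec{u}_h^0,\bvec{u}_h^0)$ (the quadratic-form telescoping inequality $x^n(x^n-x^{n-1})\ge\tfrac12((x^n)^2-(x^{n-1})^2)$ applied entrywise), and likewise for the $C_0$ term. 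For the Darcy term, note $\tau\abs{c}_h(p_h^n,p_h^n) = \abs{c}_h(\delta_t^n s_{h\tau}, s_h^n)\cdot\tau$... more precisely since $s_h^n - s_h^{n-1} = \tau p_h^n$, one has $\tau^2\sum_n\abs{c}_h(p_h^n,p_h^n) = \sum_n\abs{c}_h(s_h^n-s_h^{n-1},p_h^n)$, and using symmetry of $\abs{c}_h$ together with the same telescoping trick on $\abs{c}_h(s_h^n,s_h^n)$ yields a lower bound $\tfrac12\abs{c}_h(s_h^N,s_h^N)$ up to the cross terms — this is the step flagged in the remark on symmetry of $\abs{c}_h$ (cf.\ \eqref{eq:abs.est:2}), and I would handle it by writing $\abs{c}_h(p_h^n,s_h^{n-1}) = \tau^{-1}\abs{c}_h(s_h^n-s_h^{n-1},s_h^{n-1})$ and telescoping.

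Next I would bound the right-hand side. The term $\tau\sum_n\abs{f}_h^n(\delta_t^n\bvec{u}_{h\tau})$ is controlled by $\sum_n\tau\norm[\bvec{\abs{U}},h,*]{\abs{f}_h^n}\norm[\bvec{\abs{U}},h]{\delta_t^n\bvec{u}_{h\tau}}$ and then Young's inequality, absorbing the $\norm[\bvec{\abs{U}},h]{\delta_t^n\bvec{u}_{h\tau}}^2$ part into a coercivity contribution from $\abs{a}_h$ applied to the increments — however, the left-hand side after telescoping only gives $\abs{a}_h(\bvec{u}_h^N,\bvec{u}_h^N)$, not a sum of $\norm{\delta_t^n\bvec{u}_{h\tau}}^2$. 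This is the key structural obstacle: to get the $\sum_n\tau\norm[\bvec{\abs{U}},h]{\bvec{u}_h^n}^2$ term on the left and also control the velocity increments, I expect the proof actually needs a second testing, namely testing \eqref{eq:biot:discrete:mechanics} directly with $\bvec{v}_h=\bvec{u}_h^n$ (giving $\underline{\alpha}\norm[\bvec{\abs{U}},h]{\bvec{u}_h^n}^2 \le \abs{f}_h^n(\bvec{u}_h^n) - \abs{b}_h(\bvec{u}_h^n,p_h^n)$), combined with the inf-sup condition \eqref{eq:abs.bh:inf-sup} used as in \eqref{eq:reg_est_inf-sup} to control $\norm[L^2(\Omega)]{\abs{r}_hp_h^n-\lproj[\Omega]{0}\abs{r}_hp_h^n}$ via $\norm[\bvec{\abs{U}},h,*]{\abs{b}_h(\cdot,p_h^n)} = \norm[\bvec{\abs{U}},h,*]{\abs{f}_h^n - \abs{a}_h(\bvec{u}_h^n,\cdot)} \le \norm[\bvec{\abs{U}},h,*]{\abs{f}_h^n} + \overline{\alpha}\norm[\bvec{\abs{U}},h]{\bvec{u}_h^n}$. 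So the argument interleaves: the energy identity from the first testing plus coercivity of $\abs{c}_h$ via \eqref{eq:abs.ch:coercivity} gives $\gamma\norm[\abs{P},h]{s_h^N}^2$ and the $C_0\norm[L^2(\Omega)]{\abs{r}_hp_h^N}^2$-type control; the second testing plus inf-sup gives the $\sum_n\tau\norm[\bvec{\abs{U}},h]{\bvec{u}_h^n}^2$ and the zero-mean pressure term; and these are reconciled using the initial-condition relation \eqref{eq:biot:discrete:ic} to handle $\abs{a}_h(\bvec{u}_h^0,\bvec{u}_h^0)$ and $C_0\norm{\abs{r}_hp_h^0}^2$.

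The handling of the initial values deserves care: the $-\tfrac12\abs{a}_h(\bvec{u}_h^0,\bvec{u}_h^0)$ and $-\tfrac{C_0}2\norm[L^2(\Omega)]{\abs{r}_hp_h^0}^2$ terms appear with the wrong sign and must be bounded in terms of $\norm[L^2(\Omega)]{\varphi^0}$; here one uses \eqref{eq:biot:discrete:ic}, which says $\phi_h^0$ is the $L^2$-projection of $\varphi^0$ onto $\abs{L}_h$, together with the definition of $\phi_h^n$ and the inf-sup/coercivity bounds — but note the right-hand side of \eqref{eq:abs:a-priori} contains $\tF\norm[L^2(\Omega)]{\varphi^0}^2$ rather than just $\norm[L^2(\Omega)]{\varphi^0}^2$, which strongly suggests that rather than controlling $\abs{a}_h(\bvec{u}_h^0,\bvec{u}_h^0)$ pointwise one instead never isolates $\bvec{u}_h^0$ at all: instead one rewrites everything via the time-integrated formulation \eqref{eq:biot:discrete2}, works with $s_h^n$ and the accumulated data $\abs{G}^n + \varphi^0$, and the factor $\tF = N\tau$ arises from summing $N$ copies of a bound involving $\norm[L^2(\Omega)]{\varphi^0}^2$. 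I would therefore actually run the whole computation on \eqref{eq:biot:discrete2}: test with $(\bvec{v}_h,q_h) = (\delta_t^n\bvec{u}_{h\tau}, p_h^n)$ or with $(\bvec{u}_h^n, p_h^n)$, exploiting that $\abs{A}_h((\bvec{u}_h^n,p_h^n),(\bvec{u}_h^n,p_h^n)) = \abs{a}_h(\bvec{u}_h^n,\bvec{u}_h^n) + C_0\norm[L^2(\Omega)]{\abs{r}_hp_h^n}^2 + \tau\abs{c}_h(p_h^n,p_h^n)$ (the $\abs{b}_h$ terms cancel by skew-symmetry of the coupling in $\abs{A}_h$), bound the right-hand side $\abs{F}_h^n$ which contains the telescoped Darcy term $-\abs{c}_h(s_h^{n-1},p_h^n)$, and then sum over $n$ using the $s_h^n$-telescoping. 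The main obstacle, in summary, is the bookkeeping that simultaneously (i) telescopes the Darcy term into $\gamma\norm[\abs{P},h]{s_h^N}^2$ using symmetry of $\abs{c}_h$, (ii) recovers coercivity control of $\sum_n\tau\norm[\bvec{\abs{U}},h]{\bvec{u}_h^n}^2$ and the zero-mean pressure term through the inf-sup condition without ever having a favorable-sign $\abs{a}_h$-telescoping for the full displacement, and (iii) tracks the constants to land exactly on $\mathcal{C}_1,\mathcal{C}_2,\mathcal{C}_3$ in \eqref{eq:C1.C2.C3} — the $C_0^{-1}$ in $\mathcal{C}_3$ being the usual price for splitting $g$ into its mean and zero-mean parts as in \eqref{eq:basic_reg_est1}, only needed when $C_0>0$ (with the convention $\mathcal{C}_3\cdot(\cdots)=0$ when $C_0=0$, since then \eqref{eq:abs:compatibility} forces $\lproj[\Omega]{0}\abs{G}^n = \lproj[\Omega]{0}\varphi^0 = 0$).
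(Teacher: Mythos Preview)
Your first paragraph is a dead end and you correctly diagnose why: testing \eqref{eq:biot:discrete:mechanics} with $\bvec{v}_h=\delta_t^n\bvec{u}_{h\tau}$ produces an $\abs{a}_h$-telescope that controls only $\norm[\bvec{\abs{U}},h]{\bvec{u}_h^N}$, not the $\ell^2$-in-time sum $\sum_n\tau\norm[\bvec{\abs{U}},h]{\bvec{u}_h^n}^2$ required on the left of \eqref{eq:abs:a-priori}, and it also generates initial terms $\abs{a}_h(\bvec{u}_h^0,\bvec{u}_h^0)$ that \eqref{eq:biot:discrete:ic} does not let you bound. The paper never does this; discard it entirely.

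What the paper does is exactly the route you sketch in your last paragraph, and only that route: test the time-integrated formulation \eqref{eq:biot:discrete2} with $(\bvec{v}_h,q_h)=(\bvec{u}_h^n,p_h^n)$ (not $(\delta_t^n\bvec{u}_{h\tau},p_h^n)$), so that the $\abs{b}_h$ terms cancel and coercivity of $\abs{a}_h$ gives $\underline{\alpha}\norm[\bvec{\abs{U}},h]{\bvec{u}_h^n}^2$ directly at each $n$. One point where you are hazy: the Darcy telescoping is obtained by moving the term $-\abs{c}_h(s_h^{n-1},p_h^n)$ from $\abs{F}_h^n$ to the left-hand side and combining it with $\tau\abs{c}_h(p_h^n,p_h^n)$ to form $\abs{c}_h(s_h^n,p_h^n)=\abs{c}_h(s_h^n,\delta_t^n s_{h\tau})$; then symmetry of $\abs{c}_h$ gives the identity
\[
\abs{c}_h(s_h^n,\delta_t^n s_{h\tau})=\frac1{2\tau}\big(\abs{c}_h(s_h^n,s_h^n)-\abs{c}_h(s_h^{n-1},s_h^{n-1})\big)+\frac{\tau}{2}\abs{c}_h(\delta_t^n s_{h\tau},\delta_t^n s_{h\tau}),
\]
which is \eqref{eq:abs.est:2}; the second term is nonnegative and dropped. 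There are no residual ``cross terms'' to handle separately. After multiplying by $\tau$ and summing, this yields \eqref{eq:abs.err:basic}; the inf-sup estimate \eqref{eq:abs.bh:inf-sup} is used beforehand exactly as you describe to produce \eqref{eq:abs.est:pressure}, and the right-hand side is bounded via the mean/zero-mean splitting with Young weights $\lambda_1=\underline{\alpha}/2$, $\lambda_2=\beta^2\underline{\alpha}/(4\overline{\alpha}^2)$, $\lambda_3=C_0$, which produces precisely $\mathcal{C}_1,\mathcal{C}_2,\mathcal{C}_3$. Your reading of the $\tF$ factor and of the $C_0=0$ convention is correct.
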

\begin{remark}[A priori estimate for $C_0=0$]
  When $C_0=0$, the a priori bound \eqref{eq:abs:a-priori} remains valid with the convention that $C_0^{-1}\sum_{n=1}^N\tau\norm[L^2(\Omega)]{\lproj[\Omega]{0}\abs{G}^n}^2=0$ and $C_0^{-1}\norm[L^2(\Omega)]{\lproj[\Omega]{0}\varphi^0}^2=0$.
  These conventions are justified by the compatibility conditions \eqref{eq:abs:compatibility}.
\end{remark}
\begin{proof}[Proof of Lemma \ref{lem:abs:a-priori}]
  We start by obtaining an estimate for the pore pressure reconstruction.
  Using the inf-sup condition \eqref{eq:abs.bh:inf-sup}, recalling the definition \eqref{eq:abs:norm.Uh*} of the dual norm, and expressing $\abs{b}_h(\uvec{v}_h,p_h^n)$ according to \eqref{eq:biot:discrete:mechanics}, we have, for all $1\le n\le N$,
  \[
  \beta\norm[L^2(\Omega)]{\abs{r}_hp_h^n-\lproj[\Omega]{0}\abs{r_h}p_h^n}
  \le\!\sup_{\bvec{v}_h\in\bvec{\abs{U}}_h\setminus\{\bvec{0}\}}\!\frac{\abs{b}_h(\bvec{v}_h,p_h^n)}{\norm[\bvec{\abs{U}},h]{\bvec{v}_h}}
  =\!\sup_{\bvec{v}_h\in\bvec{\abs{U}}_h\setminus\{\bvec{0}\}}\!\frac{\abs{f}_h^n(\bvec{v}_h) - \abs{a}_h(\bvec{u}_h^n,\bvec{v}_h)}{\norm[\bvec{\abs{U}},h]{\bvec{v}_h}}
  \le\norm[\bvec{\abs{U}},h,*]{\abs{f}_h^n} + \overline{\alpha}\norm[\bvec{\abs{U}},h]{\bvec{u}_h^n},
  \]
  where we have used again the definition \eqref{eq:abs:norm.Uh*} of the dual norm and the boundedness \eqref{eq:abs.ah:boundedness} of the bilinear form $\abs{a}_h$ to conclude.
  Squaring the above expression and recalling that $(a+b)^2\le 2(a^2+b^2)$ for any $a,b\in\Real$, we get
  \begin{equation}\label{eq:abs.est:pressure}
    \beta^2\norm[L^2(\Omega)]{\abs{r}_hp_h^n-\lproj[\Omega]{0}\abs{r_h}p_h^n}^2
    \le2\left(
    \norm[\bvec{\abs{U}},h,*]{\abs{f}_h^n}^2 + \overline{\alpha}^2\norm[\bvec{\abs{U}},h]{\bvec{u}_h^n}^2
    \right)\qquad\forall 1\le n\le N.
  \end{equation}

  Next, for any $1\le n\le N$, taking $(\bvec{v}_h, q_h)=(\bvec{u}_h^n, p_h^n)$ in \eqref{eq:biot:discrete2}, expanding $s_h^{n-1}$ according to its definition, rearranging, and using the coercivity \eqref{eq:abs.ah:coercivity} of $\abs{a}_h$, we obtain
  \begin{equation}\label{eq:abs.est:1}
     \underline{\alpha}\norm[\bvec{\abs{U}},h]{\bvec{u}_h^n}^2
     + C_0\norm[L^2(\Omega)]{\abs{r}_hp_h^n}^2
     + \sum_{i=1}^n\tau\abs{c}_h(p_h^i,p_h^n)
     =
     \abs{f}_h^n(\bvec{u}_h^n) 
     +(\abs{G}^n + \varphi^0,\abs{r}_hp_h^n)_{L^2(\Omega)}.
  \end{equation}
  Using the linearity of $\abs{c}_h$, the definition of $s_h^n$, and observing that $p_h^n=\delta_t^ns_{h\tau}$, we infer
  \begin{equation}\label{eq:abs.est:2}
    \sum_{i=1}^n\tau\abs{c}_h(p_h^i,p_h^n)
    =\abs{c}_h(s_h^n,\delta_t^n s_{h\tau})
    =\frac1{2\tau}\left(
    \abs{c}_h(s_h^n,s_h^n) + \tau^2\abs{c}_h(\delta_t^n s_{h\tau},\delta_t^n s_{h\tau}) - \abs{c}_h(s_h^{n-1},s_h^{n-1})
    \right),
  \end{equation}
  where the conclusion follows from the symmetry of $\abs{c}_h$.
  Plugging \eqref{eq:abs.est:2} into \eqref{eq:abs.est:1}, and further observing that $\abs{c}_h(\delta_t^n s_{h\tau},\delta_t^n s_{h\tau})\ge 0$ by the coercivity \eqref{eq:abs.ch:coercivity} of $\abs{c}_h$, we get, for any $1\le n\le N$,
  \[
    \underline{\alpha}\norm[\bvec{\abs{U}},h]{\bvec{u}_h^n}^2
    + C_0\norm[L^2(\Omega)]{\abs{r}_hp_h^n}^2
    + \frac1{2\tau}\left(\abs{c}_h(s_h^n,s_h^n) - \abs{c}_h(s_h^{n-1},s_h^{n-1}) \right)
    \le
    \abs{f}_h^n(\bvec{u}_h^n)
    + (\abs{G}^n + \varphi^0,\abs{r}_hp_h^n)_{L^2(\Omega)}.
  \]
  We  now multiply by $\tau$, sum over $1\le n\le N$, telescope out the appropriate terms, recall that $s_h^0=0$ by definition, and use the coercivity \eqref{eq:abs.ch:coercivity} of $\abs{c}_h$ to infer
  \begin{equation}\label{eq:abs.err:basic}
    \underline{\alpha}\sum_{n=1}^N\tau\norm[\bvec{\abs{U}},h]{\bvec{u}_h^n}^2
    + C_0\sum_{n=1}^N\tau\norm[L^2(\Omega)]{\abs{r}_hp_h^n}^2
    + \frac{\gamma}{2}\norm[\abs{P},h]{s_h^N}^2
    \le
    \sum_{n=1}^N\tau\abs{f}_h^n(\bvec{u}_h^n)
    + \sum_{n=1}^N\tau(\abs{G}^n + \varphi^0,\abs{r}_hp_h^n)_{L^2(\Omega)}.
  \end{equation}
  
  We next proceed to bound the terms in the right-hand side, denoted for short $\term_1$ and $\term_2$.
  For the first term, using the definition \eqref{eq:abs:norm.Uh*} of dual norm  followed by a generalized Young inequality, we can write, for any $\lambda_1>0$,
  \begin{equation}\label{eq:abs.est:T1}
    \term_1\le\frac{\lambda_1}{2}\sum_{n=1}^N\tau\norm[\bvec{\abs{U}},h]{\bvec{u}_h^n}^2
    + \frac{1}{2\lambda_1}\sum_{n=1}^N\tau\norm[\bvec{\abs{U}},h,*]{\abs{f}_h^n}^2.
  \end{equation}
  The second term requires a careful treatment in order to achieve robustness for $C_0=0$. The starting point is the following splitting:
  \[
  \term_2 =
  \sum_{n=1}^N\tau(\abs{G}^n+\varphi^0,\abs{r}_hp_h^n-\lproj[\Omega]{0}\abs{r}_hp_h^n)_{L^2(\Omega)}
  + \sum_{n=1}^N\tau(\abs{G}^n+\varphi^0,\lproj[\Omega]{0}\abs{r}_hp_h^n)_{L^2(\Omega)}
  \eqcolon \term_{2,1} + \term_{2,2}.
  \]
  For the first contribution, using the Cauchy--Schwarz, generalized Young, and triangle inequalities and continuing with \eqref{eq:abs.est:pressure}, we have, for any $\lambda_2>0$,
  \begin{equation}\label{eq:abs.est:T21}
    \term_{2,1}
    \le\frac{1}{\lambda_2}\left(
    \sum_{n=1}^N\tau\norm[L^2(\Omega)]{\abs{G}^n}^2 + \tF\norm[L^2(\Omega)]{\varphi^0}^2
    \right)
    + \lambda_2 \beta^{-2}\sum_{n=1}^N\tau\left(
    \norm[\bvec{\abs{U}},h,*]{\abs{f}_h^n}^2
    + \overline{\alpha}^2\norm[\bvec{\abs{U}},h]{\bvec{u}_h^n}^2
    \right).
  \end{equation}
  For the second contribution, noticing that, for any $\xi,\eta\in L^2(\Omega)$, $(\lproj[\Omega]{0}\xi,\eta)_{L^2(\Omega)}=(\xi,\lproj[\Omega]{0}\eta)_{L^2(\Omega)}$, we can write
  \begin{equation}\label{eq:abs.est:T22}
    \term_{2,2}
    = \sum_{n=1}^N\tau(\lproj[\Omega]{0}\abs{G}^n+\lproj[\Omega]{0}\varphi^0,\abs{r}_hp_h^n)_{L^2(\Omega)}
    \le\frac{1}{\lambda_3}\left(
    \sum_{n=1}^N\tau\norm[L^2(\Omega)]{\lproj[\Omega]{0}\abs{G}^n}^2
    \hspace{-0.5ex}+ \tF\norm[L^2(\Omega)]{\lproj[\Omega]{0}\varphi^0}^2
    \right) + \frac{\lambda_3}{2}\sum_{n=1}^N\tau\norm[L^2(\Omega)]{\abs{r}_hp_h^n}^2,
  \end{equation}
  where we have used again the Cauchy--Schwarz, generalized Young, and triangle inequalities, the second with weight $\lambda_3>0$.

  Plug, into \eqref{eq:abs.err:basic}, \eqref{eq:abs.est:T1} with $\lambda_1=\underline{\alpha}/2$, \eqref{eq:abs.est:T21} with $\lambda_2=(\beta^2\underline{\alpha})/(4\overline{\alpha}^2)$, and \eqref{eq:abs.est:T22} with $\lambda_3=C_0$ to obtain, after multiplying by 2 the resulting inequality,
  \begin{multline}\label{eq:abs:a-priori:simplified}
    \underline{\alpha}\sum_{n=1}^N\tau\norm[\bvec{\abs{U}},h]{\bvec{u}_h^n}^2
    + C_0\sum_{n=1}^N\tau\norm[L^2(\Omega)]{\abs{r}_hp_h^n}^2
    + \gamma\norm[\abs{P},h]{s_h^N}^2
    \le
    \frac{4\overline{\alpha}^2+\underline{\alpha}^2}{2\underline{\alpha}\overline{\alpha}^2}\sum_{n=1}^N\tau\norm[\bvec{\abs{U}},h,*]{\abs{f}_h^n}^2
    \\
    + \frac{8\overline{\alpha}^2}{\underline{\alpha}\beta^2}\left(
    \sum_{n=1}^N\tau\norm[L^2(\Omega)]{\abs{G}^n}^2
    + \tF\norm[L^2(\Omega)]{\varphi^0}^2
    \right)
    + \frac{2}{C_0}\left(
    \sum_{n=1}^N\tau\norm[L^2(\Omega)]{\lproj[\Omega]{0}\abs{G}^n}^2
    + \tF\norm[L^2(\Omega)]{\lproj[\Omega]{0}\varphi^0}^2
    \right).
  \end{multline}
  Multiplying \eqref{eq:abs.est:pressure} by $\tau\underline{\alpha}/(2\overline{\alpha}^2)$, summing over $1\le n\le N$, using \eqref{eq:abs:a-priori:simplified} to further bound the second term in the right-hand side, and adding the resulting inequality to \eqref{eq:abs:a-priori:simplified} concludes the proof.
\end{proof}

\subsection{Error estimates}

We next derive a basic error estimate, obtained comparing the solution of the discrete problem \eqref{eq:biot:discrete} with an interpolate of the exact solution, whose features are summarised in the following assumption.

\begin{assumption}[Interpolate of the exact solution]\label{ass:interpolate}
  With the same notations as in Assumption \ref{ass:discrete.setting}, we assume given the following objects:
  \begin{enumerate}
  \item \emph{Displacement interpolate.} $\hat{\bvec{u}}_{h\tau}\coloneq(\hat{\bvec{u}}_h^n)_{0\le n\le N}\in\bvec{\abs{U}}_h^{N+1}$ is a family of elements of the discrete displacement space such that, for all $0\le n\le N$, $\hat{\bvec{u}}_h^n$ encodes meaningful information on the exact displacement field at time $t^n$.
  \item \emph{Pore pressure interpolate.}
    $\mathfrak{p}_h\in L^2(0,\tF;\abs{P}_h)$ solves, for a.e. $t\in(0,\tF)$, the problem
    \begin{equation}\label{eq:hat.p}
      \begin{aligned}
        \abs{c}_h(\mathfrak{p}_h(t),q_h) &= 
        -(\DIV(\diff\GRAD p(\cdot,t)),\abs{r}_hq_h)_{L^2(\Omega)}\qquad\forall q_h\in\abs{P}_h
        \\
        \int_\Omega(\abs{r}_h\mathfrak{p}_h)(\bvec{x},t)\ud\bvec{x} &=\int_\Omega p(\bvec{x},t)\ud\bvec{x}.
      \end{aligned}
    \end{equation}
    For all $1\le n\le N$, we assume given a linear time interpolator $\abs{I}^n:L^2(t^{n-1},t^n;V)\to V$, with $V$ a generic vector 
    space, and we set 
    \[
    \hat{p}_h^n\coloneq \abs{I}^n\mathfrak{p}_h.
    \]
    We also take $\hat{p}_h^0\in \abs{P}_h$ such that 
    $(\abs{r}_h\hat{p}_h^0,q_h)_{L^2(\Omega)}= (p(0),q_h)_{L^2(\Omega)}$ for all $q_h\in\abs{L}_h$, and define 
    $$
    \hat{p}_{h\tau}\coloneq(\hat{p}_h^n)_{0\le n\le N}\in\abs{P}_h^{N+1}.
    $$
  \end{enumerate}
\end{assumption}
The errors $\bvec{e}_{h\tau}\coloneq(\bvec{e}_h^n)_{0\le n\le N}\in\bvec{\abs{U}}_h^{N+1}$ on the displacement and $\epsilon_{h\tau}\coloneq(\epsilon_h^n)_{0\le n\le N}\in\abs{P}_h^{N+1}$ on the pore pressure are such that, for all $0\le n\le N$,
\begin{equation}\label{eq:errors}
  \bvec{e}_h^n\coloneq\bvec{u}_h^n-\hat{\bvec{u}}_h^n,\qquad
  \epsilon_h\coloneq p_h^n-\hat{p}_h^n.
\end{equation}

\begin{lemma}[Abstract error estimate]\label{lem:abs:err.est}
  Under Assumptions \ref{ass:discrete.setting}, \ref{ass:stability}, and \ref{ass:interpolate}, we have the following estimate for the errors defined by \eqref{eq:errors}:
  \begin{multline}\label{eq:abs:err.est}
    \underline{\alpha}\sum_{n=1}^N\tau\norm[\bvec{\abs{U}},h]{\bvec{e}_h^n}^2
    + \sum_{n=1}^N\tau\left(
    C_0\norm[L^2(\Omega)]{\abs{r}_h\epsilon_h^n}^2
    + \frac{\beta^2\underline{\alpha}}{2\overline{\alpha}^2}\norm[L^2(\Omega)]{\abs{r}_h\epsilon_h^n-\lproj[\Omega]{0}\abs{r}_h\epsilon_h^n}^2
    \right)
    + \gamma\norm[\abs{P},h]{z_h^N}^2
    \\
    \le
    \mathcal{C}_1\sum_{n=1}^N\tau\norm[\bvec{\abs{U}},h,*]{\mathcal{E}_{\bvec{\abs{U}},h}^n}^2
    + \mathcal{C}_2\left(
    \sum_{n=1}^N\tau\norm[L^2(\Omega)]{\mathcal{E}_{\abs{P},h\tau}^n}^2
    + \tF\norm[L^2(\Omega)]{\mathcal{E}_{0,h}}^2
    \right)
    \\
    + \mathcal{C}_3\left(
    \sum_{n=1}^N\tau\norm[L^2(\Omega)]{\lproj[\Omega]{0}\mathcal{E}_{\abs{P},h\tau}^n}^2
    + \tF\norm[L^2(\Omega)]{\lproj[\Omega]{0}\mathcal{E}_{0,h}}^2
    \right),
  \end{multline}
  where $\mathcal{C}_1$, $\mathcal{C}_2$, and $\mathcal{C}_3$ are defined by \eqref{eq:C1.C2.C3},
  $z_h^0\coloneq 0$, $z_h^n\coloneq\sum_{i=1}^n\tau\epsilon_h^i$ for all $1\le n\le N$,
  and the residuals $\mathcal{E}_{0,h}^n\in\abs{L}_h$ on the initial condition,
  $(\mathcal{E}_{\bvec{\abs{U}},h}^n:\bvec{\abs{U}}_h\to\Real)_{1\le n\le N}$ on the mechanical equilibrium,
  and $(\mathcal{E}_{\abs{P},h\tau}^n)_{1\le n\le N}\in\abs{L}_h^N$ on the flow equation are such that
  $$
    (\mathcal{E}_{0,h},\abs{r}_hq_h)_{L^2(\Omega)}\coloneq
    (\varphi^0,\abs{r}_hq_h)_{L^2(\Omega)}
    - C_0(\abs{r}_h\hat{p}_h^0,\abs{r}_hq_h)_{L^2(\Omega)}
    + \abs{b}_h(\hat{\bvec{u}}_h^0,q_h)
    \qquad\forall q_h\in\abs{P}_h
  $$
  and, for all $1\le n\le N$,
  \begin{alignat}{2}
    \label{eq:abs:EUhn}
    \mathcal{E}_{\bvec{\abs{U}},h}^n(\bvec{v}_h) &\coloneq
    \abs{f}_h^n(\bvec{v}_h)
    - \abs{a}_h(\hat{\bvec{u}}_h^n,\bvec{v}_h)
    - \abs{b}_h(\bvec{v}_h,\hat{p}_h^n)
    &\qquad&\forall\bvec{v}_h\in\bvec{\abs{U}}_h,
    \\ \nonumber
    (\mathcal{E}_{\abs{P},h\tau}^n,\abs{r}_hq_h)_{L^2(\Omega)} &\coloneq
    \sum_{i=1}^n\tau\Big[
      (\abs{g}^i + \DIV(\diff\GRAD\abs{I}^ip),\abs{r}_hq_h)_{L^2(\Omega)}
      \\ \label{eq:abs:EPhtaun}    
      &\qquad\qquad - C_0(\abs{r}_h\delta_t^i\hat{p}_{h\tau},\abs{r}_h q_h)_{L^2(\Omega)}
      + \abs{b}_h(\delta_t^i\hat{\bvec{u}}_{h\tau},q_h)
      \Big]
    &\qquad&\forall q_h\in\abs{P}_h.
  \end{alignat}
\end{lemma}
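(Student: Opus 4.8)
The plan is to recognise that the errors $(\bvec{e}_{h\tau},\epsilon_{h\tau})$ solve a discrete problem of exactly the same structure as \eqref{eq:biot:discrete}, in which the data $(\abs{f}_h^n)_{1\le n\le N}$, $(\abs{g}^n)_{1\le n\le N}$, and $\varphi^0$ are replaced, respectively, by the residuals $(\mathcal{E}_{\bvec{\abs{U}},h}^n)_{1\le n\le N}$, a family $(\widetilde{\abs{g}}^n)_{1\le n\le N}$ of flow residuals whose time-cumulated partial sums reproduce the $(\mathcal{E}_{\abs{P},h\tau}^n)_{1\le n\le N}$, and $\mathcal{E}_{0,h}$. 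Once this is in place, the bound \eqref{eq:abs:err.est} is obtained by applying Lemma \ref{lem:abs:a-priori} verbatim to this error problem, the constants $\mathcal{C}_1,\mathcal{C}_2,\mathcal{C}_3$ of \eqref{eq:C1.C2.C3} carrying over unchanged and $z_h^N$ playing the role of $s_h^N$.

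Concretely, I would proceed equation by equation. For the mechanical equilibrium, subtracting the definition \eqref{eq:abs:EUhn} of $\mathcal{E}_{\bvec{\abs{U}},h}^n$ from \eqref{eq:biot:discrete:mechanics} immediately gives $\abs{a}_h(\bvec{e}_h^n,\bvec{v}_h)+\abs{b}_h(\bvec{v}_h,\epsilon_h^n)=\mathcal{E}_{\bvec{\abs{U}},h}^n(\bvec{v}_h)$ for all $\bvec{v}_h\in\bvec{\abs{U}}_h$ and all $1\le n\le N$; subtracting the defining relation of $\mathcal{E}_{0,h}$ from \eqref{eq:biot:discrete:ic} likewise yields $C_0(\abs{r}_h\epsilon_h^0,\abs{r}_hq_h)_{L^2(\Omega)}-\abs{b}_h(\bvec{e}_h^0,q_h)=(\mathcal{E}_{0,h},\abs{r}_hq_h)_{L^2(\Omega)}$ for all $q_h\in\abs{P}_h$. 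For the flow equation I would work with its time-integrated form, as was done to pass from \eqref{eq:biot:discrete:flow} to \eqref{eq:biot:discrete2}. The crucial point is the treatment of the Darcy contribution of the interpolate: by the linearity of the time interpolator $\abs{I}^i$ together with the first relation in \eqref{eq:hat.p}, one has $\abs{c}_h(\hat p_h^i,q_h)=\abs{c}_h(\abs{I}^i\mathfrak{p}_h,q_h)=\abs{I}^i\big[\abs{c}_h(\mathfrak{p}_h(\cdot),q_h)\big]=-(\DIV(\diff\GRAD\abs{I}^ip),\abs{r}_hq_h)_{L^2(\Omega)}$, where $\DIV(\diff\GRAD\abs{I}^ip)\in L^2(\Omega)$ is legitimate thanks to the regularity of the flux noted in the Remark following Theorem \ref{thm:reg_est}. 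Using this identity, subtracting from the cumulated flow equation, and collecting terms, one finds that the error pair satisfies $C_0(\abs{r}_h\delta_t^n\epsilon_{h\tau},\abs{r}_hq_h)_{L^2(\Omega)}-\abs{b}_h(\delta_t^n\bvec{e}_{h\tau},q_h)+\abs{c}_h(\epsilon_h^n,q_h)=(\widetilde{\abs{g}}^n,\abs{r}_hq_h)_{L^2(\Omega)}$ with $\widetilde{\abs{g}}^n\coloneq\abs{g}^n+\DIV(\diff\GRAD\abs{I}^np)-C_0\abs{r}_h\delta_t^n\hat p_{h\tau}-\abs{D}_h\delta_t^n\hat{\bvec{u}}_{h\tau}\in L^2(\Omega)$; moreover, the definition \eqref{eq:abs:EPhtaun} is designed precisely so that $\sum_{i=1}^n\tau\,\widetilde{\abs{g}}^i$ and $\mathcal{E}_{\abs{P},h\tau}^n$ have the same $L^2(\Omega)$-pairing against every $\abs{r}_hq_h$, i.e.\ $\mathcal{E}_{\abs{P},h\tau}^n$ is the $L^2(\Omega)$-orthogonal projection of $\sum_{i=1}^n\tau\,\widetilde{\abs{g}}^i$ onto $\abs{L}_h$. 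When $C_0=0$, I would in addition use \eqref{eq:compatibility}, \eqref{eq:abs:compatibility}, the zero-mean constraint \eqref{eq:biot:discrete:zero.average.p}, and the second relation in \eqref{eq:hat.p} to check that $\int_\Omega\abs{r}_h\epsilon_h^n=0$ and that the residual data meet the requirements under which the $C_0=0$ convention of Lemma \ref{lem:abs:a-priori} is stated.

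With the error problem thus recast in the form \eqref{eq:biot:discrete}, Lemma \ref{lem:abs:a-priori} delivers \eqref{eq:abs:err.est} directly. The one place that requires genuine care — and the step I would regard as the main obstacle — is the bookkeeping for the flow equation: verifying the commutation of $\abs{I}^i$ with $\abs{c}_h(\cdot,q_h)$ in \eqref{eq:hat.p}, and then arranging that the data fed to Lemma \ref{lem:abs:a-priori} are taken as their $\abs{L}_h$-representatives (replacing $\widetilde{\abs{g}}^n$ by its $L^2(\Omega)$-projection onto $\abs{L}_h$ alters neither the error pair nor any pairing appearing in the proof of that lemma), so that the quantity "$\abs{G}^n$" of the a priori estimate becomes exactly $\mathcal{E}_{\abs{P},h\tau}^n$ and the right-hand side is expressed through $\norm[L^2(\Omega)]{\mathcal{E}_{\abs{P},h\tau}^n}$ and $\norm[L^2(\Omega)]{\lproj[\Omega]{0}\mathcal{E}_{\abs{P},h\tau}^n}$ rather than through a larger $L^2(\Omega)$ representative. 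By contrast, the mechanics equation and the initial condition transcribe directly from the definitions of $\mathcal{E}_{\bvec{\abs{U}},h}^n$ and $\mathcal{E}_{0,h}$ and require no further work.
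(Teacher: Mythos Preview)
Your proposal is correct and follows essentially the same approach as the paper: show that $(\bvec{e}_{h\tau},\epsilon_{h\tau})$ solves problem \eqref{eq:biot:discrete} with data replaced by the residuals $\mathcal{E}_{\bvec{\abs{U}},h}^n$, $\mathcal{E}_{0,h}$, and a flow residual $\varepsilon_{\abs{P},h}^n\in\abs{L}_h$ (your $\widetilde{\abs{g}}^n$ projected onto $\abs{L}_h$) whose cumulated sums give $\mathcal{E}_{\abs{P},h\tau}^n$, then apply Lemma \ref{lem:abs:a-priori}. You actually spell out the two points the paper leaves implicit---the commutation $\abs{c}_h(\abs{I}^i\mathfrak{p}_h,q_h)=-(\DIV(\diff\GRAD\abs{I}^ip),\abs{r}_hq_h)_{L^2(\Omega)}$ via linearity of $\abs{I}^i$ and \eqref{eq:hat.p}, and the passage to $\abs{L}_h$-representatives so that $\abs{G}^n=\mathcal{E}_{\abs{P},h\tau}^n$---so your write-up is, if anything, more complete than the paper's.
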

\begin{proof}
  The errors $(\bvec{e}_{h\tau},\epsilon_{h\tau})\in\bvec{\abs{U}}_h^{N+1}\times\abs{P}_h^{N+1}$ solve problem \eqref{eq:biot:discrete} with $\varphi^0$ replaced by $\mathcal{E}_{0,h}$ and, for all $1\le n\le N$, $\abs{f}_h^n$ replaced by $\mathcal{E}_{\bvec{\abs{U}},h}^n$ and $\abs{g}^n$ replaced by $\varepsilon_{\abs{P},h}^n\in\abs{L}_h$ such that
  \[
  (\varepsilon_{\abs{P},h}^n,\abs{r}_hq_h)_{L^2(\Omega)}
  = (\abs{g}^n + \DIV(\diff\GRAD\abs{I}^n p),\abs{r}_hq_h)_{L^2(\Omega)}
  - C_0(\abs{r}_h\delta_t^n\hat{p}_{h\tau},\abs{r}_hq_h)_{L^2(\Omega)}
  + \abs{b}_h(\delta_t^n\hat{\bvec{u}}_{h\tau},q_h)
  \quad\forall q_h\in\abs{P}_h.
  \]
  The conclusion follows applying the abstract a priori estimate \eqref{eq:abs:a-priori} to this error equation and observing that, with the above choice, $\mathcal{E}_{\abs{P},h\tau}^n$ plays the role of $\abs{G}^n$.
\end{proof}
\begin{remark}[Displacement interpolate and discretisation of the loading and source terms]
  Having introduced, for any $1\le n\le N$, the time interpolator $\abs{I}^n$, a natural (but not mandatory) choice for the displacement interpolate, the discrete loading term, and the discrete source term consists in setting $\hat{\bvec{u}}_h^n\coloneq\abs{I}^n(\bvec{\abs{I}}_{\bvec{\abs{U}},h}\bvec{u})$, $\bvec{\abs{f}}^n=\abs{I}^n\bvec{f}$, and $\abs{g}^n=\abs{I}^n g$, with $\bvec{\abs{I}}_{\bvec{\abs{U}},h}$ denoting a spatial interpolator on $\bvec{\abs{U}}_h$.
  With this choice, the expression \eqref{eq:abs:EPhtaun} of the residual on the flow equation can be simplified observing that $\abs{g}^i+\DIV(\diff\GRAD\abs{I}^ip) = C_0\abs{I}^i(d_t p) + \abs{I}^i\left(d_t(\DIV\bvec{u})\right)$.
\end{remark}


\section{Discrete setting}\label{sec:discrete.setting}

In this section, we describe the discrete setting for HHO space discretisations of problem \eqref{eq:biot:strong}.

\subsection{Space mesh}

We consider spatial meshes corresponding to couples $\Mh\coloneq(\Th,\Fh)$, where $\Th$ is a finite collection of polyhedral elements such that $h\coloneq\max_{T\in\Th}h_T>0$ with $h_T$ denoting the diameter of $T$, while $\Fh$ is a finite collection of planar faces. It is assumed henceforth that the mesh $\Mh$ matches the geometrical requirements detailed in \cite[Chapters 1 and 7]{Di-Pietro.Droniou:20}.
This covers, essentially, any reasonable partition of $\Omega$ into polyhedral sets, not necessarily convex.
For every mesh element $T\in\Th$, we denote by $\Fh[T]$ the subset of $\Fh$ containing the faces that lie on the boundary $\partial T$ of $T$.
For any mesh element $T\in\Th$ and each face $F\in\Fh[T]$, $\normal_{TF}$ is the unit vector normal to $F$ pointing out of $T$.
Boundary faces lying on $\partial\Omega$ and internal faces contained in
$\Omega$ are collected in the sets $\Fhb$ and $\Fhi$, respectively.
For any $F\in\Fhi$, we denote by $T_1$ and $T_2$ the elements of $\Th$ such that $F\subset\partial T_1\cap\partial T_2$.
The numbering of $T_1$ and $T_2$ is assumed arbitrary but fixed, and we set $\normal_F\coloneq\normal_{T_1F}$.
Since we focus on $h$-convergence, it is assumed that the mesh belongs to a regular sequence in the sense of \cite[Definition 1.9]{Di-Pietro.Droniou:20}.

\subsection{Assumptions on the data and mesh compliance}

For the sake of simplicity, throughout the rest of the paper we assume homogeneous and isotropic mechanical properties, so that \eqref{eq:C.lame} holds.
We additionally assume that the permeability coefficient $\diff$ is piecewise constant on a fixed partition $P_\Omega=\{\Omega_i\}_{1\le i\le N_\Omega}$ of the domain into polyhedra.
It is assumed that the mesh $\Mh$ is compliant with the partition $P_\Omega$, that is, for every mesh element $T\in\Th$ there exists a unique index $1\le i\le N_\Omega$ such that $T\subset\Omega_i$.
This implies, in particular, that
\[
\diff_T\coloneq\diff_{|T}\in\Real_\symm^{d\times d}\qquad\forall T\in\Th.
\]
We respectively denote by $\overline{K}_T$ and $\underline{K}_T$ the largest and smallest eigenvalue of $\diff_T$, and define, for all $T\in\Th$, the local anisotropy ratio
\[
\rho_T\coloneq\frac{\overline{K}_T}{\underline{K}_T}.
\]
The global anisotropy ratio is defined as follows (notice that this value is independent of the mesh):
\[
\rho\coloneq\max_{T\in\Th}\rho_T.
\]

\subsection{Local and broken spaces and projectors}

Let a polynomial degree $l\ge 0$ be fixed.
For all $X\in\Th\cup\Fh$, denote by $\Poly{l}(X)$ the space spanned by the restriction to $X$ of $d$-variate polynomials of total degree $\le l$, and let $\lproj[X]{l}:L^1(X)\to\Poly{l}(X)$ be the corresponding $L^2$-orthogonal projector such that, for any $v\in L^1(X)$,
\[
(\lproj[X]{l}v-v,w)_{L^2(X)}=0\qquad\forall w\in\Poly{l}(X).
\]
The vector version $\vlproj[X]{l}:L^1(X)^d\to\Poly{l}(X)^d$, is obtained applying $\lproj[X]{l}$ component-wise.
For any $T\in\Th$ we will also need, in what follows, the space $\Polys{l}(T)$ of $d\times d$ symmetric matrix-valued fields on $T$ with polynomial components.

At the global level, we introduce the broken polynomial space
\[
\Poly{l}(\Th)\coloneq\left\{v\in L^1(\Omega)\st v_{|T}\in\Poly{l}(T)\quad\forall T\in\Th\right\}
\]
along with the corresponding vector version $\Poly{l}(\Th)^d$ and the space $\Polys{l}(\Th)$ of $d\times d$ symmetric matrix-valued broken polynomial fields.
The $L^2$-orthogonal projector on $\Poly{l}(\Th)$ is $\lproj{l}:L^1(\Omega)\to\Poly{l}(\Th)$ such that, for all $v\in L^1(\Omega)$,
\[
  (\lproj{l}v)_{|T}=\lproj[T]{l}v_{|T}\qquad\forall T\in\Th.
\]
The vector version $\vlproj{l}$ of the global $L^2$-orthogonal projector is obtained applying $\lproj{l}$ component-wise.

Broken polynomial spaces constitute special instances of the broken Sobolev spaces
\[
H^m(\Th)\coloneq\left\{v\in L^2(\Omega)\st v_{|T}\in H^m(T)\quad\forall T\in\Th\right\},
\]
which will be used to express the regularity requirements on the exact solution in the error estimates.
Hereafter, we denote by $\GRADh:H^1(\Th)\to L^2(\Omega)^d$ the broken gradient operator such that, for any $v\in H^1(\Th)$, $(\GRADh v)_{|T}\coloneq\GRAD v_{|T}$ for all $T\in\Th$.
Functions in $H^1(\Th)$ may exhibit jumps across interfaces.
To account for this fact, for all $F\in\Fhi$ with neighbouring elements $T_1$ and $T_2$ (remember that the ordering is fixed so that $\normal_F$ points out of $T_1$), we introduce the jump operator such that, for all $\varphi\in H^1(\Th)$,
\[
\jump{\varphi}\coloneq\left(\varphi_{|T_1}\right)_{|F} - \left(\varphi_{|T_2}\right)_{|F}.
\]
When applied to vector-valued functions, the jump operator acts element-wise.


\section{Hybrid High-Order discretisations}\label{sec:hho}

Throughout the rest of this section, $k\ge 0$ denotes the polynomial degree for the space approximation and the inequality $a\lesssim b$ stands for $a\le Cb$ with $C>0$ generic constant whose dependencies will be specified at each occurrence.

\subsection{Mechanical term}

The discretisation of the mechanical term hinges on the methods of \cite[Chapter 7]{Di-Pietro.Droniou:20} and \cite{Botti.Di-Pietro.ea:19*3}.

\subsubsection{Discrete displacement unknowns}

The space of discrete displacement unknowns is
\begin{equation*} 
  \uvec{U}_h^k\coloneq\left\{
  \uvec{v}_h\coloneq( (\bvec{v}_T)_{T\in\Th}, (\bvec{v}_F)_{F\in\Fh} )\st
  \bvec{v}_T\in\Poly{k}(T)^d\quad\forall T\in\Th\mbox{ and }
  \bvec{v}_F\in\Poly{k}(F)^d\quad\forall F\in\Fh
  \right\}.
\end{equation*}
For all $\uvec{v}_h\in\uvec{U}_h^k$, we let $\bvec{v}_h\in\Poly{k}(\Th)^d$ be the function obtained patching element unknowns, that is,
\begin{equation}\label{eq:vh}
  (\bvec{v}_h)_{|T}\coloneq\bvec{v}_T\qquad\forall T\in\Th.
\end{equation}
For any mesh element $T\in\Th$, we denote by $\uvec{U}_T^k$ and $\uvec{v}_T$ the restrictions to $T$ of $\uvec{U}_h^k$ and $\uvec{v}_h\in\uvec{U}_h^k$, respectively.
We also introduce the following subspace of $\uvec{U}_h^k$ that strongly accounts for the boundary condition \eqref{eq:biot:strong:bc.u}:
\begin{equation}\label{eq:Uhk0}
  \uvec{U}_{h,0}^k\coloneq\left\{
  \uvec{v}_h\in\uvec{U}_h^k\st \bvec{v}_F=\bvec{0}\quad\forall F\in\Fhb.
  \right\}.
\end{equation}
The global displacement interpolator $\uvec{I}_h^k:H^1(\Omega)^d\to\uvec{U}_h^k$ is such that, for all $\bvec{v}\in H^1(\Omega)^d$,
\[
\uvec{I}_h^k\bvec{v}\coloneq (
(\vlproj[T]{k}\bvec{v})_{T\in\Th}, (\vlproj[F]{k}\bvec{v})_{F\in\Fh}
).
\]

\subsubsection{Local reconstructions}

For any mesh element $T\in\Th$, we define the strain reconstruction $\ET:\uvec{U}_T^k\to\Polys{k}(T)$ such that it holds, for all $\uvec{v}_T\in\uvec{U}_T^k$,
\begin{equation}\label{eq:ET}
  (\ET\uvec{v}_T,\btens{\tau})_{L^2(T)^{d\times d}}
  = -(\bvec{v}_T,\DIV\btens{\tau})_{L^2(T)^d}
  + \sum_{F\in\Fh[T]}(\bvec{v}_F,\btens{\tau}\normal_{TF})_{L^2(F)^d}\qquad
  \forall\btens{\tau}\in\Polys{k}(T).
\end{equation}
We will also need the displacement reconstruction $\rT:\uvec{U}_T^k\to\Poly{k+1}(T)^d$ such that, for all $\uvec{v}_T\in\uvec{U}_T^k$,
\begin{gather*}
  (\GRADs\rT\uvec{v}_T-\ET\uvec{v}_T,\GRADs\bvec{w})_{L^2(T)^{d\times d}}=0\qquad\forall\bvec{w}\in\Poly{k+1}(T)^d,
  \\
  \int_T\rT\uvec{v}_T=\int_T\bvec{v}_T,\mbox{ and }
  \int_T\GRADss\rT\uvec{v}_T=\frac12\sum_{F\in\Fh[T]}\int_F\left(
  \bvec{v}_F\otimes\normal_{TF}-\normal_{TF}\otimes\bvec{v}_F
  \right),
\end{gather*}
where $\GRADss$ denotes the skew-symmetric part of the gradient.
The global strain and displacement reconstructions $\Eh:\uvec{U}_h^k\to\Polys{k}(\Th)$ and $\rh:\uvec{U}_h^k\to\Poly{k+1}(\Th)^d$ are obtained setting, for all $\uvec{v}_h\in\uvec{U}_h^k$,
\[
(\Eh\uvec{v}_h)_{|T}\coloneq\ET\uvec{v}_T,\qquad
(\rh\uvec{v}_h)_{|T}\coloneq\rT\uvec{v}_T\qquad\forall T\in\Th.
\]

\subsubsection{Discrete bilinear form and strain norm}

The bilinear form discretising the mechanical term is $\mathrm{a}_h:\uvec{U}_h^k\times\uvec{U}_h^k\to\Real$ such that, for all $\uvec{w}_h,\uvec{v}_h\in\uvec{U}_h^k$,
\begin{equation}\label{eq:ah}
  \mathrm{a}_h(\uvec{w}_h,\uvec{v}_h)
  \coloneq\begin{cases}
  (\btens{\sigma}(\Eh\uvec{w}_h),\Eh\uvec{v}_h)_{L^2(\Omega)^{d\times d}}
  + \mathrm{s}_{\mu,h}(\uvec{w}_h,\uvec{v}_h)
  & \text{if $k\ge 1$},
  \\[5pt]
  (\btens{\sigma}(\Eh[0]\uvec{w}_h),\Eh[0]\uvec{v}_h)_{L^2(\Omega)^{d\times d}}
  + \mathrm{s}_{\mu,h}(\uvec{w}_h,\uvec{v}_h)
  + \mathrm{j}_{\mu,h}(\rh[1]\uvec{w}_h,\rh[1]\uvec{v}_h)
  & \text{if $k=0$}.
  \end{cases}
\end{equation}
In the above expression, letting, for all $T\in\Th$, all $\uvec{v}_T\in\uvec{U}_T^k$, and all $F\in\Fh[T]$, $\bvec{\Delta}_{TF}^k\uvec{v}_T\coloneq\vlproj[F]{k}\left(\rT\uvec{v}_T-\bvec{v}_F\right)-\vlproj[T]{k}\left(\rT\uvec{v}_T-\bvec{v}_T\right)$, and denoting by $h_F$ the diameter of $F$, the stabilisation bilinear form $\mathrm{s}_{\mu,h}:\uvec{U}_h^k\times\uvec{U}_h^k\to\Real$ is such that, for all $\uvec{w}_h,\uvec{v}_h\in\uvec{U}_h^k$,
\[
\mathrm{s}_{\mu,h}(\uvec{w}_h,\uvec{v}_h)
\coloneq2\mu\sum_{T\in\Th}\sum_{F\in\Fh[T]}h_F^{-1}(\bvec{\Delta}_{TF}^k\uvec{w}_T,\bvec{\Delta}_{TF}^k\uvec{v}_T)_{L^2(F)^d},
\]
while the jump penalisation bilinear form $\mathrm{j}_{\mu,h}:H^1(\Th)^d\times H^1(\Th)^d\to\Real$ is such that, for all $\bvec{w},\bvec{v}\in H^1(\Th)^d$,
\[
\mathrm{j}_{\mu,h}(\bvec{w},\bvec{v})
\coloneq 2\mu\sum_{F\in\Fh}h_F^{-1} (\jump{\bvec{w}},\jump{\bvec{v}})_{L^2(F)^d},
\]
where we have extended the definition of the jump operator to boundary faces setting $\jump{\varphi}\coloneq\varphi$ for all $F\in\Fhb$.
This jump penalisation bilinear form is required to ensure stability when $k=0$, cf. the discussion in \cite[Section 4.4]{Botti.Di-Pietro.ea:19*3}.

Define on $\uvec{U}_h^k$ the strain seminorm such that, for all $\uvec{v}_h\in\uvec{U}_h^k$, 
\begin{multline}\label{eq:norm.strain.h}
  \norm[\strain,h]{\uvec{v}_h}^2\coloneq\\
  \begin{cases}
  \norm[L^2(\Omega)^{d\times d}]{\GRADsh\bvec{v}_h}^2 + \displaystyle\sum_{T\in\Th}\sum_{F\in\Fh[T]}h_F^{-1}\norm[L^2(F)^d]{\bvec{v}_F-\bvec{v}_T}^2
  & \text{if $k\ge 1$},
  \\
  \norm[L^2(\Omega)^{d\times d}]{\GRADsh\rh[1]\uvec{v}_h}^2
  + \displaystyle\sum_{T\in\Th}\displaystyle\sum_{F\in\Fh[T]} h_F^{-1}\norm[L^2(F)^d]{\bvec{\Delta}_{TF}^0\uvec{v}_T}^2
  + \displaystyle\sum_{F\in\Fh}h_F^{-1}\norm[L^2(F)^d]{\jump{\rh[1]\uvec{v}_h}}^2
  & \text{if $k=0$}.
  \end{cases}
\end{multline}
The map $\norm[\strain,h]{{\cdot}}$ is a norm on $\uvec{U}_{h,0}^k$. 
For future use, we note the following boundedness property (see \cite[Lemmas 7.12]{Di-Pietro.Droniou:20} for a detailed proof in the case $k\ge 1$): For all $\bvec{v}\in H^1(\Omega)^d$,
\begin{equation}\label{eq:Ih.boundedness}
  \norm[\strain,h]{\uvec{I}_h^k\bvec{v}}\lesssim\norm[H^1(\Omega)^d]{\bvec{v}},
\end{equation}
where the hidden constant depends only on $d$, $k$, and the mesh regularity parameter.

The stability and boundedness of $\mathrm{a}_h$ are expressed by the existence of a real number $C_{\mathrm{a}}>0$ independent of $h$, $\mu$, and $\lambda$ such that, for all $\uvec{v}_h\in\uvec{U}_h^k$,
\begin{equation}\label{eq:ah:coercivity.boundedness}
  C_{\mathrm{a}}^{-1}2\mu\norm[\strain,h]{\uvec{v}_h}^2
  \le\mathrm{a}_h(\uvec{v}_h,\uvec{v}_h)
  \le C_{\mathrm{a}}\left(2\mu+d\lambda^+\right)\norm[\strain,h]{\uvec{v}_h}^2.
\end{equation}
The proof is a straightforward adaptation of \cite[Lemmas 7.27]{Di-Pietro.Droniou:20} for the case $k\ge 1$ and to \cite[Lemma 8]{Botti.Di-Pietro.ea:19*3} for the case $k=0$ (in these lemmas, $\lambda^+$ is replaced by $|\lambda|$ in the right-hand side).
Moreover it holds, for all $\bvec{w}\in H_0^1(\Omega)^d$ such that $\bvec{w}\in H^{m+2}(\Th)^d$ for some $m\in\{0,\ldots,k\}$ and $\DIV\btens{\sigma}(\GRADs\bvec{w})\in L^2(\Omega)^d$,
\begin{equation}\label{eq:ah:consistency}
  \norm[\strain,h,*]{\mathcal{E}_{\mathrm{a},h}(\bvec{w};\cdot)}
  \lesssim h^{m+1}\left(
  \seminorm[H^{m+1}(\Th)^{d\times d}]{\btens{\sigma}(\GRADs\bvec{w})}
  + 2\mu\seminorm[H^{m+2}(\Th)^d]{\bvec{w}}
  \right),
\end{equation}
where $\norm[\strain,h,*]{{\cdot}}$ denotes the norm dual to $\norm[\strain,h]{{\cdot}}$ such that, for any linear form $\ell_h:\uvec{U}_{h,0}^k\to\Real$,
\[
\norm[\strain,h,*]{\ell_h}\coloneq\sup_{\uvec{v}_h\in\uvec{U}_{h,0}^k\setminus\{\uvec{0}\}}\frac{\ell_h(\uvec{v}_h)}{\norm[\strain,h]{\uvec{v}_h}},
\]
and the mechanical consistency error linear form is such that, for all $\uvec{v}_h\in\uvec{U}_{h,0}^k$,
\begin{equation}\label{eq:ah:consistency.error}
  \mathcal{E}_{\mathrm{a},h}(\bvec{w};\uvec{v}_h)
  \coloneq -(\DIV\btens{\sigma}(\GRADs\bvec{w}),\bvec{v}_h)_{L^2(\Omega)^d} - \mathrm{a}_h(\bvec{I}_h^k\bvec{w},\uvec{v}_h).
\end{equation}

\subsection{Hybrid High-Order discretisation of the Darcy term}

We next describe the HHO discretisation of the Darcy term based on the method of \cite[Section 3]{Di-Pietro.Ern.ea:16}; see also \cite[Section 3.1]{Di-Pietro.Droniou:20} for a detailed analysis.

\subsubsection{Discrete pore pressure unknowns}

We define the space of discrete pore pressure unknowns
\begin{equation}\label{eq:Phk.hho}
  \underline{P}_h^k\coloneq\left\{
  \underline{q}_h\coloneq( (q_T)_{T\in\Th}, (q_F)_{F\in\Fh} )\st
  q_T\in\Poly{k}(T)\quad\forall T\in\Th\mbox{ and }
  q_F\in\Poly{k}(F)\quad\forall F\in\Fh
  \right\}
\end{equation}
and, for all $\underline{q}_h\in\underline{P}_h^k$, we let $q_h\in\Poly{k}(\Th)$ be the broken polynomial function such that
\begin{equation}\label{eq:qh}
  (q_h)_{|T}\coloneq q_T\qquad\forall T\in\Th.
\end{equation}
For any mesh element $T\in\Th$, we denote by $\underline{P}_T^k$ and $\underline{q}_T$ the restrictions to $T$ of $\underline{P}_h^k$ and $\underline{q}_h\in\underline{P}_h^k$, respectively.
We also define the global pressure interpolator $\underline{I}_h^k:H^1(\Omega)\to\underline{P}_h^k$ such that, for all $q\in H^1(\Omega)$,
\[
\underline{I}_h^k q\coloneq ( (\lproj[T]{k} q)_{T\in\Th}, (\lproj[F]{k} q)_{F\in\Fh} ).
\]

\subsubsection{Local reconstruction}

For any mesh element $T\in\Th$, we reconstruct a high-order pore pressure through the operator $\pT:\underline{P}_T^k\to\Poly{k+1}(T)$ such that, for all $\underline{q}_T\in\underline{P}_T^k$,
\begin{gather*}
  (\diff_T\GRAD\pT\underline{q}_T,\GRAD r)_{L^2(T)^d}
  = -(q_T,\DIV(\diff_T\GRAD r))_{L^2(T)^d}
  + \sum_{F\in\Fh[T]} (q_F,\diff_T\GRAD r\cdot\normal_{TF})_{L^2(F)}\qquad
  \forall r\in\Poly{k+1}(T)
  \\
  \int_\Omega\pT\underline{q}_T=\int_\Omega q_T.
\end{gather*}
A global pore pressure reconstruction $\ph:\underline{P}_h^k\to\Poly{k+1}(\Th)$ is obtained patching the local reconstructions:
For all $\underline{q}_h\in\underline{P}_h^k$,
\begin{equation}\label{eq:ph}
  (\ph\underline{q}_h)_{|T}\coloneq\pT\underline{q}_T\qquad\forall T\in\Th.
\end{equation}

\subsubsection{Bilinear form}

The bilinear form for the HHO discretisation of the Darcy term is $\mathrm{c}_h^\hho:\underline{P}_h^k\times\underline{P}_h^k\to\Real$ such that, for all $\underline{r}_h,\underline{q}_h\in\underline{P}_h^k$,
\begin{equation}\label{eq:ch.hho}
  \mathrm{c}_h^\hho(\underline{r}_h,\underline{q}_h)
  \coloneq
  (\diff\GRADh\ph\underline{r}_h,\GRADh\ph\underline{q}_h)_{L^2(\Omega)^d}
  + \mathrm{s}_{\diff,h}(\underline{r}_h,\underline{q}_h),
\end{equation}
where the stabilisation bilinear form $\mathrm{s}_{\diff,h}$ is such that
\[
\mathrm{s}_{\diff,h}(\underline{r}_h,\underline{q}_h)
\coloneq\sum_{T\in\Th}\sum_{F\in\Fh[T]}\frac{\kappa_{TF}}{h_F}(\Delta_{TF}^k\underline{r}_T,\Delta_{TF}^k\underline{q}_T)_{L^2(F)}
\]
with $\kappa_{TF}\coloneq\diff_T\normal_{TF}\cdot\normal_{TF}$ and, for all $T\in\Th$, all $\underline{q}_T\in\underline{P}_T^k$, and all $F\in\Fh[T]$, $\Delta_{TF}^k\underline{q}_T\coloneq\lproj[F]{k}(\pT\underline{q}_T-q_F) - \lproj[T]{k}(\pT\underline{q}_T-q_T)$.
It can be checked that the bilinear form $\mathrm{c}_h$ is coercive on
\[
\underline{P}_{h,0}^k\coloneq\left\{\underline{q}_h\in\underline{P}_h^k\st\int_\Omega q_h=0\right\},
\]
hence, being symmetric, it defines an inner-product and an induced norm on this space.
Set, for any $\underline{q}_h\in\underline{P}_h^k$,
\begin{equation}\label{eq:norm.hho.h}
  \norm[\hho,h]{\underline{q}_h}\coloneq\mathrm{c}_h^\hho(\underline{q}_h,\underline{q}_h)^{\frac12}
\end{equation}
and denote by $\norm[\hho,h,*]{{\cdot}}$ the corresponding dual norm such that, for any linear form $\ell_h:\underline{P}_{h,0}^k\to\Real$,
\[
\norm[\hho,h,*]{\ell_h}\coloneq\sup_{\underline{q}_h\in\underline{P}_{h,0}^k\setminus\{0\}}\frac{\ell_h(\underline{q}_h)}{\norm[\hho,h]{\underline{q}_h}}.
\]

Consider now the HHO pore pressure projection problem:
Given $r\in H^1(\Omega)$ such that $\DIV(\diff\GRAD r)\in L^2(\Omega)$, find $\underline{r}_h\in\underline{P}_h^k$ such that
\begin{equation}\label{eq:ch.hho.projection}
  \begin{aligned}
    \mathrm{c}_h^\hho(\underline{r}_h,\underline{q}_h) &= -(\DIV(\diff\GRAD r),q_h)_{L^2(\Omega)}\qquad\forall\underline{q}_h\in\underline{P}_h^k,
    \\
    \int_\Omega r_h(\bvec{x})\ud\bvec{x} &= \int_\Omega r(\bvec{x})\ud\bvec{x}.
  \end{aligned}
\end{equation}

\begin{proposition}[Error estimate for the pore pressure projection problem]
  Let $r\in H^1(\Omega)\cap H^{m+2}(\Th)$ for some $m\in\{0,\ldots,k\}$ be such that $\DIV(\diff\GRAD r)\in L^2(\Omega)$, and denote by $\underline{r}_h\in\underline{P}_h^k$ the unique solution to problem \eqref{eq:ch.hho.projection}.
  Then, it holds that
  \begin{equation}\label{eq:ch.hho.projection:err.est}
    \norm[L^2(\Omega)]{r_h-r}
    + h \norm[L^2(\Omega)^d]{\GRADh(r_h-r)}
    \lesssim\rho\left(
      \sum_{T\in\Th}\rho_Th_T^{2(m+1)}\seminorm[H^{m+2}(T)]{r}^2
      +  h^{2(m+1)}\seminorm[H^{m+1}(\Th)]{r}^2
    \right)^{\frac12},
  \end{equation}
  with hidden constant independent of $h$, $\diff$, and $r$.
\end{proposition}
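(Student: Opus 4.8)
\emph{Reduction to an energy-norm estimate.}
Problem \eqref{eq:ch.hho.projection} is the HHO discretisation of the pure diffusion problem with source term $-\DIV(\diff\GRAD r)\in L^2(\Omega)$, so \eqref{eq:ch.hho.projection:err.est} is an instance of the standard a priori error bound for the HHO approximation of the Darcy operator (see \cite[Section 3]{Di-Pietro.Ern.ea:16} and \cite[Section 3.1]{Di-Pietro.Droniou:20}). Since $\mathrm{c}_h^\hho$ is symmetric and coercive on $\underline{P}_{h,0}^k$ for $\norm[\hho,h]{{\cdot}}$, the problem is well posed. I would set $\underline{e}_h\coloneq\underline{r}_h-\underline{I}_h^k r$ and observe that $\underline{e}_h\in\underline{P}_{h,0}^k$, since $\int_\Omega r_h=\int_\Omega r=\sum_{T\in\Th}\int_T\lproj[T]{k}r$. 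Subtracting $\mathrm{c}_h^\hho(\underline{I}_h^k r,\underline{q}_h)$ from the first equation in \eqref{eq:ch.hho.projection} gives $\mathrm{c}_h^\hho(\underline{e}_h,\underline{q}_h)=\mathcal{E}_{\mathrm{c},h}(r;\underline{q}_h)$ for all $\underline{q}_h\in\underline{P}_h^k$, with the consistency error
\[
\mathcal{E}_{\mathrm{c},h}(r;\underline{q}_h)\coloneq-(\DIV(\diff\GRAD r),q_h)_{L^2(\Omega)}-\mathrm{c}_h^\hho(\underline{I}_h^k r,\underline{q}_h).
\]
Taking $\underline{q}_h=\underline{e}_h$ and recalling \eqref{eq:norm.hho.h} yields $\norm[\hho,h]{\underline{e}_h}\le\norm[\hho,h,*]{\mathcal{E}_{\mathrm{c},h}(r;{\cdot})}$, so the task splits into bounding this dual norm and converting the result into the $L^2$-and-broken-gradient control of $r_h-r$.

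\emph{The consistency estimate (main point).}
This is the heart of the argument and the step I expect to be most delicate. One writes $\mathcal{E}_{\mathrm{c},h}(r;\underline{q}_h)$ as a sum over $T\in\Th$ of local contributions, uses the commutation property $\pT\big((\underline{I}_h^k r)_{|T}\big)=\check{r}_T$, where $\check{r}_T\in\Poly{k+1}(T)$ is the $\diff_T$-weighted local elliptic projection of $r$, integrates by parts elementwise so as to pair the residual with $\diff_T\GRAD\pT\underline{q}_T$, and then estimates the volumetric and face residuals by discrete trace inequalities, optimal approximation of $r$ in $\Poly{k+1}(T)$ and of the flux $\diff_T\GRAD r$ in $\Poly{k}(T)^d$, and the stabilisation seminorm $\mathrm{s}_{\diff,h}(\underline{q}_h,\underline{q}_h)^{1/2}$. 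A Cauchy--Schwarz over the elements reassembles the local energy seminorms into $\norm[\hho,h]{\underline{q}_h}$ and produces
\[
\norm[\hho,h,*]{\mathcal{E}_{\mathrm{c},h}(r;{\cdot})}\lesssim\left(\sum_{T\in\Th}\overline{K}_T\,\rho_T\,h_T^{2(m+1)}\seminorm[H^{m+2}(T)]{r}^2\right)^{\frac12},
\]
with hidden constant depending only on $d$, $k$ and the mesh regularity. The factor $\rho_T$ enters precisely because the boundary residuals carry a weight $\overline{K}_T$ while the local energy seminorm that is divided out is bounded below only by $\underline{K}_T$; keeping these weights attached \emph{elementwise} — so that no global $\overline{K}$ or $\underline{K}$ ever survives — is the subtle point.

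\emph{From the energy norm to the stated norms.}
Split $r_h-r=(r_h-\lproj{k}r)+(\lproj{k}r-r)$. The second summand and its broken gradient are controlled by the optimal $L^2$-projection estimates, contributing $\lesssim(\sum_{T\in\Th} h_T^{2(m+1)}\seminorm[H^{m+1}(T)]{r}^2)^{1/2}$ and $\lesssim h(\sum_{T\in\Th} h_T^{2m}\seminorm[H^{m+1}(T)]{r}^2)^{1/2}$, both dominated by $h^{m+1}\seminorm[H^{m+1}(\Th)]{r}$. The first summand is the function obtained by patching the element components of $\underline{e}_h$; using the elementwise $\diff$-weighted equivalences between $\norm[\hho,h]{{\cdot}}$ and the canonical unweighted HHO $H^1$-seminorm, local inverse inequalities, the approximation properties of $\pT\underline{e}_T$, and a discrete Poincaré--Wirtinger inequality (legitimate since $\underline{e}_h\in\underline{P}_{h,0}^k$), one bounds its $L^2$-norm and $h$ times its broken gradient by a constant times $\norm[\hho,h]{\underline{e}_h}$, the weight bookkeeping now producing the outer factor $\rho$. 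Combining this with the consistency estimate above and with the projection estimates, and using $\rho\ge1$ and $\rho_T\ge1$ to absorb the $\seminorm[H^{m+1}(\Th)]{r}$ terms under the prefactor $\rho$, gives \eqref{eq:ch.hho.projection:err.est}. The only genuinely technical ingredients are this anisotropy bookkeeping and, in the broken-gradient bound, the interplay between the global meshsize $h$ and the local sizes $h_T$ needed to keep the final constant a function of the ratios $\rho$, $\rho_T$ alone.
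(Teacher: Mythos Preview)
Your proposal is correct and follows the same route as the paper. The only difference is cosmetic: where you sketch the consistency-error argument that bounds $\norm[\hho,h]{\underline{e}_h}$ and then convert to unweighted norms, the paper simply invokes the ready-made estimate \cite[Theorem 3.18 and Lemma 3.15(i)]{Di-Pietro.Droniou:20} (adapted to Neumann boundary conditions), which already controls $\norm[L^2(\Omega)^d]{\GRADh(r_h-\lproj{k}r)}$ and $\seminorm[1,h]{\underline{r}_h-\underline{I}_h^k r}$ with the factor $\rho(\sum_T\rho_T h_T^{2(m+1)}\seminorm[H^{m+2}(T)]{r}^2)^{1/2}$; the triangle-inequality split with $\lproj{k}r$, the discrete Poincar\'e--Wirtinger step, and the $L^2$-projection estimates you describe are exactly those the paper uses.
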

\begin{proof}
  Let, for any $\underline{q}_h\in\underline{P}_h^k$, $\seminorm[1,h]{\underline{q}_h}^2\coloneq\sum_{T\in\Th}\sum_{F\in\Fh[T]}h_F^{-1}\norm[L^2(F)]{q_F-q_T}^2$.
  A straightforward adaptation of the estimate of \cite[Theorem 3.18]{Di-Pietro.Droniou:20} to the case of homogeneous Neumann boundary conditions combined with the seminorm equivalence of \cite[Lemma 3.15, Point (i)]{Di-Pietro.Droniou:20} gives
  \[
  \norm[L^2(\Omega)^d]{\GRADh(r_h-\lproj{k} r)}
  + \seminorm[1,h]{\underline{r}_h-\underline{I}_h^k r}
  \lesssim \rho\left(
  \sum_{T\in\Th}\rho_Th_T^{2(m+1)}\seminorm[H^{m+2}(T)]{r}^2
  \right)^{\frac12}.
  \]
  By the discrete Poincar\'e--Wirtinger inequality of \cite[Theorem 6.5]{Di-Pietro.Droniou:20} with $p=q=2$ (which can be applied here since the function $r_h-\lproj{k} r$ has zero-mean value over $\Omega$), $\norm[L^2(\Omega)]{r_h-\lproj{k}r}$ is bounded by the left-hand side of the above expression with constant independent of both $h$ and $\diff$, hence the same bound holds also for this quantity.
  Inserting $\lproj{k}r$ into the norms in the left-hand side of \eqref{eq:ch.hho.projection:err.est} and using the triangle inequality, we next infer
  \[
  \begin{aligned}
    \norm[L^2(\Omega)]{r_h-r} + h \norm[L^2(\Omega)^d]{\GRADh(r_h-r)}
    &\le
    \norm[L^2(\Omega)]{r_h-\lproj{k}r} + h \norm[L^2(\Omega)^d]{\GRADh(r_h-\lproj{k}r)}
    \\
    &\quad
    + \norm[L^2(\Omega)]{\lproj{k}r -r} + h \norm[L^2(\Omega)^d]{\GRADh(\lproj{k}r-r)}.
  \end{aligned}
  \]
  To conclude, apply the estimates discussed above to the terms in the first line, use the approximation properties of the $L^2$-orthogonal projector (see \cite{Di-Pietro.Droniou:17}) for the terms in the second line, and observe that, denoting by $h_\Omega$ the diameter of $\Omega$, $h\le h_\Omega\lesssim 1$.
\end{proof}
\begin{remark}[Improved error estimates]
  When elliptic regularity holds, an improved error estimate in $h^{m+2}$ can be proved for the $L^2$-norm of the difference between the pressure and its global reconstruction obtained through $\ph$ (see \eqref{eq:ph}); see \cite[Remark 3.21]{Di-Pietro.Droniou:20}.
  The dependence on the global anisotropy ratio can also be avoided when estimating the error in the natural energy norm; see \cite[Theorems 3.18 and 3.19]{Di-Pietro.Droniou:20}.
  The estimate \eqref{eq:ch.hho.projection:err.est} is, however, the one required for the following analysis.
\end{remark}

\subsection{Discontinuous Galerkin discretisation of the Darcy term}\label{sec:dg.darcy}

Assume $k\ge 1$ and define the weighted average operator such that, for all $\varphi\in H^1(\Th)$ and all $F\in\Fhi$ shared by the mesh elements $T_1$ and $T_2$,
\[
\wavg{\varphi}\coloneq\omega_{F,1}\left(\varphi_{|T_1}\right)_{|F} + \omega_{F,2}\left(\varphi_{|T_2}\right)_{|F},
\]
where, letting $\kappa_{T_iF}\coloneq(\diff_{T_i}\normal_F\cdot\normal_F)^{\frac12}$, $i\in\{1,2\}$, the weights are such that
\[
\omega_1\coloneq\frac{\kappa_{T_2F}^{\nicefrac12}}{\kappa_{T_1F}^{\nicefrac12}+\kappa_{T_2F}^{\nicefrac12}}\mbox{ and }
\omega_2=1-\omega_1=\frac{\kappa_{T_1F}^{\nicefrac12}}{\kappa_{T_1F}^{\nicefrac12}+\kappa_{T_2F}^{\nicefrac12}}.
\]
When applied to vector-valued functions, the average operator acts component-wise.
Let a polynomial degree $k\ge 1$ be fixed.
The Discontinous Galerkin discretisation of the Darcy term hinges on the bilinear form $\mathrm{c}_h^\dg:\Poly{k}(\Th)\times\Poly{k}(\Th)\to\Real$ such that, for all $r_h,q_h\in\Poly{k}(\Th)$,
\begin{equation}\label{eq:ch.dg}
  \begin{aligned}
    \mathrm{c}_h^\dg(r_h,q_h)
    &\coloneq(\diff\GRADh r_h,\GRADh q_h)_{L^2(\Omega)^d}
    + \sum_{F\in\Fhi}\frac{\eta\kappa_F}{h_F} (\jump{r_h},\jump{q_h})_{L^2(F)}
    \\
    &\quad
    - \sum_{F\in\Fhi}\left(
    (\jump{r_h},\wavg{\diff\GRADh q_h}\cdot\normal_F)_{L^2(F)}
    + (\wavg{\diff\GRADh r_h}\cdot\normal_F,\jump{q_h})_{L^2(F)}
    \right),
  \end{aligned}
\end{equation}
where, for any interface $F\in\Fhi$ shared by the elements $T_1,T_2\in\Th$, $\kappa_F\coloneq\min(\kappa_{T_1F},\kappa_{T_2F})$, and $\eta>0$ is a stabilisation parameter.
Furnish $\Poly{k}(\Th)\cap L^2_0(\Omega)$ with the norm such that, for all $q_h\in\Poly{k}(\Th)$,
\begin{equation}\label{eq:norm.dg.h}
  \norm[\dg,h]{q_h}\coloneq 
  \left(
  \norm[L^2(\Omega)^d]{\GRADh q_h}^2 + \sum_{F\in\Fhi}\frac{\kappa_F}{h_F}\norm[L^2(F)]{\jump{q_h}}^2
  \right)^{\frac12}.
\end{equation}
Provided that $\eta$ is large enough, the bilinear form $\mathrm{c}_h^\dg$ is coercive with respect to this norm with coercivity constant $\gamma^\dg$ depending only on $\eta$, a discrete trace constant, and an upper bound on number of faces of one mesh element.

Let $r\in H^1(\Omega)$ be such that $\DIV(\diff\GRAD r)\in L^2(\Omega)$, and consider the problem that consists in finding $r_h\in\Poly{k}(\Th)$ such that
\begin{equation}\label{eq:ch.dg.projection}
  \begin{aligned}
    \mathrm{c}_h^\dg(r_h,q_h) &= -(\DIV(\diff\GRAD r), q_h)_{L^2(\Omega)} \qquad\forall q_h\in\Poly{k}(\Th),
    \\
    \int_\Omega r_h(\bvec{x})\ud\bvec{x} &= \int_\Omega r(\bvec{x})\ud\bvec{x}.
  \end{aligned}
\end{equation}
Then, assuming elliptic regularity (that is, $\Omega$ convex and $\diff$ constant over $\Omega$) and  $r\in H^{m+1}(\Th)$ for some $m\in\{0,\ldots,k\}$, it holds
\begin{equation}\label{eq:ch.dg.projection:err.est}
  \underline{K}^{\frac12}\norm[L^2(\Omega)]{r_h-r} + h \norm[\dg,h]{r_h-r}\lesssim h^{m+1}\overline{K}^{\frac12}\seminorm[H^{m+1}(\Th)]{r},
\end{equation}
with hidden constant independent of $h$, $\diff$, and $r$.
The proof of this result, that requires the assumption $k\ge 1$, is a straightforward adaptation of \cite[Appendix A]{Di-Pietro.Droniou:18*1} to the case of homogeneous Neumann boundary conditions.

\begin{remark}[Role of elliptic regularity]
  Without elliptic regularity, the first term in the left-hand side of \eqref{eq:ch.dg.projection:err.est} is only $\mathcal{O}(h^m)$, and one order of convergence is lost in estimate \eqref{eq:err.est_dg} for the Biot problem. Notice that this restriction is not present when using the HHO discretisation of the Darcy term.
\end{remark}

\subsection{Displacement--pore pressure coupling term}

Define the global discrete divergence operator $\Dh:\uvec{U}_h^k\to\Poly{k}(\Th)$ such that, for all $\uvec{v}_h\in\uvec{U}_h^k$,
\[
\Dh\uvec{v}_h\coloneq\tr(\Eh\uvec{v}_h),
\]
where $\tr$ denotes the trace operator.
By definition, we have the following commutation property: For all $\bvec{v}\in H^1(\Omega)^d$,
\begin{equation}\label{eq:Dh.commutation}
  \Dh(\uvec{I}_h^k\bvec{v}) = \lproj{k}(\DIV\bvec{v}).
\end{equation}
The displacement-pore pressure coupling is realised by the bilinear form $\mathrm{b}_h:\uvec{U}_h^k\times\Poly{k}(\Th)\to\Real$ such that, for any $(\uvec{v}_h,q_h)\in\uvec{U}_h^k\times\Poly{k}(\Th)$,
\begin{equation}\label{def:bh}
  \mathrm{b}_h(\uvec{v}_h,q_h)\coloneq - (\Dh\uvec{v}_h, q_h)_{L^2(\Omega)}.
\end{equation}
Expanding $\Dh$ according to its definition and invoking, inside each element $T\in\Th$, \eqref{eq:ET} with $\btens{\tau}=q_T\Id$ where $q_T\coloneq q_{h|T}$, we obtain the following equivalent expression:
\begin{equation}\label{eq:bh}
\mathrm{b}_h(\uvec{v}_h,q_h)
= - \sum_{T\in\Th}\left[
  (\DIV\bvec{v}_T, q_T)_{L^2(T)}
  + \sum_{F\in\Fh[T]}(\bvec{v}_F-\bvec{v}_T,q_T\normal_{TF})_{L^2(F)^d}
  \right].
\end{equation}
For use in the following lemma, we notice that, by \cite[Lemmas 7.24 and 7.42]{Di-Pietro.Droniou:20}, we have the following discrete Korn inequality:
\begin{equation}\label{eq:korn.discrete}
  \norm[1,h]{\uvec{v}_h}\coloneq\left(
  \norm[L^2(\Omega)^{d\times d}]{\GRADh\bvec{v}_h}^2
  + \sum_{T\in\Th}\sum_{F\in\Fh[T]}h_F^{-1}\norm[L^2(F)^d]{\bvec{v}_F-\bvec{v}_T}^2
  \right)^{\frac12}\lesssim\norm[\strain,h]{\uvec{v}_h}\qquad
  \forall\uvec{v}_h\in\uvec{U}_{h,0}^k,
\end{equation}
where the hidden constant depends only on $\Omega$, $d$, $k$, and the mesh regularity parameter.
\begin{lemma}[Properties of the bilinear form $\mathrm{b}_h$]
  The bilinear form $\mathrm{b}_h$ has the following properties:
  \begin{enumerate}
  \item \emph{Consistency/1.} For all $\bvec{v}\in H^1(\Omega)^d$ it holds, with bilinear form $b$ defined by \eqref{eq:a.b.c},
    \begin{equation}\label{eq:bh:consistency:1}
      \mathrm{b}_h(\uvec{I}_h^k\bvec{v},q_h) = b(\bvec{v},q_h)\qquad\forall q_h\in\Poly{k}(\Th).
    \end{equation}
  \item \emph{Inf-sup stability.} There is a real number $\beta>0$ independent of $h$, but possibly depending on $\Omega$, $d$, $k$, and the mesh regularity parameter such that
    \begin{equation}\label{eq:bh:inf-sup}
      \forall q_h\in \Poly{k}(\Th),\qquad
      \beta\norm[L^2(\Omega)]{q_h-\lproj[\Omega]{0}q_h}\le\norm[\strain,h,*]{\mathrm{b}_h(\cdot,q_h)}.
    \end{equation}
  \item \emph{Consistency/2.} Let $r\in H^1(\Omega)\cap H^{m+2}(\Th)$ for some $m\in\{0,\ldots,k\}$  be such that $\DIV(\diff\GRAD r)\in L^2(\Omega)$.
    Let $\underline{r}_h^\hho\in\underline{P}_h^k$ be the solution to problem \eqref{eq:ch.hho.projection}.
    Then, it holds that
    \begin{equation}\label{eq:bh:consistency:hho}
      \norm[\strain,h,*]{\mathcal{E}_{\mathrm{b},h}^\hho(r;\cdot)}\lesssim
      \rho\left(
      \sum_{T\in\Th}\rho_Th_T^{2(m+1)}\seminorm[H^{m+2}(T)]{r}^2 + h^{2(m+1)}\seminorm[H^{m+1}(\Th)]{r}^2
      \right)^{\frac12},
    \end{equation}
    where the consistency error linear form $\mathcal{E}_{\mathrm{b},h}^\hho(r;\cdot):\uvec{U}_h^k\to\Real$ is such that, for all $\uvec{v}_h\in\uvec{U}_h^k$,
    \begin{equation}\label{eq:bh.hho:consistency.error}
      \mathcal{E}_{\mathrm{b},h}^\hho(r;\uvec{v}_h)
      \coloneq (\bvec{v}_h,\GRAD r)_{L^2(\Omega)^d} - \mathrm{b}_h(\uvec{v}_h,r_h^\hho).
    \end{equation}
  \item \emph{Consistency/3.} Assume $k\ge 1$. Let $r\in H^1(\Omega)\cap H^{m+1}(\Th)$ for some $m\in\{0,\ldots,k\}$  be such that $\DIV(\diff\GRAD r)\in L^2(\Omega)$.
    Let $r_h^\dg\in\Poly{k}(\Th)$ be the solution to problem \eqref{eq:ch.dg.projection}.
    Then, assuming elliptic regularity, it holds that
    \begin{equation}\label{eq:bh:consistency:dg}
      \norm[\strain,h,*]{\mathcal{E}_{\mathrm{b},h}^\dg(r;\cdot)}\lesssim
      \rho^{\frac12} h_T^{m+1}\seminorm[H^{m+1}(\Th)]{r},
    \end{equation}
    where the consistency error linear form $\mathcal{E}_{\mathrm{b},h}^\dg(r;\cdot):\uvec{U}_h^k\to\Real$ is such that, for all $\uvec{v}_h\in\uvec{U}_h^k$,
    \begin{equation}\label{eq:bh.dg:consistency.error}
      \mathcal{E}_{\mathrm{b},h}^\dg(r;\uvec{v}_h)
      \coloneq (\bvec{v}_h,\GRAD r)_{L^2(\Omega)^d} - \mathrm{b}_h(\uvec{v}_h,r_h^\dg).
    \end{equation}
  \item \emph{Boundedness.} It holds, for all $(\uvec{v}_h,q_h)\in\uvec{U}_h^k\times\Poly{k}(\Th)$,
    \[
      |\mathrm{b}_h(\uvec{v}_h,q_h)|
      \lesssim\norm[\strain,h]{\uvec{v}_h}\norm[L^2(\Omega)]{q_h},
    \]
    where the hidden constant depends only on $d$, $k$, and the mesh regularity parameter.
  \end{enumerate}
\end{lemma}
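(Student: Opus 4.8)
\emph{Parts 1, 2, and 5.} I would dispatch the three ``easy'' claims first. Consistency/1 is immediate from \eqref{def:bh} and the commutation property \eqref{eq:Dh.commutation}: $\mathrm{b}_h(\uvec{I}_h^k\bvec{v},q_h)=-(\lproj{k}(\DIV\bvec{v}),q_h)_{L^2(\Omega)}$, and since $q_h\in\Poly{k}(\Th)$ the projector may be dropped, giving $-(\DIV\bvec{v},q_h)_{L^2(\Omega)}=b(\bvec{v},q_h)$. For boundedness I would use \eqref{def:bh} and Cauchy--Schwarz to get $|\mathrm{b}_h(\uvec{v}_h,q_h)|\le\norm[L^2(\Omega)]{\Dh\uvec{v}_h}\norm[L^2(\Omega)]{q_h}$, then bound $\norm[L^2(\Omega)]{\Dh\uvec{v}_h}=\norm[L^2(\Omega)]{\tr\Eh\uvec{v}_h}\le\sqrt d\,\norm[L^2(\Omega)^{d\times d}]{\Eh\uvec{v}_h}\lesssim\norm[\strain,h]{\uvec{v}_h}$, the last inequality obtained by testing \eqref{eq:ET} with $\btens{\tau}=\ET\uvec{v}_T$, integrating by parts, and applying a discrete trace inequality on each face when $k\ge 1$, and from \cite{Botti.Di-Pietro.ea:19*3} when $k=0$. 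For the inf-sup stability, given $q_h\in\Poly{k}(\Th)$ I would set $\bar q_h\coloneq q_h-\lproj[\Omega]{0}q_h\in L^2_0(\Omega)$ and invoke the surjectivity of $\DIV\colon\bvec{U}\to L^2_0(\Omega)$ underlying \eqref{eq:inf-sup:continuous} to pick $\bvec{v}\in\bvec{U}$ with $-\DIV\bvec{v}=\bar q_h$ and $\norm[H^1(\Omega)^d]{\bvec{v}}\lesssim\norm[L^2(\Omega)]{\bar q_h}$; then $\uvec{I}_h^k\bvec{v}\in\uvec{U}_{h,0}^k$, Consistency/1 gives $\mathrm{b}_h(\uvec{I}_h^k\bvec{v},q_h)=b(\bvec{v},q_h)=(\bar q_h,q_h)_{L^2(\Omega)}=\norm[L^2(\Omega)]{\bar q_h}^2$ (the constant part dropping since $\bar q_h$ has zero mean), and dividing by $\norm[\strain,h]{\uvec{I}_h^k\bvec{v}}\lesssim\norm[H^1(\Omega)^d]{\bvec{v}}\lesssim\norm[L^2(\Omega)]{\bar q_h}$ via \eqref{eq:Ih.boundedness} yields \eqref{eq:bh:inf-sup}.

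\emph{Parts 3 and 4: common reduction.} Since $\norm[\strain,h,*]{\cdot}$ is a supremum over $\uvec{U}_{h,0}^k$, I would only need to estimate $\mathcal{E}_{\mathrm{b},h}^\bullet(r;\uvec{v}_h)$ for $\uvec{v}_h\in\uvec{U}_{h,0}^k$, with $\bullet\in\{\hho,\dg\}$. The plan is to insert $\lproj{k}r$ and split, using the bilinearity of $\mathrm{b}_h$,
\[
\mathcal{E}_{\mathrm{b},h}^\bullet(r;\uvec{v}_h)
=\Big[(\bvec{v}_h,\GRAD r)_{L^2(\Omega)^d}-\mathrm{b}_h(\uvec{v}_h,\lproj{k}r)\Big]
-\mathrm{b}_h\big(\uvec{v}_h,\,r_h^\bullet-\lproj{k}r\big),
\]
$r_h^\bullet\in\Poly{k}(\Th)$ being the element component of the solution of the pore-pressure projection problem \eqref{eq:ch.hho.projection} (resp.\ \eqref{eq:ch.dg.projection}). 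The second term I would handle with Boundedness (Part 5), the triangle inequality $\norm[L^2(\Omega)]{r_h^\bullet-\lproj{k}r}\le\norm[L^2(\Omega)]{r_h^\bullet-r}+\norm[L^2(\Omega)]{r-\lproj{k}r}$, the projection-error bound \eqref{eq:ch.hho.projection:err.est} (resp.\ \eqref{eq:ch.dg.projection:err.est}), and the approximation properties of $\lproj{k}$; this is the term responsible for the anisotropy factors $\rho,\rho_T$ (resp.\ $\rho^{\frac12}$) and the $\seminorm[H^{m+2}(T)]{r}$ contribution (resp.\ only the $\seminorm[H^{m+1}(\Th)]{r}$ one).

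\emph{The ``pure'' consistency of $\mathrm{b}_h$.} The first bracket involves neither $\bullet$ nor $\diff$, and here I expect the crux of the proof to lie. Expanding $\mathrm{b}_h(\uvec{v}_h,\lproj{k}r)$ via \eqref{eq:bh}, replacing $(\DIV\bvec{v}_T,\lproj[T]{k}r)_{L^2(T)}$ by $(\DIV\bvec{v}_T,r)_{L^2(T)}$ (legitimate since $\DIV\bvec{v}_T\in\Poly{k}(T)$, trivial for $k=0$), integrating $(\bvec{v}_h,\GRAD r)_{L^2(\Omega)^d}$ by parts element by element, and using that $\sum_{T\in\Th}\sum_{F\in\Fh[T]}(\bvec{v}_F\cdot\normal_{TF},r)_{L^2(F)}=0$ for $\uvec{v}_h\in\uvec{U}_{h,0}^k$ (single-valuedness of $\bvec{v}_F$ across interfaces and $\bvec{v}_F=\bvec0$ on $\Fhb$), one should reach the key identity
\[
(\bvec{v}_h,\GRAD r)_{L^2(\Omega)^d}-\mathrm{b}_h(\uvec{v}_h,\lproj{k}r)
=-\sum_{T\in\Th}\sum_{F\in\Fh[T]}\big((\bvec{v}_F-\bvec{v}_T)\cdot\normal_{TF},\,r-\lproj[T]{k}r\big)_{L^2(F)}.
\]
A face-by-face Cauchy--Schwarz inequality, splitting the natural weight as $h_F^{-\frac12}\cdot h_F^{\frac12}$, then bounds the right-hand side by $\norm[\strain,h]{\uvec{v}_h}$ times $\big(\sum_{T\in\Th}\sum_{F\in\Fh[T]}h_F\norm[L^2(F)]{r-\lproj[T]{k}r}^2\big)^{\frac12}\lesssim h^{m+1}\seminorm[H^{m+1}(\Th)]{r}$ (discrete trace and approximation estimates for $\lproj[T]{k}$), where I use $\big(\sum_{T\in\Th}\sum_{F\in\Fh[T]}h_F^{-1}\norm[L^2(F)^d]{\bvec{v}_F-\bvec{v}_T}^2\big)^{\frac12}\lesssim\norm[\strain,h]{\uvec{v}_h}$ --- immediate for $k\ge1$ from \eqref{eq:norm.strain.h} and via the discrete Korn inequality \eqref{eq:korn.discrete} for $k=0$. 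Combining the two brackets and taking the supremum over $\uvec{U}_{h,0}^k$ would give \eqref{eq:bh:consistency:hho} and \eqref{eq:bh:consistency:dg}.

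\emph{Main obstacle.} The hard part is the boxed identity: getting the sequence of element-wise integrations by parts and cancellations right so that the interface-jump terms and the boundary terms --- where only $\bvec{v}_F$, not $\bvec{v}_T$, vanishes --- collapse, leaving only contributions pairing a face difference $\bvec{v}_F-\bvec{v}_T$ (controlled by the strain seminorm) with a projection residual $r-\lproj[T]{k}r$, with no power of $h$ lost. Once this identity is in hand, the remainder --- in particular propagating the anisotropy ratios from \eqref{eq:ch.hho.projection:err.est} and checking that the pure part of $\mathrm{b}_h$ only contributes the anisotropy-free $\seminorm[H^{m+1}(\Th)]{r}$ term --- is routine bookkeeping.
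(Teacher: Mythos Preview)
Your proof is correct. Parts 1, 2, and 5 match the paper's arguments (your inf-sup argument is exactly the Fortin construction the paper invokes by name). For Parts 3 and 4 you take a genuinely different route: you insert $\lproj{k}r$ and split $\mathcal{E}_{\mathrm{b},h}^\bullet$ into a ``pure'' consistency bracket---which you reduce to face terms pairing $\bvec{v}_F-\bvec{v}_T$ with $r-\lproj[T]{k}r$ via the cancellation $(\DIV\bvec{v}_T,r-\lproj[T]{k}r)_{L^2(T)}=0$---and a second bracket handled by Boundedness and the $L^2$-part of the projection error \eqref{eq:ch.hho.projection:err.est} or \eqref{eq:ch.dg.projection:err.est}. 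The paper instead compares directly with $r$ (no intermediate $\lproj{k}r$): after the same integration by parts and insertion of $\bvec{v}_F$, it obtains both a volume term $(\DIV\bvec{v}_T,r_T^\bullet-r)_{L^2(T)}$ and a face term in $r_T^\bullet-r$, bounds the face contribution via a \emph{continuous} trace inequality on $r_h^\bullet-r$, and ends up needing $\norm[L^2(\Omega)]{r_h^\bullet-r}+h\norm[L^2(\Omega)^d]{\GRADh(r_h^\bullet-r)}$, i.e.\ both terms on the left of \eqref{eq:ch.hho.projection:err.est}. Your decomposition is slightly sharper in that it isolates the anisotropy-free $h^{m+1}\seminorm[H^{m+1}(\Th)]{r}$ contribution and only calls the $L^2$-part of the projection estimate; the paper's route is a bit shorter since it avoids the $\lproj{k}r$ detour and reuses the same computation verbatim for Boundedness. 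Both land on the same final bounds.
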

\begin{proof}
  \underline{1. \emph{Consistency/1.}}
  Recalling the definition \eqref{def:bh} of $\mathrm{b}_h$ and the commutation property \eqref{eq:Dh.commutation} of the discrete divergence we infer, for all $q_h\in\Poly{k}(\Th)$, $\mathrm{b}_h(\uvec{I}_h^k\bvec{v},q_h)=-(\lproj{k}(\DIV\bvec{v}),q_h)_{L^2(\Omega)}=-(\DIV\bvec{v},q_h)_{L^2(\Omega)}=b(\bvec{v},q_h)$, where we have removed $\lproj{k}$ in the second passage by invoking its definition.
  \medskip\\
  \underline{2. \emph{Inf-sup stability.}}
  Combine the boundedness \eqref{eq:Ih.boundedness} of the interpolator with \eqref{eq:bh:consistency:1} and use Fortin's argument (see, e.g., \cite{Fortin:77} and also \cite[Section 5.4]{Boffi.Brezzi.ea:13}) after observing that $\mathrm{b}_h(\uvec{v}_h,\lproj[\Omega]{0}q_h)=0$ for all $\uvec{v}_h\in\uvec{U}_{h,0}^k$.
  \medskip\\
  \underline{3. \emph{Consistency/2.}}
  Let $\uvec{v}_h\in\bvec{U}_{h,0}^k$ be such that $\norm[\strain,h]{\uvec{v}_h}=1$.
  Integrating by parts element by element the first term in the right-hand side of \eqref{eq:bh.hho:consistency.error} and using the continuity of $r$ across interfaces together with the fact that displacement unknowns vanish on boundary faces to insert $\bvec{v}_F$ into the second term, we obtain
  \[
  (\bvec{v}_h,\GRAD r)_{L^2(\Omega)^d}
  = -\sum_{T\in\Th}\left[
    (\DIV\bvec{v}_T,r)_{L^2(T)} + \sum_{F\in\Fh[T]}(\bvec{v}_F-\bvec{v}_T,r\normal_{TF})_{L^2(F)^d}
  \right].
  \]
  Subtracting from this quantity \eqref{eq:bh} written for $q_h=r_h^\hho$, we obtain
  \begin{equation}\label{eq:bh:consistency.2-3:basic}
    \begin{aligned}
      &\mathcal{E}_{\mathrm{b},h}^\hho(r;\uvec{v}_h)
      \\
      &\quad
      =\sum_{T\in\Th}\left[
        (\DIV\bvec{v}_T,r_T^\hho-r)_{L^2(T)}
        + \sum_{F\in\Fh[T]}(\bvec{v}_F-\bvec{v}_T,(r_T^\hho-r)\normal_{TF})_{L^2(F)^d}
        \right]
      \\
      &\quad
      \le\sum_{T\in\Th}\left[
        \norm[L^2(T)]{\DIV\bvec{v}_T}\norm[L^2(T)]{r_T^\hho-r}
        + \sum_{F\in\Fh[T]}h_F^{-\frac12}\norm[L^2(F)^d]{\bvec{v}_F-\bvec{v}_T}~h_T^{\frac12}\norm[L^2(F)]{r_T^\hho-r}
        \right]
      \\
      &\quad
      \le
      \norm[1,h]{\uvec{v}_h}\left(
      \norm[L^2(\Omega)]{r_h^\hho-r}^2
      + \sum_{T\in\Th} h_T\norm[L^2(\partial T)]{r_h^\hho-r}^2
      \right)^{\frac12}
      \\
      &\quad
      \lesssim
      \norm[\strain,h]{\uvec{v}_h}\left(
      \norm[L^2(\Omega)]{r_h^\hho-r}
      + h \norm[L^2(\Omega)^d]{\GRADh(r_h^\hho-r)}
      \right),
    \end{aligned}
  \end{equation}
  where we have used a Cauchy--Schwarz inequality for the first term and a generalised H\"older inequality with exponents $(2,2,\infty)$ along with $\norm[L^\infty(F)^d]{\normal_{TF}}=1$ for the second term in the first bound,
  Cauchy--Schwarz inequalities on the sums along with $h_F\le h_T$ for all $F\in\Fh[T]$ and the definition of the $\norm[1,h]{{\cdot}}$-norm in the second bound,
  and the discrete Korn inequality \eqref{eq:korn.discrete} along with a local continuous trace inequality (cf. \cite[Lemma 1.31]{Di-Pietro.Droniou:20}) to conclude.
  The estimate \eqref{eq:bh:consistency:hho} follows combining \eqref{eq:bh:consistency.2-3:basic} with \eqref{eq:ch.hho.projection:err.est}.
  \medskip\\
  \underline{4. \emph{Consistency/3.}}
  Apply \eqref{eq:ch.dg.projection:err.est} in lieu of \eqref{eq:ch.hho.projection:err.est} to \eqref{eq:bh:consistency.2-3:basic} with $r_h^\hho$ replaced by $r_h^\dg$.
  \medskip\\
  \underline{5. \emph{Boundedness.}}
  Starting from \eqref{eq:bh} and proceeding similarly to \eqref{eq:bh:consistency.2-3:basic}, we can write
  \[
  |\mathrm{b}_h(\uvec{v}_h,q_h)|
  \lesssim\norm[\strain,h]{\uvec{v}_h}\left(
  \norm[L^2(\Omega)]{q_T}^2 + \sum_{T\in\Th} h_T\norm[L^2(\partial T)]{q_T}^2
  \right)^{\frac12}
  \lesssim\norm[\strain,h]{\uvec{v}_h}\norm[L^2(\Omega)]{q_h},
  \]
  where the conclusion follows from the local discrete trace inequality of \cite[Lemma 1.32]{Di-Pietro.Droniou:20}.
\end{proof}


\subsection{Discrete problems}

We consider in this section two HHO schemes using, respectively, a DG and an HHO discretisation of the Darcy term.
For all $1\le n\le N$ and $V$ vector space, define the time interpolator $I_\tau^n:L^2(t^{n-1},t^n;V)\to V$ such that, for all $\varphi\in L^2(t^{n-1},t^n;V)$,
\begin{equation}\label{eq:I.tau.n}
  I_\tau^n\varphi\coloneq\frac1\tau\int_{t^{n-1}}^{t^n}\varphi(t)\ud t.
\end{equation}
In both cases, we consider the following approximations of the source terms, corresponding to a lowest-order DG method in time:
For all $1\le n\le N$,
\begin{equation}\label{eq:overline.fhn.ghn}
  \overline{\bvec{f}}^n\coloneq I_\tau^n\bvec{f},\qquad
  \overline{g}^n\coloneq I_\tau^n g.
\end{equation}
\begin{problem}[HHO-HHO scheme]\label{prob:hho-hho}
  Let $k\ge 0$.
  The families $\uvec{u}_{h\tau}\coloneq(\uvec{u}_h^n)_{0\le n\le N}\in(\uvec{U}_{h,0}^k)^{N+1}$ and $\underline{p}_{h\tau}\coloneq(\underline{p}_h^n)_{0\le n\le N}\in(\underline{P}_h^k)^{N+1}$ are such that, for $n=1,\ldots, N$,
  $$
    \begin{aligned}
      \mathrm{a}_h(\uvec{u}_h^n,\uvec{v}_h) + \mathrm{b}_h(\uvec{v}_h,p_h^n) 
      &= (\overline{\bvec{f}}^n,\bvec{v}_h)_{L^2(\Omega)^d}
      &\qquad& \forall\uvec{v}_h\in\uvec{U}_{h,0}^k,
      \\ 
      C_0(\delta_t^n p_{h\tau},q_h)_{L^2(\Omega)} - \mathrm{b}_h(\delta_t^n\uvec{u}_{h\tau},q_h) 
      + \mathrm{c}_h^\hho(\underline{p}_h^n,\underline{q}_h) &= (\overline{g}^n,q_h)_{L^2(\Omega)}
      &\qquad& \forall\underline{q}_h\in\underline{P}_h^k,
    \end{aligned}
  $$
  and, if $C_0=0$, for all $1\le n\le N$,
    \[
    \int_\Omega p_h^n(\bvec{x})\ud\bvec{x}=0,
    \]
    where $\bvec{v}_h$ is defined from $\uvec{v}_h$ according to \eqref{eq:vh}, while $p_h^n$ and $q_h$ are respectively 
    defined from $\underline{p}_h^n$ and $\underline{q}_h$ according to \eqref{eq:qh}.
    The initial values of the discrete displacement and pore pressure are chosen such that
    \[
    C_0(p_h^0,q_h)_{L^2(\Omega)} - \mathrm{b}_h(\uvec{u}_h^0,q_h) = (\phi^0,q_h)_{L^2(\Omega)}\qquad\forall q_h\in P_h^k.
    \]
\end{problem}
\begin{table}[h]
  \caption{Interpretation of the HHO-HHO method within the abstract framework of Section \ref{sec:abstract.framework}
           \label{tab:interpretation_hho}}
  \centering
  \begin{small}
    \renewcommand\arraystretch{1.2}
    \begin{tabular}{ccc}
      \toprule
      Abstract object & Instance & Definition
      \\
      \midrule
      $\bvec{\abs{U}}_h$, $\norm[\bvec{\abs{U}},h]{{\cdot}}$ & $\uvec{U}_h^k$, $\norm[\strain,h]{{\cdot}}$ & \eqref{eq:Uhk0}, \eqref{eq:norm.strain.h}
      \\
      $\abs{P}_h$, $\norm[\abs{P},h]{{\cdot}}$ & $\underline{P}_h^k$, $\norm[\hho,h]{{\cdot}}$ & \eqref{eq:Phk.hho}, \eqref{eq:norm.hho.h}
      \\
      $\abs{r}_h$ & $\underline{P}_h^k\ni\underline{q}_h\mapsto q_h\in\Poly{k}(\Th)\hookrightarrow L^2(\Omega)$ & \eqref{eq:qh}
      \\
      $\abs{L}_h$ & $\Poly{k}(\Th)$ & \eqref{eq:qh}      
      \\
      $\abs{a}_h$, $\widetilde{\abs{b}}_h$, $\abs{c}_h$ & $\mathrm{a}_h$, $\mathrm{b}_h$, $\mathrm{c}_h^\dg$ & \eqref{eq:ah}, \eqref{eq:bh}, \eqref{eq:ch.dg}
      \\
      $\underline{\alpha}$, $\overline{\alpha}$ & $C_{\mathrm{a}}^{-1}2\mu$, $C_{\mathrm{a}}(2\mu+d\lambda^+)$ & \eqref{eq:ah:coercivity.boundedness}
      \\
      $\beta$ & $\beta$ & \eqref{eq:bh:inf-sup}      
      \\
      $\gamma$ & 1 & ---      
      \\
      $(\abs{f}_h^n,\abs{g}_h^n)_{1\le n\le N}$ & $(\overline{\bvec{f}}^n,\overline{g}^n)_{1\le n\le N}$ & \eqref{eq:overline.fhn.ghn}
      \\
      \bottomrule
    \end{tabular}
  \end{small}
\end{table}
\begin{problem}[HHO-DG scheme]\label{prob:hho-dg}
  Let $k\ge 1$.
  The families $\uvec{u}_{h\tau}\coloneq(\uvec{u}_h^n)_{0\le n\le N}\in(\uvec{U}_{h,0}^k)^{N+1}$ and $p_{h\tau}\coloneq(p_h^n)_{0\le n\le N}\in(P_h^k)^{N+1}$ with $P_h^k\coloneq\Poly{k}(\Th)$ are such that, for $n=1,\ldots, N$,
  $$  
    \begin{aligned}
      \mathrm{a}_h(\uvec{u}_h^n,\uvec{v}_h) + \mathrm{b}_h(\uvec{v}_h,p_h^n) 
      &= (\overline{\bvec{f}}^n,\bvec{v}_h)_{L^2(\Omega)^d}
      &\qquad& \forall\uvec{v}_h\in\uvec{U}_{h,0}^k,
      \\ 
      C_0(\delta_t^n p_{h\tau},q_h)_{L^2(\Omega)} - \mathrm{b}_h(\delta_t^n\uvec{u}_{h\tau},q_h) 
      + \mathrm{c}_h^\dg(p_h^n,q_h) &= (\overline{g}^n,q_h)_{L^2(\Omega)}
      &\qquad& \forall q_h\in P_h^k,
    \end{aligned}
  $$
    and, if $C_0=0$, for all $1\le n\le N$,
    \[
    \int_\Omega p_h^n(\bvec{x})\ud\bvec{x}=0.
    \]
    The initial values of the discrete displacement and pore pressure are chosen such that
    \[
    C_0(p_h^0,q_h)_{L^2(\Omega)} - \mathrm{b}_h(\uvec{u}_h^0,q_h) = (\phi^0,q_h)_{L^2(\Omega)}\qquad\forall q_h\in P_h^k.
    \]
\end{problem}
\begin{table}[h]
  \caption{Interpretation of the HHO-DG method within the abstract framework of Section \ref{sec:abstract.framework}
           \label{tab:interpretation_dg}}
  \centering
  \begin{small}
    \renewcommand\arraystretch{1.2}
    \begin{tabular}{ccc}
      \toprule
      Abstract object & Instance & Definition
      \\
      \midrule
      $\bvec{\abs{U}}_h$, $\norm[\bvec{\abs{U}},h]{{\cdot}}$ & $\uvec{U}_h^k$, $\norm[\strain,h]{{\cdot}}$ & \eqref{eq:Uhk0}, \eqref{eq:norm.strain.h}
      \\
      $\abs{P}_h$, $\norm[\abs{P},h]{{\cdot}}$ & $P_h^k=\Poly{k}(\Th)$, $\norm[\dg,h]{{\cdot}}$ & Problem \ref{prob:hho-dg}, \eqref{eq:norm.dg.h}
      \\
      $\abs{r}_h$ & $P_h^k\hookrightarrow L^2(\Omega)$ & ---
      \\
      $\abs{L}_h$ & $\Poly{k}(\Th)$ & ---
      \\
      $\abs{a}_h$, $\abs{b}_h$, $\abs{c}_h$ & $\mathrm{a}_h$, $\mathrm{b}_h$, $\mathrm{c}_h^\hho$ & \eqref{eq:ah}, \eqref{eq:bh}, \eqref{eq:ch.hho}
      \\
      $\underline{\alpha}$, $\overline{\alpha}$ & $C_{\mathrm{a}}^{-1}2\mu$, $C_{\mathrm{a}}(2\mu+d\lambda^+)$ & \eqref{eq:ah:coercivity.boundedness}
      \\
      $\beta$ & $\beta$ & \eqref{eq:bh:inf-sup}
      \\
      $\gamma$ & $\gamma^\dg$ & cf. Section \ref{sec:dg.darcy}
      \\
      $(\abs{f}_h^n,\abs{g}_h^n)_{1\le n\le N}$ & $(\overline{\bvec{f}}^n,\overline{g}^n)_{1\le n\le N}$ & \eqref{eq:overline.fhn.ghn}
      \\
      \bottomrule
    \end{tabular}
  \end{small}
\end{table}


\section{Error analysis}\label{sec:error.analysis}

The family $\hat{\uvec{u}}_{h\tau}=(\hat{\uvec{u}}_h^n)_{0\le n\le N}$ of displacement interpolates is defined setting
\begin{equation}\label{eq:hat.uh}
  \text{
    $\hat{\uvec{u}}_h^0\coloneq\uvec{I}_h^k\bvec{u}(0)$ and
    $\hat{\uvec{u}}_h^n\coloneq I_\tau^n(\uvec{I}_h^k\bvec{u})$ for all $1\le n\le N$.
  }
\end{equation}
For the HHO-HHO method, denoting, for a.e. $t\in[0,\tF]$, by $\underline{\mathfrak{p}}_h^\hho(t)\in\underline{P}_h^k$ the solution of the HHO projection problem \eqref{eq:ch.hho.projection} with $r=p(t)$, the family $\hat{\underline{p}}_{h\tau}^\hho=(\hat{\underline{p}}_h^{\hho,n})_{0\le n\le N}$ of pore pressure interpolates is given by
\begin{equation}\label{eq:hat.ph.hho}
  \text{
    $\hat{\underline{p}}_h^{\hho,0}\coloneq\underline{I}_h^k p(0)$ and
    $\hat{\underline{p}}_h^{\hho,n}\coloneq I_\tau^n\underline{\mathfrak{p}}^\hho_h$ for all $1\le n\le N$.
  }
\end{equation}
We remind the reader that $\hat{p}_{h\tau}^\hho=(\hat{p}_h^{\hho,n})_{0\le n\le N}\in\Poly{k}(\Th)^{N+1}$ is the family of broken polynomial fields obtained from $\hat{\underline{p}}_{h\tau}^\hho$ by patching the element-based pore pressure unknowns according to \eqref{eq:qh}.
Similarly, denoting, for a.e. $t\in[0,\tF]$, by $\mathfrak{p}^\dg_h(t)\in P_h^k$ the solution of the DG projection problem \eqref{eq:ch.dg.projection} with $r=p(t)$ we define the family $\hat{p}_{h\tau}^\dg=(\hat{p}_h^n)_{0\le n\le N}$ of pore pressure interpolates for the HHO-DG method setting
\begin{equation}\label{eq:hat.ph.dg}
  \text{
    $\hat{p}_h^{\dg,0}\coloneq\lproj[h]{k} p(0)$ and
    $\hat{p}_h^{\dg,n}\coloneq I_\tau^n\mathfrak{p}^\dg_h$ for all $1\le n\le N$.
  }
\end{equation}
\begin{theorem}[Error estimates]\label{thm:error.estimate}
  Denote by $(\uvec{u},p)\in L^2(0,\tF;\bvec{U})\times L^2(0,\tF;P)$ the unique solution to \eqref{eq:weak_form}.
  The following error estimates hold:
  \begin{enumerate}
  \item \emph{HHO-HHO scheme.}
    Let $k\ge 0$ and assume $\bvec{u}\in L^2(0,\tF;H^{m+2}(\Th)^d)\cap H^1(0,\tF;H^1(\Omega)^d)$ and $p\in L^2(0,\tF;H^{m+2}(\Th))\cap H^1(0,\tF;L^2(\Omega))$ for some $m\in\{0,\ldots,k\}$.
    Let $(\uvec{u}_{h\tau},\underline{p}_{h\tau})\in(\uvec{U}_{h,0}^k\times\underline{P}_h^k)^{N+1}$ solve 
    Problem \ref{prob:hho-hho} and define the discrete errors
    \[
    \uvec{e}_{h\tau}\coloneq\uvec{u}_{h\tau} - \hat{\uvec{u}}_{h\tau},\qquad 
    \underline{\epsilon}_{h\tau}\coloneq\underline{p}_{h\tau}-\hat{\underline{p}}_{h\tau}^\hho,\qquad
    \text{
      $\underline{z}_h^0\coloneq\underline{0}$ and
      $\underline{z}_h^n\coloneq\sum_{i=1}^n\tau\underline{\epsilon}_h^i$ for all $1\le n\le N$.
    }
    \]
    Then, it holds
    \begin{multline}\label{eq:err.est_hho}
      2\mu\sum_{n=1}^N\tau\norm[\strain,h]{\uvec{e}_h^n}^2
      + \sum_{n=1}^N\tau\left(
      C_0\norm[L^2(\Omega)]{\epsilon_h^n}^2
      +\frac{\mu\beta^2}{(2\mu+d\lambda^+)^2}\norm[L^2(\Omega)]{\epsilon_h^n-\lproj[\Omega]{0}\epsilon_h^n}^2
      \right)
      + \norm[\hho,h]{\underline{z}_h^N}^2
      \\
      \lesssim 
      h^{2(m+1)}\left(
      \mathcal{C}_1\mathfrak{N}_{\bvec{u}} + (\mathcal{C}_1 + \mathcal{C}_2 C_0^2 + 4C_0)\mathfrak{N}_p^\hho
      \right)
      + \tau^2\mathfrak{M};
    \end{multline}
  \item \emph{HHO-DG scheme.}
    Let $k\ge 1$ and assume $\bvec{u}\in L^2(0,\tF;H^{m+2}(\Th)^d)\cap H^1(0,\tF;H^1(\Omega)^d)$ and $p\in L^2(0,\tF;H^{m+1}(\Th))\cap H^1(0,\tF;L^2(\Omega))$ for some $m\in\{0,\ldots,k\}$.
  Further assume elliptic regularity (that is, $\Omega$ convex and $\diff\in\Poly{0}(\Omega)^{d\times d}$),
  let $(\uvec{u}_{h\tau},p_{h\tau})\in(\uvec{U}_{h,0}^k\times P_h^k)^{N+1}$ solve Problem \ref{prob:hho-dg}, and define the discrete errors
  \begin{equation}\label{eq:errors.dg}
    \uvec{e}_{h\tau}\coloneq\uvec{u}_{h\tau} - \hat{\uvec{u}}_{h\tau},\qquad
    \epsilon_{h\tau}\coloneq p_{h\tau}-\hat{p}_{h\tau}^\dg,\qquad
    \text{
      $z_h^0\coloneq 0$ and
      $z_h^n\coloneq\sum_{i=1}^n\tau\epsilon_h^i$ for all $1\le n\le N$.
    }
  \end{equation}
  Then, it holds
  \begin{multline}\label{eq:err.est_dg}
    2\mu\sum_{n=1}^N\tau\norm[\strain,h]{\uvec{e}_h^n}^2
    + \sum_{n=1}^N\tau\left(
    C_0\norm[L^2(\Omega)]{\epsilon_h^n}^2
    + \frac{\mu\beta^2}{(2\mu+d\lambda^+)^2}\norm[L^2(\Omega)]{\epsilon_h^n-\lproj[\Omega]{0}\epsilon_h^n}^2
    \right)
    + \norm[\dg,h]{z_h^N}^2
    \\
    \lesssim
    h^{2(m+1)}\left(
    \mathcal{C}_1\mathfrak{N}_{\bvec{u}} + (\mathcal{C}_1 + \mathcal{C}_2 C_0^2 + 4C_0)\mathfrak{N}_p^\dg
    \right)
    + \tau^2 \mathfrak{M}.
  \end{multline}
  \end{enumerate}
  In \eqref{eq:err.est_hho} and \eqref{eq:err.est_dg}, the hidden constants are independent of $h$, $\tau$, $\mu$, $\lambda$, $C_0$, $\diff$, $\bvec{u}$, and $p$,
  the constants $\mathcal{C}_1, \mathcal{C}_2$ are defined by \eqref{eq:C1.C2.C3} according to 
  Table \ref{tab:interpretation_hho} for the HHO-HHO scheme and according to Table \ref{tab:interpretation_dg} 
  for the HHO-DG scheme,
  and we have introduced the following bounded norms of the continuous solution:
  \begin{align}\label{eq:N.sp.u}
    \mathfrak{N}_{\bvec{u}}
    &\coloneq
    \norm[L^2(0,\tF;H^{m+1}(\Th)^{d\times d})]{\btens{\sigma}(\GRADs\bvec{u})}^2
    + \mu\norm[L^2(0,\tF;H^{m+2}(\Th)^d)]{\bvec{u}}^2,
    \\ \label{eq:N.sp.p}
    \mathfrak{N}_p^\bullet
    &\coloneq\begin{cases}
    \rho^2\sum_{T\in\Th}\rho_T\norm[L^2(0,\tF;H^{m+2}(T))]{p}^2 + \rho^2\norm[L^2(0,\tF;H^{m+1}(\Th))]{p}^2
    & \text{if $\bullet=\hho$},
    \\
    \rho\norm[L^2(0,\tF;H^{m+1}(\Th))]{p}^2 & \text{if $\bullet=\dg$},
    \end{cases}
    \\ \nonumber 
    \mathfrak{M}
    &\coloneq
    (\mathcal{C}_2 C_0^2+4C_0)\norm[H^1(0,\tF;L^2(\Omega))]{p}^2+\mathcal{C}_2\norm[H^1(0,\tF;L^2(\Omega))]{\DIV\bvec{u}}^2.
  \end{align}
\end{theorem}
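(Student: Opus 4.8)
The plan is to obtain both estimates as instances of the abstract error bound of Lemma~\ref{lem:abs:err.est}, after checking that Assumptions~\ref{ass:discrete.setting}--\ref{ass:interpolate} hold with the identifications collected in Tables~\ref{tab:interpretation_hho} and~\ref{tab:interpretation_dg}. Coercivity and boundedness of $\abs{a}_h=\mathrm{a}_h$ are \eqref{eq:ah:coercivity.boundedness} (so that $\underline\alpha=C_{\mathrm a}^{-1}2\mu$ and $\overline\alpha=C_{\mathrm a}(2\mu+d\lambda^+)$); coercivity and symmetry of $\abs{c}_h$ hold for $\mathrm{c}_h^\hho$ and, for $\eta$ large enough, for $\mathrm{c}_h^\dg$; the factorisation of $\abs{b}_h$ through $\abs{r}_h$ is built into \eqref{eq:bh}; and the abstract inf-sup condition \eqref{eq:abs.bh:inf-sup} is exactly \eqref{eq:bh:inf-sup}. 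Choosing the source terms and initial condition as in \eqref{eq:overline.fhn.ghn} with $\varphi^0=\phi^0$, the time interpolator $\abs{I}^n=I_\tau^n$ of \eqref{eq:I.tau.n}, and the interpolates \eqref{eq:hat.uh}, \eqref{eq:hat.ph.hho} and \eqref{eq:hat.ph.dg}, Assumption~\ref{ass:interpolate} holds; in particular $\abs{r}_h\hat{p}_h^0$ is the $L^2(\Omega)$-projection of $p(0)$ onto $\abs{L}_h=\Poly{k}(\Th)$. It then remains to estimate the three residuals $\mathcal{E}_{0,h}$, $(\mathcal{E}_{\bvec{\abs{U}},h}^n)_{1\le n\le N}$ and $(\mathcal{E}_{\abs{P},h\tau}^n)_{1\le n\le N}$ appearing in \eqref{eq:abs:err.est}.

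First, $\mathcal{E}_{0,h}=0$: inserting the continuous initial condition $\phi^0=C_0 p(0)+\DIV\bvec{u}(0)$ (i.e.\ \eqref{eq:biot:strong:initial} with $C_{\rm bw}=1$), the defining property of $\hat{p}_h^0$, and the identity $\abs{b}_h(\hat{\bvec{u}}_h^0,q_h)=\mathrm{b}_h(\uvec{I}_h^k\bvec{u}(0),q_h)=-(\DIV\bvec{u}(0),\abs{r}_h q_h)_{L^2(\Omega)}$ coming from \eqref{eq:bh:consistency:1}, all contributions in the definition of $\mathcal{E}_{0,h}$ cancel. Second, for the mechanical residual \eqref{eq:abs:EUhn} I use linearity of $I_\tau^n$, of the bilinear forms and of the $L^2$-product, together with the fact that, by Theorem~\ref{thm:reg_est} (and the remark on the regularity of the flux), the strong relation $-\DIV\btens{\sigma}(\GRADs\bvec{u}(t))+\GRAD p(t)=\bvec{f}(t)$ holds in $L^2(\Omega)^d$ for a.e.\ $t$; since $\mathfrak{p}_h(t)$ solves the pore-pressure projection problem \eqref{eq:hat.p} --- which is \eqref{eq:ch.hho.projection} for the HHO-HHO scheme (Problem~\ref{prob:hho-hho}) and \eqref{eq:ch.dg.projection} for the HHO-DG scheme (Problem~\ref{prob:hho-dg}) --- with $r=p(t)$, one finds that, for a.e.\ $t$, the integrand of the time average defining $\mathcal{E}_{\bvec{\abs{U}},h}^n$ equals $\mathcal{E}_{\mathrm{a},h}(\bvec{u}(t);\cdot)+\mathcal{E}_{\mathrm{b},h}^\bullet(p(t);\cdot)$, with $\bullet\in\{\hho,\dg\}$. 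A Jensen inequality in time gives $\sum_{n=1}^N\tau\norm[\strain,h,*]{\mathcal{E}_{\bvec{\abs{U}},h}^n}^2\le 2\int_0^{\tF}\big(\norm[\strain,h,*]{\mathcal{E}_{\mathrm{a},h}(\bvec{u}(t);\cdot)}^2+\norm[\strain,h,*]{\mathcal{E}_{\mathrm{b},h}^\bullet(p(t);\cdot)}^2\big)\ud t$, and \eqref{eq:ah:consistency} together with \eqref{eq:bh:consistency:hho} (resp.\ \eqref{eq:bh:consistency:dg}) bounds the right-hand side by $h^{2(m+1)}$ times a combination of $\mathfrak{N}_{\bvec{u}}$ and $\mathfrak{N}_p^\bullet$, the $2\mu$-weighted stabilisation part of \eqref{eq:ah:consistency} being reabsorbed into $\mathfrak{N}_{\bvec{u}}$ via the coercivity of $\btens{C}$.

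The heart of the argument is the flow residual $\mathcal{E}_{\abs{P},h\tau}^n$ of \eqref{eq:abs:EPhtaun}, which must be estimated so that it stays bounded as $C_0\to 0$. I use the identity $\abs{g}^i+\DIV(\diff\GRAD I_\tau^i p)=C_0 I_\tau^i(\dt p)+I_\tau^i(\dt(\DIV\bvec{u}))$ noted after Lemma~\ref{lem:abs:err.est} (a consequence of \eqref{eq:biot:strong:flow} and of the commutation of $I_\tau^i$ with the spatial operators $\DIV(\diff\GRAD\cdot)$ and $\DIV$), and rewrite $\abs{b}_h(\delta_t^i\hat{\bvec{u}}_{h\tau},q_h)=-(\delta_t^i\psi_\tau,\abs{r}_h q_h)_{L^2(\Omega)}$ through \eqref{eq:bh:consistency:1}, where $\psi_\tau=(\psi^j)_{0\le j\le N}$ has $\psi^0=\DIV\bvec{u}(0)$ and $\psi^j=I_\tau^j(\DIV\bvec{u})$ for $j\ge 1$. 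The inner sum over $1\le i\le n$ then telescopes and leaves, for all $q_h\in\abs{P}_h$, $(\mathcal{E}_{\abs{P},h\tau}^n,\abs{r}_h q_h)_{L^2(\Omega)}=C_0(p(t^n)-\abs{r}_h\hat{p}_h^n,\abs{r}_h q_h)_{L^2(\Omega)}+(\DIV\bvec{u}(t^n)-I_\tau^n(\DIV\bvec{u}),\abs{r}_h q_h)_{L^2(\Omega)}$, so $\mathcal{E}_{\abs{P},h\tau}^n$ is the $L^2(\Omega)$-projection onto $\abs{L}_h$ of this bracket. Splitting $p(t^n)-\abs{r}_h\hat{p}_h^n=(p(t^n)-I_\tau^n p)+I_\tau^n(p-p_h^\bullet)$, and similarly $\DIV\bvec{u}(t^n)-I_\tau^n(\DIV\bvec{u})$, separates a temporal-consistency part, controlled after $\sum_n\tau(\cdot)^2$ by $\tau^2\norm[H^1(0,\tF;L^2(\Omega))]{p}^2$ and $\tau^2\norm[H^1(0,\tF;L^2(\Omega))]{\DIV\bvec{u}}^2$ through $\norm[L^2(\Omega)]{\chi(t^n)-I_\tau^n\chi}^2\le\tau\int_{t^{n-1}}^{t^n}\norm[L^2(\Omega)]{\dt\chi}^2\ud t$, from a spatial-projection part $\sum_n\tau\norm[L^2(\Omega)]{I_\tau^n(p-p_h^\bullet)}^2\le\int_0^{\tF}\norm[L^2(\Omega)]{p(t)-p_h^\bullet(t)}^2\ud t\lesssim h^{2(m+1)}\mathfrak{N}_p^\bullet$, the last bound coming from \eqref{eq:ch.hho.projection:err.est} (resp.\ \eqref{eq:ch.dg.projection:err.est}) applied with $r=p(t)$. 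For the contributions weighted by $\mathcal{C}_3=4/C_0$ only the temporal part survives, because $\DIV\bvec{u}(t)$ has zero mean over $\Omega$ and the pore-pressure projections preserve the spatial mean, $\int_\Omega p_h^\bullet(\cdot,t)=\int_\Omega p(\cdot,t)$; the $C_0=0$ conventions are then legitimate by the compatibility conditions \eqref{eq:compatibility}.

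Finally, substituting these three bounds into \eqref{eq:abs:err.est} with $\mathcal{C}_1$, $\mathcal{C}_2$, $\mathcal{C}_3$ from \eqref{eq:C1.C2.C3} read through the relevant table, using $\mathcal{E}_{0,h}=0$, replacing $\underline\alpha=C_{\mathrm a}^{-1}2\mu$ and absorbing $C_{\mathrm a}$ (and $\gamma^\dg$ in the DG case) into the generic constant, and collecting the powers of $h$ and $\tau$, the right-hand side is bounded by $h^{2(m+1)}\big(\mathcal{C}_1\mathfrak{N}_{\bvec{u}}+(\mathcal{C}_1+\mathcal{C}_2 C_0^2+4C_0)\mathfrak{N}_p^\bullet\big)+\tau^2\mathfrak{M}$, which is \eqref{eq:err.est_hho} for the HHO-HHO scheme and \eqref{eq:err.est_dg} for the HHO-DG scheme (the latter invoking the elliptic-regularity hypothesis, needed for \eqref{eq:ch.dg.projection:err.est} and \eqref{eq:bh:consistency:dg}). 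I expect the main obstacle to be precisely the $C_0$-robust treatment of $\mathcal{E}_{\abs{P},h\tau}^n$: deriving the telescoping identity and then carefully splitting spatial from temporal errors while keeping track of the zero-mean structure, so that no negative power of $C_0$ is incurred --- the step where one genuinely exploits that the flow equation has been integrated in time at the abstract level.
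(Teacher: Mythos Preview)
Your proposal is correct and follows essentially the same route as the paper's proof: verify Assumptions~\ref{ass:discrete.setting}--\ref{ass:interpolate}, show $\mathcal{E}_{0,h}=0$ via \eqref{eq:bh:consistency:1} and \eqref{eq:weak_form.initial}, reduce the mechanical residual to the time average of $\mathcal{E}_{\mathrm{a},h}(\bvec{u}(t);\cdot)+\mathcal{E}_{\mathrm{b},h}^\bullet(p(t);\cdot)$, and then telescope the flow residual to $C_0(p(t^n)-\abs{r}_h\hat{p}_h^n)+(\DIV\bvec{u}(t^n)-I_\tau^n\DIV\bvec{u})$ before splitting into spatial and temporal parts.

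Two cosmetic differences are worth recording. First, the paper splits $\mathcal{E}_{\abs{P},h\tau}^n=\mathcal{E}_{P,h}^{\bullet,n}+\mathcal{E}_{P,\tau}^{\bullet,n}$ \emph{before} telescoping each piece separately, whereas you telescope first and then split $p(t^n)-\abs{r}_h\hat{p}_h^n=(p(t^n)-I_\tau^n p)+I_\tau^n(p-\mathfrak{p}_h^\bullet)$; the two computations lead to exactly the same three summands. Second, for the $\mathcal{C}_3$-weighted term the paper simply bounds $\norm[L^2(\Omega)]{\lproj[\Omega]{0}\mathcal{E}_{P,h}^{\bullet,n}}$ by $\norm[L^2(\Omega)]{\mathcal{E}_{P,h}^{\bullet,n}}$, which produces the $4C_0\,\mathfrak{N}_p^\bullet$ contribution in \eqref{eq:err.est_hho}--\eqref{eq:err.est_dg}; your observation that $\int_\Omega\mathfrak{p}_h^\bullet(t)=\int_\Omega p(t)$ kills this term outright and is actually slightly sharper than what is stated, but of course still proves the theorem as written.
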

%
%
\begin{remark}[Time regularity assumption]
The requirements $\bvec{u}\in H^1(0,\tF;H^1(\Omega)^d)$ and $p\in H^1(0,\tF;L^2(\Omega))$ in Theorem \ref{thm:error.estimate} are consistent with the regularity results established by Theorem \ref{thm:reg_est}. Moreover, we remark that the error estimates also hold under the weaker assumption that the maps $t\mapsto p(t)$ and $t\mapsto\DIV\bvec{u}(t)$ are only piecewise $H^1$-regular on the time mesh.
\end{remark}
\begin{proof}[Proof of Theorem \ref{thm:error.estimate}]
  The proof proceeds in several steps: first, we estimate the residual on the mechanical equilibrium equation; then, we split the residual of the flow equation into components that yield purely spatial and temporal errors; finally, we invoke Lemma \ref{lem:abs:err.est} to obtain a basic estimate of the error in terms of these residuals.
  In what follows, the superscript $\bullet\in\{\hho,\dg\}$ is used for method-specific quantities when needed.
  Hidden constants in the inequalities have the same dependencies as in the theorem statement.
  \medskip\\
  \underline{1. \emph{Estimate of the residual on the mechanical equilibrium equation}.}
  For any $1\le n\le N$, the residual $\mathcal{E}_{\bvec{U},h}^{\bullet,n}:\uvec{U}_{h,0}^k\to\Real$ on the mechanical equilibrium equation is defined as follows (cf. \eqref{eq:abs:EUhn}):
  For all $\uvec{v}_h\in\uvec{U}_{h,0}^k$,
  \[
  \mathcal{E}_{\bvec{U},h}^{\bullet,n}(\uvec{v}_h)
  \coloneq (\overline{\bvec{f}}^n,\bvec{v}_h)_{L^2(\Omega)^d}
  - \mathrm{a}_h(\hat{\uvec{u}}_h^n,\uvec{v}_h)
  - \mathrm{b}_h(\uvec{v}_h,\hat{p}_h^{\bullet,n}).
  \]
  We next prove that
  \begin{equation}\label{eq:est.mechanics.residual}
    \boxed{
      \sum_{n=1}^N\tau\norm[\strain,h,*]{\mathcal{E}_{\bvec{U},h}^{\bullet,n}}^2\lesssim
      h^{2(m+1)}\left(
      \mathfrak{N}_{\bvec{u}}
      + \mathfrak{N}_p^\bullet
      \right).
    }
  \end{equation}
  We have, for all $\uvec{v}_h\in\uvec{U}_{h,0}^k$ such that $\norm[\strain,h]{\uvec{v}_h}=1$ and all $1\le n\le N$,
  \[
  \begin{aligned}
    \mathcal{E}_{\bvec{U},h}^{\bullet,n}(\uvec{v}_h)
    &=\frac1\tau\int_{t^{n-1}}^{t^n}\left(
    (\bvec{f}(t),\bvec{v}_h)_{L^2(\Omega)^d}
    - \mathrm{a}_h(\uvec{I}_h^k\bvec{u}(t),\uvec{v}_h)
    - \mathrm{b}_h(\uvec{v}_h,\mathfrak{p}_h^{\bullet}(t))
    \right)\ud t
    \\
    &=\frac1\tau\int_{t^{n-1}}^{t^n}\left(
    - (\DIV\btens{\sigma}(\GRADs\bvec{u}(t)),\bvec{v}_h)_{L^2(\Omega)^d}
    - \mathrm{a}_h(\uvec{I}_h^k\bvec{u}(t),\uvec{v}_h)
    + (\GRAD p(t),\bvec{v}_h)_{L^2(\Omega)^d}    
    - \mathrm{b}_h(\uvec{v}_h,\mathfrak{p}_h^{\bullet}(t))
    \right)\ud t
    \\
    &=\frac1\tau\int_{t^{n-1}}^{t^n}\left(
    \mathcal{E}_{\mathrm{a},h}(\bvec{u}(t);\uvec{v}_h)
    + \mathcal{E}_{\mathrm{b},h}^{\bullet}(p(t);\uvec{v}_h)
    \right)\ud t,
  \end{aligned}
  \]
  where we have used the definition \eqref{eq:I.tau.n} of the time interpolator in the first equality,
  the fact that \eqref{eq:biot:strong:mechanics} holds for a.e. $t\in(0,\tF)$ and $\bvec{x}\in\Omega$ to substitute $\bvec{f}$ with the left-hand side of this equation (recall that we have assumed $C_{\rm bw}=1$),
  and the definitions \eqref{eq:ah:consistency.error} of $\mathcal{E}_{\mathrm{a},h}$ and \eqref{eq:bh.hho:consistency.error} (if $\bullet=\hho$) or \eqref{eq:bh.dg:consistency.error} (if $\bullet=\dg$) of $\mathcal{E}_{\mathrm{b},h}^\bullet$ to conclude.
  We continue taking the absolute value, using the consistency estimate \eqref{eq:ah:consistency}, the definition of the dual norm $\norm[\strain,h,*]{{\cdot}}$, and \eqref{eq:bh:consistency:hho} (if $\bullet=\hho)$ or \eqref{eq:bh:consistency:dg} (if $\bullet=\dg$) to obtain
  \begin{equation}\label{eq:est.Euh.t}
    \norm[\strain,h,*]{\mathcal{E}_{\bvec{U},h}^{\bullet,n}}
    \lesssim\frac{h^{m+1}}{\tau}\int_{t^{n-1}}^{t^n} \mathcal{N}_{\bvec{u},p}^\bullet(t)\ud t
  \end{equation}
  with, for a.e. $t\in(0,\tF)$,
  \[
  \mathcal{N}_{\bvec{u},p}^\hho(t)^2\coloneq
  \seminorm[H^{m+1}(\Th)^{d\times d}]{\btens{\sigma}(\GRADs\bvec{u}(t))}^2
  + 2\mu\seminorm[H^{m+2}(\Th)^d]{\bvec{u}(t)}^2
  + \rho^2\left(
  \sum_{T\in\Th}\rho_T\seminorm[H^{m+2}(T)]{p(t)}^2
  + \seminorm[H^{m+1}(\Th)]{p(t)}^2
  \right)
  \]
  and
  \[
  \mathcal{N}_{\bvec{u},p}^\dg(t)^2\coloneq
  \seminorm[H^{m+1}(\Th)^{d\times d}]{\btens{\sigma}(\GRADs\bvec{u}(t))}^2
  + 2\mu\seminorm[H^{m+2}(\Th)^d]{\bvec{u}(t)}^2
  + \rho\seminorm[H^{m+1}(\Th)]{p(t)}^2.
  \]    
  Squaring \eqref{eq:est.Euh.t}, multiplying by $\tau$, summing over $1\le n\le N$, and using a Cauchy--Schwarz inequality, we obtain
  \[
  \sum_{n=1}^N\tau\norm[\strain,h,*]{\mathcal{E}_{\bvec{U},h}^{\bullet,n}}^2
  \lesssim h^{2(m+1)}\sum_{n=1}^N\frac1\tau\left|
  \int_{t^{n-1}}^{t^n}\mathcal{N}_{\bvec{u},p}^\bullet(t)\ud t
  \right|^2
  \le h^{2(m+1)}\int_{0}^{\tF}|\mathcal{N}_{\bvec{u},p}^\bullet(t)|^2\ud t,
  \]
  from which \eqref{eq:est.mechanics.residual} follows recalling the definitions \eqref{eq:N.sp.u} of $\mathfrak{N}_{\bvec{u}}$ and \eqref{eq:N.sp.p} of $\mathfrak{N}_p^\bullet$.
  \medskip\\
  \underline{2. \emph{Estimate of the residual on the flow equation.}}
  For any $1\le n\le N$, we split the residual \eqref{eq:abs:EPhtaun} of the flow equation as follows:
  \[
  \mathcal{E}_{P,h\tau}^{\bullet,n} = \mathcal{E}_{P,h}^{\bullet,n} + \mathcal{E}_{P,\tau}^{\bullet,n},
  \]
  where, for all $q_h\in\Poly{k}(\Th)$,
  \begin{align}\label{eq:EPhn}
    (\mathcal{E}_{P,h}^{\bullet,n},q_h)_{L^2(\Omega)}
    &\coloneq
    C_0(I_\tau^n p,q_h)_{L^2(\Omega)}
    - C_0(\hat{p}_h^{\bullet,0},q_h)_{L^2(\Omega)}
    -\sum_{i=1}^n\tau C_0(\delta_t^i \hat{p}_{h\tau}^\bullet,q_h)_{L^2(\Omega)},
    \\ \label{eq:EPtaun}
    (\mathcal{E}_{P,\tau}^{\bullet,n},q_h)_{L^2(\Omega)}
    &\coloneq
    \sum_{i=1}^n\tau\Big[
      (\overline{g}^i +I_\tau^i(\DIV(\diff\GRAD p)),q_h)_{L^2(\Omega)}
      + \mathrm{b}_h(\delta_t^i \hat{\uvec{u}}_{h\tau},q_h)
      \Big]
    - C_0(I_\tau^n p - \hat{p}_h^{\bullet,0},q_h)_{L^2(\Omega)}.
  \end{align}
  We next proceed to estimate each component of the residual separately.
  \medskip\\
  \underline{2a. \emph{Estimate of $\mathcal{E}_{P,h}^{\bullet,n}$.}}
  We start by proving that
  \begin{equation}\label{eq:est.flow.residual.h}
    \boxed{
      \sum_{n=1}^N\tau\norm[L^2(\Omega)]{\mathcal{E}_{P,h}^{\bullet,n}}^2
      \lesssim C_0^2 h^{2(m+1)}\mathfrak{N}_p^\bullet,
    }
  \end{equation}
  showing that the error associated with $\mathcal{E}_{P,h}^{\bullet,n}$ is purely spatial.
  A discrete integration by parts in time in \eqref{eq:EPhn} gives, for all $q_h\in\Poly{k}(\Th)$,
  \[
  (\mathcal{E}_{P,h}^{\bullet,n},q_h)_{L^2(\Omega)}
  =C_0(I_\tau^n p,q_h)_{L^2(\Omega)}-C_0(\hat{p}_h^{\bullet,n},q_h)_{L^2(\Omega)} 
  =C_0(I_\tau^n (p-\mathfrak{p}_h^\bullet),q_h)_{L^2(\Omega)}.
  \]  
  Making $q_h=\mathcal{E}_{P,h}^{\bullet,n}$, using a Cauchy--Schwarz inequality in the right-hand side, simplifying, and squaring the resulting inequality, we obtain
  \begin{equation}\label{eq:est.normL2.EPh:basic}
    \begin{aligned}
      \norm[L^2(\Omega)]{\mathcal{E}_{P,h}^{\bullet,n}}^2
      &\le C_0^2\norm[L^2(\Omega)]{I_\tau^n (p-\mathfrak{p}_h^\bullet)}^2
      \\
      &= C_0^2\int_\Omega\left(
      \frac1\tau\int_{t^{n-1}}^{t^n} \left( p(\bvec{x},t)-\mathfrak{p}_h^\bullet(\bvec{x},t) \right) \ud t
      \right)^2\ud\bvec{x}
      \\
      &\le C_0^2\int_\Omega \frac1{\tau^2} \left(\int_{t^{n-1}}^{t^n} \ud t\right)
      \left( \int_{t^{n-1}}^{t^n} |p(\bvec{x},t)-\mathfrak{p}_h^\bullet(\bvec{x},t)|^2 \ud t \right) \ud \bvec{x}
      \\
      &= \frac{C_0^2}\tau\int_\Omega\int_{t^{n-1}}^{t^n} |p(\bvec{x},t)-\mathfrak{p}_h^\bullet(\bvec{x},t)|^2 \ud t \ud \bvec{x}
      = \frac{C_0^2}{\tau}\norm[L^2(t^{n-1},t^n;L^2(\Omega))]{p-\mathfrak{p}_h^\bullet}^2,
    \end{aligned}
  \end{equation}
  where we have used the definition \eqref{eq:I.tau.n} of $I_\tau^n$ in the second line and a Cauchy--Schwarz inequality in the third line.
  The rest of the proof proceeds using estimates that are method-specific.
  For the HHO-HHO scheme, using \eqref{eq:ch.hho.projection:err.est} with $r=p(t)$, we have
  \[
  \norm[L^2(\Omega)]{\mathcal{E}_{P,h}^{\hho,n}}^2
  \lesssim \frac{C_0^2\rho^2}{\tau}\int_{t^{n-1}}^{t^n}\left(
  \sum_{T\in\Th}\rho_Th_T^{2(m+1)}\seminorm[H^{m+2}(T)]{p(t)}^2 + h^{2(m+1)}\seminorm[H^{m+1}(\Th)]{p(t)}^2
  \right)\ud t.
  \]
  Multiplying this inequality by $\tau$ and summing over $n=1,\ldots,N$, we arrive at
  \[
  \begin{aligned}
    \sum_{n=1}^N\tau\norm[L^2(\Omega)]{\mathcal{E}_{P,h}^{\hho,n}}^2
    &\lesssim C_0^2\rho^2\sum_{n=1}^N\int_{t^{n-1}}^{t^n}\left(
    \sum_{T\in\Th}\rho_Th_T^{2(m+1)}\seminorm[H^{m+2}(T)]{p(t)}^2 + h^{2(m+1)}\seminorm[H^{m+1}(\Th)]{p(t)}^2
    \right)\ud t
    \\
    &\le C_0^2 h^{2(m+1)}\mathfrak{N}_p^\hho.
  \end{aligned}
  \]
  For the HHO-DG scheme, recalling \eqref{eq:ch.dg.projection:err.est}, we can continue from \eqref{eq:est.normL2.EPh:basic} writing
  \[
  \norm[L^2(\Omega)]{\mathcal{E}_{P,h}^{\dg,n}}^2
  \lesssim C_0^2 \rho \frac{h^{2(m+1)}}{\tau}\int_{t^{n-1}}^{t^n}\seminorm[H^{m+1}(\Th)]{p(t)}^2 \ud t.
  \]
  Squaring this inequality, multiplying by the time step $\tau$, and summing over $n=1,\ldots,N$, we obtain
  \[
  \sum_{n=1}^N\tau\norm[L^2(\Omega)]{\mathcal{E}_{P,h}^{\dg,n}}^2
  \lesssim C_0^2 \rho h^{2(m+1)}\sum_{n=1}^N\left(
  \int_{t^{n-1}}^{t^n}\seminorm[H^{m+1}(\Th)]{p(t)}^2\ud t
  \right)
  \le C_0^2 h^{2(m+1)}\mathfrak{N}_p^\dg.
  \]
  This concludes the proof of \eqref{eq:est.flow.residual.h}.
  \medskip\\
  \underline{2b. \emph{Estimate of $\mathcal{E}_{P,\tau}^{\bullet,n}$.}}
  We proceed by proving that
  \begin{equation}\label{eq:est.flow.residual.tau}
    \boxed{
      \sum_{n=1}^N\tau\norm[L^2(\Omega)]{\mathcal{E}_{P,\tau}^{\bullet,n}}^2
      \lesssim \tau^2\left(
      C_0\norm[H^1(0,\tF;L^2(\Omega))]{p} + \norm[H^1(0,\tF;L^2(\Omega))]{\DIV\bvec{u}}
      \right)^2,
    }
  \end{equation}
  showing that the error associated with $\mathcal{E}_{P,\tau}^{\bullet,n}$ is purely temporal.
First, we remark that, owing to definitions \eqref{eq:hat.uh} of $\hat{\uvec{u}}_h^0$, \eqref{eq:hat.ph.hho} (for the HHO-HHO scheme) or \eqref{eq:hat.ph.dg} (for the HHO-DG scheme) of $\hat{p}_h^{\bullet,0}$, using the consistency property \eqref{eq:bh:consistency:1} of 
  $\mathrm{b}_h$, and recalling the definition \eqref{eq:a.b.c} of $b$ and the initial condition \eqref{eq:weak_form.initial}, it holds, for all $q_h\in\Poly{k}(\Th)$,
  \begin{equation}\label{eq:est.Eptau.initial}
  C_0(\hat{p}_h^{\bullet,0},q_h)_{L^2(\Omega)}-\mathrm{b}_h(\hat{\uvec{u}}_h^0,q_h) =
  (C_0 p(0) + \DIV \bvec{u}(0), q_h)_{L^2(\Omega)}
  =(\phi^0,q_h)_{L^2(\Omega)}.
  \end{equation}
  Invoking the definitions \eqref{eq:EPtaun} of $\mathcal{E}_{P,\tau}^{\bullet,n}$, \eqref{eq:overline.fhn.ghn} of $\overline{g}^n$, and \eqref{eq:I.tau.n} of the time interpolator, telescoping out the appropriate quantities, and using \eqref{eq:est.Eptau.initial}, it is inferred that
  \begin{equation}\label{eq:est.EPtau:basic}
    \begin{aligned}
      (\mathcal{E}_{P,\tau}^{\bullet,n},q_h)_{L^2(\Omega)}
      &=\int_0^{t^n}\left(
        (g(t) + \DIV(\diff\GRAD p(t)),q_h)_{L^2(\Omega)}
        \right)\ud t
        + \mathrm{b}_h(\hat{\uvec{u}}_h^n,q_h) 
        - C_0(I_\tau^n p,q_h)_{L^2(\Omega)}  
        + (\phi^0, q_h)_{L^2(\Omega)}
      \\
      &= \int_0^{t^n}
      \big(\dt (C_0 p(t) + \DIV \bvec{u}(t)),q_h\big)_{L^2(\Omega)}\ud t
      -\big(I_\tau^n (C_0 p + \DIV\bvec{u}),q_h \big)_{L^2(\Omega)}
      + (\phi^0, q_h)_{L^2(\Omega)}
      \\
      &= \left( C_0 p(t^n) + \DIV \bvec{u}(t^n), q_h\right)_{L^2(\Omega)}
      -\left(I_\tau^n (C_0 p + \DIV\bvec{u}),q_h \right)_{L^2(\Omega)},
    \end{aligned}
  \end{equation}
  where we have substituted $g(t)$ and $\hat{\uvec{u}}_h^n$ according to \eqref{eq:biot:strong:flow} 
  and \eqref{eq:hat.uh}, respectively, and used \eqref{eq:bh:consistency:1} to pass to the second line, then we have 
  applied the fundamental theorem of calculus to the integral and used \eqref{eq:weak_form.initial} to conclude.
  Hence, making $q_h=\mathcal{E}_{P,\tau}^{\bullet,n}$ in \eqref{eq:est.EPtau:basic}, using a Cauchy--Schwarz inequality 
  in the right-hand side, simplifying, and squaring, we infer
  \begin{equation}\label{eq:EPtau.basic}
    \norm[L^2(\Omega)]{\mathcal{E}_{P,\tau}^{\bullet,n}}^2
    \le \norm[L^2(\Omega)]{C_0 p(t^n) + \DIV \bvec{u}(t^n) - I_\tau^n (C_0 p + \DIV\bvec{u})}^2
    =\norm[L^2(\Omega)]{\phi(t^n) - I_\tau^n \phi}^2,
  \end{equation}
  with $\phi\coloneq C_0 p + \DIV\bvec{u}$. We next observe that
  \[
  \begin{aligned}
    \phi(t^n) - I_\tau^n \phi
    &= \phi(t^n) - \frac1\tau\int_{t^{n-1}}^{t^n}\phi(t)\ud t
    \\
    &=
    \phi(t^n)
    - \frac{1}{\tau}\int_{t^{n-1}}^{t^n}\left(\phi(t^n) - \int_t^{t^n}\hspace{-1ex}\dt\phi(s)\ud s\right) \ud t
    \\
    &= \frac{1}{\tau}\int_{t^{n-1}}^{t^n}\int_t^{t^n}\hspace{-1ex}\dt\phi(s)\ud s\ud t
    \le\int_{t^{n-1}}^{t^n}|\dt\phi(t)|\ud t.
  \end{aligned}
  \]
  Combining this result with the Jensen inequality, we infer
  \begin{equation}\label{eq:EPtau.jensen}
    \norm[L^2(\Omega)]{\phi(t^n) - I_\tau^n \phi}^2
    \le\int_\Omega\left(
    \int_{t^{n-1}}^{t^n}|\dt\phi(\bvec{x},t)| \ud t
    \right)^2\ud\bvec{x}
    \le\tau\int_{t^{n-1}}^{t^n}\norm[L^2(\Omega)]{\dt\phi(t)}^2\ud t
    \le\tau\norm[H^1(t^{n-1},t^n;L^2(\Omega))]{\phi}^2.
  \end{equation}
  Plugging \eqref{eq:EPtau.jensen} into \eqref{eq:EPtau.basic}, multiplying by $\tau$, and summing over $1\le n\le N$, we can write
  \[
    \sum_{n=1}^N\tau\norm[L^2(\Omega)]{\mathcal{E}_{P,\tau}^{\bullet,n}}^2
    \le\tau^2\sum_{n=1}^N\norm[H^1(t^{n-1},t^n;L^2(\Omega))]{\phi}^2
    = \tau^2\norm[H^1(0,\tF;L^2(\Omega))]{\phi}^2,
    \]
    which yields \eqref{eq:est.flow.residual.tau} after observing that
    $\norm[H^1(0,\tF;L^2(\Omega))]{\phi}\le C_0\norm[H^1(0,\tF;L^2(\Omega))]{p} + \norm[H^1(0,\tF;L^2(\Omega))]{\DIV\bvec{u}}$.
    \medskip\\
  \underline{3. \emph{Proof of the error estimates}.}
  Since the discretisations introduced in Section \ref{sec:hho} satisfy Assumptions \ref{ass:discrete.setting} and \ref{ass:stability} and the interpolates 
  defined by \eqref{eq:hat.uh}, \eqref{eq:hat.ph.hho} (for the HHO-HHO scheme), and \eqref{eq:hat.ph.dg} (for the HHO-DG scheme) satisfy  Assumption \ref{ass:interpolate}, estimate \eqref{eq:abs:err.est} holds with $\norm[\bvec{\abs{U}},h,*]{\mathcal{E}_{\bvec{\abs{U}},h}^n}$ and $\norm[L^2(\Omega)]{\mathcal{E}_{\abs{P},h\tau}^n}$ replaced by $\norm[\strain,h,*]{\mathcal{E}_{\bvec{U},h}^{\bullet,n}}$ and $\norm[L^2(\Omega)]{\mathcal{E}_{P,h\tau}^{\bullet,n}}$, respectively. 
  Moreover, owing to \eqref{eq:est.Eptau.initial}, we have for both schemes
  \begin{equation}\label{eq:est.E0h}
    (\mathcal{E}_{0,h}^\bullet,q_h)_{L^2(\Omega)}\coloneq
    (\phi^0,q_h)_{L^2(\Omega)}- C_0(\hat{p}_h^{\bullet,0},q_h)_{L^2(\Omega)} + \mathrm{b}_h(\hat{\uvec{u}}_h^0,q_h)
    = 0\qquad\forall q_h\in\Poly{k}(\Th).
  \end{equation}
  Hence, it only remains to bound the fourth term in the right-hand side of \eqref{eq:abs:err.est}, namely
  $$
  \term\coloneq 
  \mathcal{C}_3 \sum_{n=1}^N \tau \norm[L^2(\Omega)]{\lproj[\Omega]{0}\mathcal{E}_{P,h\tau}^{\bullet,n}}^2
  =\frac4{C_0} \sum_{n=1}^N \tau 
  \norm[L^2(\Omega)]{
    \lproj[\Omega]{0}\mathcal{E}_{P,h}^{\bullet,n}
    + \lproj[\Omega]{0}\mathcal{E}_{P,\tau}^{\bullet,n}
  }^2,
  $$
  in such a way that $\term=0$ when $C_0=0$.
  Proceeding as in \eqref{eq:est.EPtau:basic} and using $\lproj[\Omega]{0}(\DIV\bvec{u}(t)) = 0$ for all $t\in(0,\tF]$ (combine the Stokes theorem with the clamped boundary condition \eqref{eq:biot:strong:bc.u}), it is inferred that, for all $1\le n \le N$ and $q_h\in\Poly{k}(\Th)$,
  $$
  (\lproj[\Omega]{0}\mathcal{E}_{P,\tau}^{\bullet,n},q_h)_{L^2(\Omega)} =
  (\mathcal{E}_{P,\tau}^{\bullet,n},\lproj[\Omega]{0}q_h)_{L^2(\Omega)} =
  C_0 ( p(t^n) - I_\tau^n p, \lproj[\Omega]{0} q_h)_{L^2(\Omega)}.
  $$
  Therefore, following the arguments used in Point 3. above, we obtain
  \begin{equation}\label{eq:est.Ep.tau_av}
    \sum_{n=1}^N\tau\norm[L^2(\Omega)]{\lproj[\Omega]{0}\mathcal{E}_{P,\tau}^n}^2
    \le \tau^2 C_0^2\norm[H^1(0,\tF;L^2(\Omega))]{p}^2.
  \end{equation}
  Using the triangle inequality, the boundedness of $\lproj[\Omega]{0}$, and estimates \eqref{eq:est.flow.residual.h} and
  \eqref{eq:est.Ep.tau_av}, leads to
  \begin{equation}\label{eq:est.Ep_av}
    \term\lesssim
    C_0^{-1}\left(
    \sum_{n=1}^N\tau\norm[L^2(\Omega)]{\lproj[\Omega]{0}\mathcal{E}_{P,h}^{\bullet,n}}^2
    +\sum_{n=1}^N\tau\norm[L^2(\Omega)]{\lproj[\Omega]{0}\mathcal{E}_{P,\tau}^{\bullet,n}}^2
    \right)
    \le 
    C_0 \left( h^{2(m+1)}\mathfrak{N}_p^\bullet
    +\tau^2\norm[H^1(0,\tF;L^2(\Omega))]{p}^2 \right). 
  \end{equation}
  It follows from the previous bound that $\term=0$ in the case $C_0=0$.
  Plugging \eqref{eq:est.mechanics.residual}, \eqref{eq:est.flow.residual.h}, \eqref{eq:est.flow.residual.tau},  \eqref{eq:est.E0h}, and \eqref{eq:est.Ep_av} into \eqref{eq:abs:err.est} yields, respectively, \eqref{eq:err.est_hho} for the HHO-HHO scheme and \eqref{eq:err.est_dg} for the HHO-DG scheme.
\end{proof}


\section{Numerical examples}\label{sec:numerical.examples}

In order to confirm the convergence rates predicted in Theorem \ref{thm:error.estimate}, we rely on a manufactured regular solution of a two-dimensional incompressible Biot problem ($C_0=0$) with physical parameters $\mu=1$, $\lambda=1$, and $\diff = \kappa \Id$ with $\kappa \in\{ 1, 10^{-6}\}$.
The exact displacement $\bvec{u}$ and exact pressure $p$ are given by, respectively,
\[
\bvec{u}(\bvec{x},t)
= \begin{pmatrix}
  -\sin(\pi t)\cos(\pi x_1)\cos(\pi x_2) \\ \sin(\pi t)\sin(\pi x_1)\sin(\pi x_2)
\end{pmatrix},
\qquad
p(\bvec{x},t)
= - \cos(\pi t)\sin(\pi x_1)\cos(\pi x_2).
\]
The corresponding volumetric load and source terms are
\[
\bvec{f}(\bvec{x},t)
= (6 \pi^2\sin(\pi t)+ \pi \cos(\pi t))\begin{pmatrix}
  -\cos(\pi x_1)\cos(\pi x_2)\\ \sin(\pi x_1)\sin(\pi x_2)
\end{pmatrix},\quad
g(\bvec{x},t) =  2 (1-\kappa) \pi^2 \cos(\pi t) \sin(\pi x_1) \cos(\pi x_2). 
\]
We consider HHO-HHO and HHO-DG dicretisations of degree $k\in\{1,2,3\}$ over a trapezoidal elements mesh sequence of the unit square $\Omega=(0,1)^{2}$.
The time discretisation is based on Backward Differentiation Formulas (BDF) of order $(k+1)$
and time integration is carried out over the interval $\lbrack 0,\tF=1)$ with a fixed time step $\tau=10^{-3}$.  
The domain boundary $\partial\Omega$ is split into two halves where Dirichlet--Neumann and Neumann--Dirichlet boundary conditions for the displacement-pressure couple are imposed, respectively, according to the exact solution. 
Initial conditions are specified by means of $L^2$-projections over mesh elements and mesh faces. Initialization is performed at several time points ($t_i=-\tau \, i, \; i=1,...,k+1$), in agreement with the BDF order.   

The convergence rates for $\kappa=1$ and $\kappa=10^{-6}$ are reported in Tables \ref{tab:unitdiff} and \ref{tab:smalldiff}, respectively. 
Various norms of the errors are reported in Tables \ref{tab:unitdiff} and \ref{tab:smalldiff} with the convention that, for a family $x_{h\tau}=(x_h^n)_{1\le n\le N}\in X_h^N$ with $X_h$ generic vector space, we let
$$
\norm[2,\star]{x_{h\tau}}\coloneq \left(\sum_{n=1}^N \tau\norm[\star]{x_h^n}^2 \right)^{\frac12},
$$
where $\norm[\star]{{\cdot}}$ denotes a norm on $X_h$.
Each error measure is accompanied by the corresponding estimated order of convergence (EOC).
For the sake of completeness, in the first two columns of the tables, we report the size of the linear system 
(denoted by ${\rm \#dofs}$) and the number of non-zero entries in the matrix (denoted by ${\rm \#nz}$).
The numerical results confirm asymptotic convergence rates that are in agreement with theoretical predictions. 
It is interesting to remark that HHO-HHO discretisations provide convergence rates of $(k+2)$ and $(k+1)$ for the pressure error when $\kappa{=}1$ and $\kappa{=}10^{-6}$, respectively, whereas HHO-DG formulations does not surpass $(k+1)$ in case of unit permeabilities. 
Similarly, the $L^2(0,\tF; L^2(\Omega)^d)$-norm of the displacement error shows asymptotic convergence rates of $(k+2)$ for all but the unit permeability $k=1$ HHO-DG formulation, where the convergence rate tops at $(k+1)$.
This behaviour will be investigated in future works.

\begin{table}
  \centering
  \begin{small}
    \begin{tabular}{lccccccccc}
      \toprule
      discr &  k & ${\rm \#dofs}$      & ${\rm \#nz}$      & $\norm[2,\strain,h]{\uvec{e}_{h\tau}}$    & EOC    & 
      $\norm[2,L^2(\Omega)^d]{\bvec{e}_{h\tau}}$    & EOC    & $\norm[2,L^2(\Omega)]{\epsilon_{h\tau}}$  & EOC   \\

      \midrule
      \multirow{5}{*}{\textbf{HHO-DG}} & 
      \multirow{5}{*}{\textbf{1}} & 
      208        & 5824       & 6.69e-03   & --         & 6.82e-04   & --         & 1.13e-03   & --          \\ 
      & & 768        & 23328      & 1.71e-03   & 1.97       & 7.39e-05   & 3.21       & 3.23e-04   & 1.80        \\ 
      & & 2944       & 93376      & 4.03e-04   & 2.08       & 9.97e-06   & 2.89       & 7.49e-05   & 2.11        \\ 
      & & 11520      & 373632     & 9.82e-05   & 2.04       & 1.81e-06   & 2.46       & 1.90e-05   & 1.98        \\ 
      & & 45568      & 1494784    & 2.45e-05   & 2.01       & 3.95e-07   & 2.19       & 4.68e-06   & 2.02        \\ 

      \midrule
      \multirow{5}{*}{\textbf{HHO-HHO}} &
      \multirow{5}{*}{\textbf{1}} &

      240        & 8352       & 6.69e-03   & --         & 6.81e-04   & --         & 4.50e-04   & --          \\ 
      & & 864        & 32832      & 1.70e-03   & 1.97       & 7.06e-05   & 3.27       & 5.68e-05   & 2.99        \\ 
      & & 3264       & 130176     & 4.03e-04   & 2.08       & 8.22e-06   & 3.10       & 6.66e-06   & 3.09        \\ 
      & & 12672      & 518400     & 9.80e-05   & 2.04       & 1.02e-06   & 3.01       & 8.04e-07   & 3.05        \\ 
      & & 49920      & 2068992    & 2.44e-05   & 2.01       & 1.29e-07   & 2.98       & 6.54e-08   & 3.62        \\ 

      \midrule
      \multirow{5}{*}{\textbf{HHO-DG}} & 
      \multirow{5}{*}{\textbf{2}} & 

      336        & 15264      & 8.98e-04   & --         & 4.90e-05   & --         & 1.20e-04   & --          \\ 
      & & 1248       & 61632      & 1.11e-04   & 3.02       & 2.59e-06   & 4.24       & 1.40e-05   & 3.11        \\ 
      & & 4800       & 247680     & 1.44e-05   & 2.94       & 1.75e-07   & 3.89       & 1.82e-06   & 2.94        \\ 
      & & 18816      & 993024     & 1.81e-06   & 2.99       & 1.11e-08   & 3.98       & 2.31e-07   & 2.98        \\ 
      & & 74496      & 3976704    & 2.25e-07   & 3.01       & 7.55e-10   & 3.87       & 2.88e-08   & 3.00        \\ 

      \midrule
      \multirow{5}{*}{\textbf{HHO-HHO}} &
      \multirow{5}{*}{\textbf{2}} &

      360        & 18792      & 8.98e-04   & --         & 4.89e-05   & --         & 2.71e-05   & --          \\ 
      & & 1296       & 73872      & 1.11e-04   & 3.02       & 2.59e-06   & 4.24       & 1.14e-06   & 4.57        \\ 
      & & 4896       & 292896     & 1.44e-05   & 2.94       & 1.74e-07   & 3.89       & 6.37e-08   & 4.16        \\ 
      & & 19008      & 1166400    & 1.81e-06   & 2.99       & 1.10e-08   & 3.98       & 4.15e-09   & 3.94        \\ 
      & & 74880      & 4655232    & 2.25e-07   & 3.01       & 7.52e-10   & 3.87       & 2.95e-10   & 3.82        \\ 

      \midrule
      \multirow{5}{*}{\textbf{HHO-DG}} & 
      \multirow{5}{*}{\textbf{3}} & 

      480        & 31488      & 9.69e-05   & --         & 6.00e-06   & --         & 1.09e-05   & --          \\ 
      & & 1792       & 128128     & 6.63e-06   & 3.87       & 1.27e-07   & 5.57       & 8.30e-07   & 3.72        \\ 
      & & 6912       & 516864     & 3.77e-07   & 4.14       & 3.48e-09   & 5.19       & 4.86e-08   & 4.09        \\ 
      & & 27136      & 2076160    & 2.42e-08   & 3.96       & 1.12e-10   & 4.96       & 3.15e-09   & 3.95        \\ 
      & & 107520     & 8322048    & 1.49e-09   & 4.02       & 3.44e-12   & 5.02       & 1.96e-10   & 4.01        \\ 

      \midrule
      \multirow{5}{*}{\textbf{HHO-HHO}} &
      \multirow{5}{*}{\textbf{3}} &

      480        & 33408      & 9.69e-05   & --         & 5.99e-06   & --         & 2.08e-06   & --          \\ 
      & & 1728       & 131328     & 6.63e-06   & 3.87       & 1.26e-07   & 5.57       & 6.60e-08   & 4.98        \\ 
      & & 6528       & 520704     & 3.77e-07   & 4.14       & 3.48e-09   & 5.19       & 2.41e-09   & 4.78        \\ 
      & & 25344      & 2073600    & 2.42e-08   & 3.96       & 1.12e-10   & 4.96       & 8.15e-11   & 4.88        \\ 
      & & 99840      & 8275968    & 1.49e-09   & 4.02       & 3.44e-12   & 5.02       & 2.43e-12   & 5.07        \\ 
      \bottomrule
    \end{tabular}
  \end{small}
  \caption{\label{tab:unitdiff} Convergence rates for HHO-HHO and HHO-DG discretisations based on
    manufactured solutions of the Biot problem, see text for details. Relevant parameters are $\mu = \lambda = 1$ and  
    $\diff = \kappa \Id$, with $\kappa = 1$.}
\end{table}

\begin{table}
  \centering
  \begin{small}
    \begin{tabular}{lccccccccc}
      \toprule
      discr &  k & ${\rm \#dofs}$     & ${\rm \#nz}$      & $\norm[2,\strain,h]{\uvec{e}_{h\tau}}$    & EOC    & 
      $\norm[2,L^2(\Omega)^d]{\bvec{e}_{h\tau}}$    & EOC    & $\norm[2,L^2(\Omega)]{\epsilon_{h\tau}}$  & EOC   \\

      \midrule
      \multirow{5}{*}{\textbf{HHO-DG}} & 
      \multirow{5}{*}{\textbf{1}} & 

      208        & 5824       & 6.83e-03   & --         & 5.11e-04   & --         & 7.52e-03   & --           \\ 
      & & 768        & 23328      & 1.70e-03   & 2.00       & 5.05e-05   & 3.34       & 1.49e-03   & 2.33         \\ 
      & & 2944       & 93376      & 4.02e-04   & 2.08       & 6.00e-06   & 3.07       & 2.75e-04   & 2.44         \\ 
      & & 11520      & 373632     & 9.74e-05   & 2.05       & 7.02e-07   & 3.10       & 5.70e-05   & 2.27         \\ 
      & & 45568      & 1494784    & 2.42e-05   & 2.01       & 1.07e-07   & 2.72       & 1.24e-05   & 2.19         \\ 

      \midrule
      \multirow{5}{*}{\textbf{HHO-HHO}} &
      \multirow{5}{*}{\textbf{1}} &

      240        & 8352       & 6.83e-03   & --         & 5.11e-04   & --         & 5.25e-03   & --           \\ 
      & & 864        & 32832      & 1.70e-03   & 2.00       & 5.05e-05   & 3.34       & 9.26e-04   & 2.50         \\ 
      & & 3264       & 130176     & 4.02e-04   & 2.08       & 6.01e-06   & 3.07       & 1.70e-04   & 2.44         \\ 
      & & 12672      & 518400     & 9.74e-05   & 2.05       & 7.02e-07   & 3.10       & 3.69e-05   & 2.21         \\ 
      & & 49920      & 2068992    & 2.42e-05   & 2.01       & 1.07e-07   & 2.71       & 8.38e-06   & 2.14         \\ 

      \midrule
      \multirow{5}{*}{\textbf{HHO-DG}} &
      \multirow{5}{*}{\textbf{2}} &

      336        & 15264      & 9.60e-04   & --         & 4.76e-05   & --         & 1.24e-03   & --           \\ 
      &  & 1248       & 61632      & 1.13e-04   & 3.09       & 2.37e-06   & 4.33       & 1.12e-04   & 3.47         \\ 
      &  & 4800       & 247680     & 1.46e-05   & 2.95       & 1.54e-07   & 3.94       & 1.18e-05   & 3.24         \\ 
      &  & 18816      & 993024     & 1.81e-06   & 3.01       & 9.91e-09   & 3.96       & 1.23e-06   & 3.27         \\ 
      &  & 74496      & 3976704    & 2.26e-07   & 3.00       & 7.49e-10   & 3.73       & 1.21e-07   & 3.34         \\ 

      \midrule
      \multirow{5}{*}{\textbf{HHO-HHO}} &
      \multirow{5}{*}{\textbf{2}} &

      360        & 18792      & 9.60e-04   & --         & 4.76e-05   & --         & 7.92e-04   & --           \\ 
      & & 1296       & 73872      & 1.13e-04   & 3.09       & 2.37e-06   & 4.33       & 6.89e-05   & 3.52         \\ 
      & & 4896       & 292896     & 1.46e-05   & 2.95       & 1.54e-07   & 3.95       & 6.92e-06   & 3.32         \\ 
      & & 19008      & 1166400    & 1.81e-06   & 3.01       & 9.87e-09   & 3.96       & 6.94e-07   & 3.32         \\ 
      & & 74880      & 4655232    & 2.25e-07   & 3.01       & 7.43e-10   & 3.73       & 7.12e-08   & 3.29         \\ 

      \midrule
      \multirow{5}{*}{\textbf{HHO-DG}} & 
      \multirow{5}{*}{\textbf{3}} & 

      480        & 31488      & 1.07e-04   & --         & 4.34e-06   & --         & 1.62e-04   & --           \\ 
      & & 1792       & 128128     & 6.81e-06   & 3.97       & 1.03e-07   & 5.40       & 7.27e-06   & 4.48         \\ 
      & & 6912       & 516864     & 3.84e-07   & 4.15       & 2.90e-09   & 5.15       & 3.46e-07   & 4.39         \\ 
      & & 27136      & 2076160    & 2.42e-08   & 3.99       & 8.87e-11   & 5.03       & 1.84e-08   & 4.23         \\ 
      & & 107520     & 8322048    & 1.49e-09   & 4.02       & 2.69e-12   & 5.04       & 8.68e-10   & 4.40         \\ 

      \midrule
      \multirow{5}{*}{\textbf{HHO-HHO}} &
      \multirow{5}{*}{\textbf{3}} &

      480        & 33408      & 1.07e-04   & --         & 4.34e-06   & --         & 1.22e-04   & --           \\ 
      & & 1728       & 131328     & 6.82e-06   & 3.97       & 1.03e-07   & 5.40       & 4.51e-06   & 4.75         \\ 
      & & 6528       & 520704     & 3.84e-07   & 4.15       & 2.90e-09   & 5.15       & 2.05e-07   & 4.46         \\ 
      & & 25344      & 2073600    & 2.43e-08   & 3.98       & 8.83e-11   & 5.04       & 1.08e-08   & 4.25         \\ 
      & & 99840      & 8275968    & 1.50e-09   & 4.02       & 2.68e-12   & 5.04       & 5.21e-10   & 4.38         \\ 
      \bottomrule
    \end{tabular}
  \end{small}
  \caption{\label{tab:smalldiff} Convergence rates for HHO-HHO and HHO-DG discretisations based on
    manufactured solutions of the Biot problem, see text for details. Relevant parameters are $\mu = \lambda = 1$ and  
    $\diff = \kappa \Id$, with $\kappa = 10^{-6}$.}
\end{table}


\bibliographystyle{plain}
\bibliography{abbho}

\begin{thebibliography}{10}

\bibitem{Auriault.Sanchez-Palencia:77}
J.-L. Auriault and E.~Sanchez-Palencia.
\newblock \'etude du comportement macroscopique d'un milieu poreux satur\'e
  d\'eformable.
\newblock {\em J. M\'ec.}, 16(4):575--603, 1977.

\bibitem{Biot:41}
M.~A. Biot.
\newblock General theory of threedimensional consolidation.
\newblock {\em J. Appl. Phys.}, 12(2):155--164, 1941.

\bibitem{Boffi.Botti.ea:16}
D.~Boffi, M.~Botti, and D.~A. Di~Pietro.
\newblock A nonconforming high-order method for the {Biot} problem on general
  meshes.
\newblock {\em SIAM J. Sci. Comput.}, 38(3):A1508--A1537, 2016.

\bibitem{Boffi.Brezzi.ea:13}
D.~Boffi, F.~Brezzi, and M.~Fortin.
\newblock {\em Mixed finite element methods and applications}, volume~44 of
  {\em Springer Series in Computational Mathematics}.
\newblock Springer, Heidelberg, 2013.

\bibitem{Botti.Di-Pietro.ea:19*3}
M.~Botti, D.~A. Di~Pietro, and A.~Guglielmana.
\newblock A low-order nonconforming method for linear elasticity on general
  meshes.
\newblock {\em Comput. Meth. Appl. Mech. Engrg.}, 354:96--118, 2019.

\bibitem{Botti.Di-Pietro.ea:20}
M.~Botti, D.~A. Di~Pietro, O.~Le~Ma\^{i}tre, and P.~Sochala.
\newblock Numerical approximation of poroelasticity with random coefficients
  using polynomial chaos and high-order polyhedral methods.
\newblock {\em Comput. Meth. Appl. Mech. Engrg.}, 2020.
\newblock Accepted for publication.

\bibitem{Botti.Di-Pietro.ea:17}
M.~Botti, D.~A. Di~Pietro, and P.~Sochala.
\newblock A {Hybrid High-Order} method for nonlinear elasticity.
\newblock {\em SIAM J. Numer. Anal.}, 55(6):2687--2717, 2017.

\bibitem{Botti.Di-Pietro.ea:19}
M.~Botti, D.~A. Di~Pietro, and P.~Sochala.
\newblock A {Hybrid High-Order} discretization method for nonlinear
  poroelasticity.
\newblock {\em Comput. Meth. Appl. Math.}, 2019.
\newblock Published online.

\bibitem{Bowen:80}
R.~M. Bowen.
\newblock Incompressible porous media models by use of the theory of mixtures.
\newblock {\em Int. J. Eng. Sci.}, 18(9):1129--1148, 1980.

\bibitem{Bowen:82}
R.~M. Bowen.
\newblock Compressible porous media models by use of the theory of mixtures.
\newblock {\em Int. J. Eng. Sci.}, 20(6):697--735, 1982.

\bibitem{Burridge.Keller:81}
R.~Burridge and J.~B. Keller.
\newblock Poroelasticity equations derived from microstructure.
\newblock {\em J. Acoust. Soc. Am.}, 70(4):1140--1146, 1981.

\bibitem{Coussy:04}
O.~Coussy.
\newblock {\em Poromechanics}.
\newblock J. Wiley and Sons, ltd., 2004.

\bibitem{Dauge:89}
M.~Dauge.
\newblock Stationary {S}tokes and {N}avier-{S}tokes systems on two- or
  three-dimensional domains with corners. {I}. {L}inearized equations.
\newblock {\em SIAM J. Math. Anal.}, 20(1):74--97, 1989.

\bibitem{Di-Pietro.Droniou:17}
D.~A. Di~Pietro and J.~Droniou.
\newblock A {Hybrid High-Order} method for {Leray--Lions} elliptic equations on
  general meshes.
\newblock {\em Math. Comp.}, 86(307):2159--2191, 2017.

\bibitem{Di-Pietro.Droniou:18}
D.~A. Di~Pietro and J.~Droniou.
\newblock A third {Strang} lemma for schemes in fully discrete formulation.
\newblock {\em Calcolo}, 55(40), 2018.

\bibitem{Di-Pietro.Droniou:18*1}
D.~A. Di~Pietro and J.~Droniou.
\newblock A third {Strang} lemma for schemes in fully discrete formulation, 4
  2018.

\bibitem{Di-Pietro.Droniou:20}
D.~A. Di~Pietro and J.~Droniou.
\newblock {\em The {Hybrid High-Order} method for {P}olytopal {M}eshes}.
\newblock Number~19 in Modeling, Simulation and Application. Springer
  International Publishing, 2020.

\bibitem{Di-Pietro.Ern.ea:16}
D.~A. Di~Pietro, A.~Ern, and S.~Lemaire.
\newblock {\em Building bridges: Connections and challenges in modern
  approaches to numerical partial differential equations}, chapter A review of
  Hybrid High-Order methods: formulations, computational aspects, comparison
  with other methods.
\newblock Springer, 2016.
\newblock No 114 in Lecture Notes in Computational Science and Engineering.

\bibitem{Di-Pietro.Ern.ea:16*1}
D.~A. Di~Pietro, A.~Ern, A.~Linke, and F.~Schieweck.
\newblock A discontinuous skeletal method for the viscosity-dependent {Stokes}
  problem.
\newblock {\em Comput. Meth. Appl. Mech. Engrg.}, 306:175--195, 2016.

\bibitem{Fortin:77}
M.~Fortin.
\newblock An analysis of the convergence of mixed finite element methods.
\newblock {\em RAIRO Anal. Num\'{e}r.}, 11(4):341--354, iii, 1977.

\bibitem{Girault.Raviart:86}
V.~Girault and P.-A. Raviart.
\newblock {\em Finite element methods for {N}avier-{S}tokes equations},
  volume~5 of {\em Springer Series in Computational Mathematics}.
\newblock Springer-Verlag, Berlin, 1986.
\newblock Theory and algorithms.

\bibitem{Grisvard:85}
P.~Grisvard.
\newblock {\em Elliptic Problems in Nonsmooth Domains}.
\newblock Pitman, London, 1985.

\bibitem{Ladyzhenskaya:69}
O.~M. Ladyzhenskaya.
\newblock {\em The mathematical theory of viscous incomprehensible flow}.
\newblock Gordon Breach Science Publishers, New York, 1969.

\bibitem{Lewis.Schrefler:98}
R.~W. Lewis and B.~A. Schrefler.
\newblock {\em The finite element method in the static and dynamic deformation
  and consolidation of porous media}.
\newblock Numerical methods in engineering. John Wiley, 1998.

\bibitem{Naumovich:06}
A.~Naumovich.
\newblock On finite volume discretization of the three-dimensional {Biot}
  poroelasticity system in multilayer domains.
\newblock {\em Comput. Meth. App. Math.}, 6(3):306--325, 2006.

\bibitem{Nordbotten:16}
J.~M. Nordbotten.
\newblock Stable cell-centered finite volume discretization for {B}iot
  equations.
\newblock {\em SIAM J. Numer. Anal.}, 54(2):942--968, 2016.

\bibitem{Phillips.Wheeler:07}
P.~J. Phillips and M.~F. Wheeler.
\newblock A coupling of mixed and continuous {Galerkin} finite element methods
  for poroelasticity {I}: the continuous in time case.
\newblock {\em Comput. Geosci.}, 11:131--144, 2007.

\bibitem{Phillips.Wheeler:07*1}
P.~J. Phillips and M.~F. Wheeler.
\newblock A coupling of mixed and continuous {Galerkin} finite element methods
  for poroelasticity {II}: the discrete-in-time case.
\newblock {\em Comput. Geosci.}, 11:145--158, 2007.

\bibitem{Phillips.Wheeler:08}
P.~J. Phillips and M.~F. Wheeler.
\newblock A coupling of mixed and discontinuous {Galerkin} finite-element
  methods for poroelasticity.
\newblock {\em Comput. Geosci.}, 12:417--435, 2008.

\bibitem{Reshetnyak:70}
Y.~G. Reshetnyak.
\newblock Estimates for certain differential operators with finite-dimensional
  kernel.
\newblock {\em Sib. Math. J.}, 11(2):315--326, 1970.

\bibitem{Schirra:12}
O.~D. Schirra.
\newblock New korn-type inequalities and regularity of solutions to linear
  elliptic systems and anisotropic variational problems involving the
  trace-free part of the symmetric gradient.
\newblock {\em Calc. Var. Partial Dif.}, 43(1):147--172, 2012.

\bibitem{Terzaghi:23}
K.~Terzaghi.
\newblock Die {B}erechnung der {D}urchlassigkeitszier des {T}ones aus dem
  {V}erlauf der hydrodynemischen {S}pannungserscheinungen.
\newblock {\em Sitz. Akad. Wissen.}, 132(2a):105--124, 1923.

\bibitem{Terzaghi:43}
K.~Terzaghi.
\newblock {\em Theoretical soil mechanics}.
\newblock Wiley, New York, 1943.

\bibitem{Wheeler.Xue.ea:14}
M.~F. Wheeler, G.~Xue, and I.~Yotov.
\newblock Coupling multipoint flux mixed finite element methods with continuous
  {Galerkin} methods for poroelasticity.
\newblock {\em Comput. Geosci.}, 18:57--75, 2014.

\end{thebibliography}

\end{document}